\documentclass[oneside,reqno]{amsart}
\usepackage{preamble}

\zcsetup{sort=false, rangesep = {~through\nobreakspace}}
\title{Hyperfiniteness of bounded-to-one actions of commutative monoids}
\author{Forte Shinko, Felix Weilacher, Jing Yu}

\begin{document}

\begin{abstract}
  A theorem of Dougherty--Jackson--Kechris states that any equivalence relation generated by a single Borel function is hypersmooth. 
  A well-known open problem is whether this can be generalized to equivalence relations generated by countable families of pairwise commuting Borel functions. 
  We give an affirmative answer in the case where the functions are bounded-to-one.
  This generalizes the theorem of Gao--Jackson on Borel actions of countable abelian groups. 
\end{abstract}

\maketitle

\tableofcontents

\section{Introduction}

The theory of Borel equivalence relations provides a framework for
studying the complexity of classification problems arising throughout mathematics.
Central to this study are the \emphd{countable} Borel equivalence relations (CBERs),
those whose classes are all countable. 

Among the most important properties in the study of Borel equivalence relations is \emphd{hypersmoothness},
which for CBERs is equivalent to \emphd{hyperfiniteness}.
These say that the relation in question can be written
as a countable increasing union of smooth and finite-class Borel equivalence relations,
respectively. 
Understanding when naturally occurring countable Borel equivalence relations are hyperfinite
has been one of the main themes of the subject.

A fundamental source of examples is provided by group actions.
If \(G\curvearrowright X\) is a Borel action of a countable group on a standard Borel space,
then the associated orbit equivalence relation
\[
  x \mathrel{E^X_G} y
  \iff
  \exists g\in G,\; gx=y
\]
is a countable Borel equivalence relation. 
In fact,
by a theorem of Feldman and Moore,
every CBER arises this way \cite{feldman.moore}.
Motivated by the Ornstein--Weiss theorem in ergodic theory \cite{ornstein_weiss},
Weiss asked the following. 
\begin{qn}[Weiss {\cite{weiss1984}}]
  Is every orbit equivalence relation of a countable amenable group hyperfinite?
\end{qn}

Weiss showed that this holds for actions of $\Z^d$, $d \in \N$. 
Following his work,
the question has been answered positively for several large classes of amenable groups:

\begin{thm}\label{thm:weiss_progress}
  Let $G$ be a countable group and $G \car X$ a Borel action on a standard Borel space.
  Then $E_G^X$ is hyperfinite provided one of the following:
  \begin{enumerate}[label=(\arabic*)]
    \item
      \textnormal{(Jackson--Kechris--Louveau \cite{jkl})}
      $G$ has polynomial growth.
    \item
      \label{wpgj}
      \textnormal{(Gao--Jackson \cite{GJ15})}
      $G$ is abelian.
    \item
      \label{polycyclic}
      \textnormal{(\jacksonfive\ \cite{CJMST23})}
      $G$ is polycyclic.
  \end{enumerate}
\end{thm}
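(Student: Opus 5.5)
This theorem collects three results from the literature (Jackson--Kechris--Louveau, Gao--Jackson, and Conley--Jackson--Marks--Seward--Tucker-Drob), so I would not reprove them from scratch. I would cite the three papers and indicate the common strategy. All three share one template. Fix a Borel action $G \car X$. Build Borel \emph{marker structures}: a decreasing sequence of Borel complete sections $M_1 \supseteq M_2 \supseteq \cdots$ that are increasingly sparse with respect to a word metric on $G$. From these, build Borel partitions of $X$ into finite regions $R_n$ of controlled geometry. Let $F_n$ be the equivalence relation ``lying in the same region of stage $n$''. Then each $F_n$ is a finite Borel equivalence relation. If the regions can be arranged so that $F_n \subseteq F_{n+1}$ and $\bigcup_n F_n = E^X_G$, then $E^X_G$ is hyperfinite.

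For item (1), I would first use Gromov's theorem to reduce to $G$ virtually nilpotent. Polynomial growth gives a uniform doubling bound on balls. With it, maximal $r_n$-discrete Borel sets (which exist by the standard Kechris--Solecki--Todorcevic argument) yield Voronoi-type regions of bounded diameter. Choosing the radii $r_n$ rapidly increasing then gives a finite Borel equivalence relation at each stage. Growth control bounds how many boundaries a given point can lie near, so almost every point eventually stabilizes. One then modifies by a shift to obtain a genuinely increasing union: the JKL approach goes through showing $E_G$ is a countable increasing union with the \emph{boundary-meeting} points handled via a hypersmoothness argument.

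For item (2), abelian $G$ is a directed union of finitely generated abelian groups $\Z^d \times$ finite. Hyperfiniteness is not closed under increasing unions in general, so the real content is Gao--Jackson's construction for $\Z^d$. They build rectangular clopen-style marker regions that are \emph{orthogonal}: at stage $n+1$ the rectangle boundaries are moved so they avoid the stage-$n$ boundaries. This ensures every pair of points is eventually in a common region, and the stages are made to work uniformly across the directed system. I expect this orthogonality, boundary-adjustment step to be the main obstacle: showing, in a Borel and uniform way, that the new faces can be shifted away from all earlier faces while keeping the regions of bounded size.

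For item (3), polycyclic groups have a subnormal series with cyclic factors. CJMST run an induction along this series, using the asymptotic-dimension / Borel-asymptotic-dimension machinery: finite Borel asymptotic dimension implies hyperfiniteness, and the dimension bound passes through extensions by $\Z$. Overall, the proof of the theorem consists of citing these three results, with the sketches above recording the mechanism in each case.
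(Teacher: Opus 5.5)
Citing the three results is exactly what the paper does. It states the theorem as background with attributions and gives no proof, so as a proof your proposal is fine.

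Some of your commentary is wrong, though, and one remark contradicts the paper directly.

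\textbf{The union problem.} You say hyperfiniteness ``is not closed under increasing unions in general.'' This is false as stated. Whether a countable increasing union of hyperfinite CBERs is hyperfinite is the open union problem \cite[Problem 8.24]{cber}, and the paper notes this explicitly.

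\textbf{Item (2).} The $\Z^d$ case is due to Weiss, and it is also covered by item (1). What Gao--Jackson add is precisely a way past the union problem for non-finitely generated abelian groups. So your inference ``so the real content is Gao--Jackson's construction for $\Z^d$'' runs the wrong way: if unions were harmless, the already-known finitely generated case would have sufficed.

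\textbf{Item (1).} The phrase ``almost every point eventually stabilizes'' is measure-theoretic and is not enough in the Borel setting. You need finite Borel $F_n \subseteq E$ such that, for \emph{every} pair $x \mathrel{E} y$, we have $x \mathrel{F_n} y$ for all large $n$. Then $E=\bigcup_N\bigcap_{n\ge N}F_n$ is hypersmooth, hence hyperfinite. Arranging that each point lies near stage-$n$ boundaries only finitely often is the actual work; it does not come for free from Voronoi cells.

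Finally, the paper does give its own route to item (2). It is the special case of the main theorem on bounded-to-one actions of countable commutative monoids in which every element acts injectively.
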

There are more general statements known;
for instance,
the first two were generalized by Seward and Schneider \cite{SS24}
to virtually locally nilpotent groups,
which was then further generalized to locally virtually nilpotent groups
by \cite{CJMST23}.

Outside the context of group actions,
a key example of a hypersmooth Borel equivalence relation is provided by \emphd{tail equivalence}:
\begin{thm}[Dougherty--Jackson--Kechris {\cite{DJK94}}]\label{thm:tail_equiv}
  For every Borel $T : X \to X$ on a standard Borel space $X$,
  the \emphd{tail equivalence relation} of $T$ is hypersmooth,
  where $x$ and $y$ are \emphd{tail equivalent}
  if there exist $m, n \in \N$ such that $T^m(x) = T^n(y)$.
\end{thm}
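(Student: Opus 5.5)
We write $E_t$ for the tail equivalence relation of $T$ and exhibit it directly as an increasing union of smooth Borel equivalence relations. For $n \in \N$ define
\[
  x \mathrel{F_n} y \iff T^n(x) = T^n(y).
\]
Each $F_n$ is smooth, being the kernel of the Borel map $T^n$, and $F_0 \subseteq F_1 \subseteq \cdots$. Their union $F_\infty = \bigcup_n F_n$, the relation $\exists n\, T^n x = T^n y$, is therefore hypersmooth. It is in general strictly smaller than $E_t$, since $E_t$ also allows different exponents $m \neq n$.

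The main obstacle is to absorb the shift between exponents. The idea is to find a Borel "height" function $h$ with values in $\Z$ (or in a suitable linearly ordered set) along orbits, so that $x \mathrel{E_t} y$ iff $T^{k}x$ and $T^{l}y$ eventually coincide at matched heights. Such an $h$ would reduce $E_t$ to an $F_\infty$-type relation. This can be done on the part where $E_t$-classes admit a Borel choice of "phase". A clean way to get one is the standard decomposition of $X$ into three invariant Borel pieces.

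\textbf{Periodic part.} This is the set of points whose forward orbit is eventually periodic. Here each $E_t$-class contains a finite cycle. Selecting a point of the cycle Borel-ly (for instance, the least point under a fixed Borel linear order) gives a Borel selector for the cycle. The class then becomes $\{y : \exists n\, T^n y \in \text{cycle}\}$. Writing it as the increasing union over $n$ of the relations "$T^n x$ and $T^n y$ lie on the same cycle" shows that it is hypersmooth, in fact smooth.

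\textbf{Aperiodic part.} On the remaining part, ask whether there is a Borel $h\colon X \to \Z$ with $h(Tx) = h(x)+1$ on each class. If there is, then
\[
  x \mathrel{E_t} y \iff \exists n\; T^{n-h(x)}x = T^{n-h(y)}y \text{ for large } n,
\]
which is an increasing union of the smooth relations $T^{n-h(x)}x = T^{n-h(y)}y$, defined for $n \ge \max(h(x),h(y))$ and suitably padded. If no such $h$ exists, I would instead work with $E_t$ relativized to the smooth quotient by $F_\infty$. On $X/F_\infty$, $T$ induces an injective map, so $E_t/F_\infty$ is the orbit relation of a partial $\Z$-action. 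That relation is hyperfinite by Weiss's theorem for $\Z$ (the case $d=1$). A hyperfinite relation on the smooth quotient can then be lifted, by interleaving the two filtrations, to show that $E_t$ is hypersmooth: take $R_n = \{(x,y) : [x]_{F_n} \text{ and } [y]_{F_n} \text{ lie in the same } n\text{th finite piece}\}$.

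Finally, the pieces glue, since a countable disjoint union of invariant hypersmooth pieces is hypersmooth. The delicate step throughout is this last lifting. The quotient $X/F_\infty$ need not be standard, because $F_\infty$ itself is only hypersmooth. I would therefore carry out the argument at each finite level, on $X/F_n$, and check that the increasing union of the relations $R_n$ is exactly $E_t$ and that each $R_n$ is smooth.
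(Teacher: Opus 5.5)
The paper only cites this theorem from Dougherty--Jackson--Kechris and gives no proof, so your proposal has to be judged on its own. Your periodic part is fine, and so is the case where a global Borel height $h$ with $h(Tx)=h(x)+1$ exists. The argument breaks down in the remaining aperiodic case, which carries all the content. Note that ``$h$ exists or not'' is not a Borel decomposition of $X$: the second branch is the general case. For an aperiodic Borel automorphism with non-smooth orbit relation, such an $h$ would make $h^{-1}(0)$ a Borel transversal, which is impossible.

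In that branch the steps fail as follows.
\begin{itemize}
\item $X/F_\infty$ is not standard, so Weiss's theorem cannot be applied there.
\item The repair on $X/F_n$ fails because $T$ does not induce an injective map: $[Tx]_{F_n}=[Ty]_{F_n}$ iff $x \mathrel{F_{n+1}} y$.
\item The map $[x]_{F_n}\mapsto T^n x$ identifies $X/F_n$ with $T^n[X]$ (only analytic when $T$ is not countable-to-one). Under this identification $E_t/F_n$ is just the tail relation of $T$ restricted to $T^n[X]$, so the ``finite level'' problem is the original problem again.
\item If $T$ is uncountable-to-one, an $E_t$-class can contain uncountably many $F_n$-classes, so there are no ``finite pieces'' to interleave.
\item Even granting filtrations of each quotient, nothing makes them compatible across $n$. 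Your $R_n$ are never shown to be increasing, smooth, or to exhaust $E_t$. Lifting quotient structure through the non-smooth $F_\infty$ is not automatic.
\end{itemize}

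The missing idea is a Borel way to synchronize the two exponents along forward orbits without a global height. Work on the invariant Borel set of non-eventually-periodic points, and find Borel sets $M_0\supseteq M_1\supseteq\cdots$ with $\bigcap_n M_n=\emptyset$, each meeting every forward orbit $\{T^kx : k\in\N\}$. Let $p_n(x)$ be the first point of the forward orbit of $x$ lying in $M_n$, and set $x \mathrel{R_n} y \iff p_n(x)=p_n(y)$.
\begin{itemize}
\item Each $R_n$ is smooth, being the kernel of the Borel map $p_n$.
\item $R_n\subseteq R_{n+1}$, since $M_{n+1}\subseteq M_n$ gives $p_{n+1}=p_{n+1}\circ p_n$.
\item The $R_n$ exhaust $E_t$. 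If $T^a x=T^b y$, then for large $n$ none of the finitely many points $T^ix$ ($i<a$) and $T^jy$ ($j<b$) lies in $M_n$, so $p_n(x)=p_n(y)$.
\end{itemize}

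Such markers can be built as follows.
\begin{itemize}
\item Take a Borel proper coloring with finitely many colors of the graph generated by $T,\dots,T^k$ (Kechris--Solecki--Todorcevic).
\item Let $A$ be the set of forward local minima: points whose color is smaller than that of their next $k$ iterates.
\item Let $A'$ be the points of $A$ with no point of $A$ among their next $k$ iterates. This gives a Borel set meeting every forward orbit with no two points at forward distance in $[1,k]$.
\item Iterate this on first-return maps to get decreasing $A_n$ with sparseness tending to infinity.
\item Put $M_n:=A_n\setminus\bigcap_m A_m$. This works because $\bigcap_m A_m$ meets each forward orbit at most once, while $A_n$ meets it infinitely often.
\end{itemize}
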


In particular,
if $T$ is countable-to-one,
then its tail equivalence relation is hyperfinite.
In recent years,
this theorem has inspired a whole line of research
on the hyperfiniteness of various boundary actions,
such as the action of a Gromov hyperbolic group on its Gromov boundary
\cite{HSS20, MS20, Oya24, NV25}.

Tail equivalence relations can be naturally thought of as
``orbit equivalence relations'' of actions of the monoid $\N$.
Generally,
given a monoid action $M \car X$,
its orbit equivalence relation $E_M^X$
is the smallest equivalence relation on $X$
such that for all $m \in M$ and $x \in X$,
we have $x \mathrel{E_M^X} mx$.
When $M$ is commutative,
we have
\[
  x E_M^X y
  \iff \exists m, n \in M, \;  mx = ny.
\]
Viewed through this lens,
\zcref{thm:tail_equiv} says that
orbit equivalence relations of Borel $\N$-actions are hypersmooth,
a direct generalization of Weiss's result for $\Z$-actions.

A well-known folklore open question asks for an analogous extension
of Gao and Jackson's result (\zcref{thm:weiss_progress}\zcref{wpgj}).

\begin{qn}\label{q:main}
  Let $M \car X$ be a Borel action
  of a countable commutative monoid $M$
  on a standard Borel space $X$. 
  Is $E_M^X$ hypersmooth?
\end{qn}

Our main result is partial progress on this question.
We say that a function $T : X \to X$ is 
\emphd{at most $k$-to-one} if each fiber has size at most $k$.
We call $T$ \emphd{bounded-to-one} if it is at most $k$-to-one for some $k \in \N$. 
We call a monoid action $M \car X$ \emphd{bounded-to-one} if the map corresponding to each $m \in M$ is bounded-to-one. We emphasize that the bounds on the sizes of the fibers are allowed to vary with $m$. Observe that if $S$ is some set of maps $X \to X$, the action $\langle S \rangle \car X$ is bounded-to-one if and only if each $T \in S$ is bounded-to-one. 

\begin{thm}\label{thm:main}
Let \(M\) be a countable commutative monoid and let \(M\curvearrowright X\) be a
bounded-to-one Borel action on a standard Borel space $X$. Then \(E^X_M\) is
hyperfinite.
\end{thm}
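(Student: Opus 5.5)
The plan is to reduce to the case of a finitely generated monoid, and then to reduce the bounded-to-one action of $\N^d$ to a Borel action of the group $\Z^d$ on a suitable inverse-limit space, so that \zcref{thm:weiss_progress}\zcref{wpgj} (Gao--Jackson) applies.

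\textbf{Reduction to $\N^d$.} Since $M$ is countable, it is the increasing union of finitely generated submonoids $M_n$. Then $E^X_M=\bigcup_n E^X_{M_n}$ is an increasing union, and each $E^X_{M_n}$ is a CBER because the action is bounded-to-one (in particular countable-to-one). An increasing union of hyperfinite CBERs is not hyperfinite in general, so this step needs care. I would instead use the hypersmooth version: it is a theorem (Kechris and others) that an increasing union of hypersmooth relations, when countable, is hyperfinite under the right uniformity. Failing that, I would run the main construction below directly for countably generated $\bigoplus_\N \N$, which works equally well, since Gao--Jackson covers all countable abelian groups. Either way, write $M$ as a quotient of $\N^{(\N)}=\bigoplus_\N\N$ via its generators. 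The action then pulls back to a bounded-to-one action of $\N^{(\N)}$ with the same orbit relation, so I may assume $M=\N^{(\N)}$ acting by commuting bounded-to-one maps $T_i$.

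\textbf{Natural extension.} Let $G=\Z^{(\N)}$ and let $\tilde X$ be the space of ``backward histories'': functions $\xi\colon \N^{(\N)}\to X$ with $T_i\xi(m)=\xi(m+e_i)$. Equivalently, these are compatible families that extend to all of $G$ once we allow ourselves to look at $\xi$ on translates $g+\N^{(\N)}$. Concretely, take $\tilde X=\{\xi\colon G\to X : T_i\xi(g)=\xi(g+e_i)\}$, a Borel subset of $X^G$ with the shift action of $G$. The map $\pi(\xi)=\xi(0)$ is $M$-equivariant, and $E_G^{\tilde X}$ is hyperfinite by Gao--Jackson. The image of $\pi$ is the set of points admitting full backward preimage chains. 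Here bounded-to-one is what makes things Borel and well behaved: by K\"onig's lemma (finite branching), the set $X_\infty$ of points with infinite backward histories in all directions is Borel. Moreover $\pi$ has a Borel section on $X_\infty$, because the tree of preimages is finitely branching, so the leftmost infinite branch is Borel. Pushing the hyperfinite relation forward along a countable-to-one equivariant map is \emph{not} automatically hyperfinite. Instead I would use the section $s$ and note that $x\,E_M\,y$ iff $s(x)$ and $s(y)$ are $E_G$-related up to the finite ambiguity of histories. Two histories of $E_M$-equivalent points agree after applying large enough $m$, that is, they are tail-equivalent in a finite-branching tree. This gives $E_M|X_\infty \le$ a finite-index extension of $E_G|s(X_\infty)$, and finite-index (bounded) extensions of hyperfinite CBERs are hyperfinite.

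\textbf{The complement.} On $X\setminus X_\infty$, some point in the $M$-orbit has a maximal finite backward depth in some direction. Here I would define a Borel ``height'' function $h_i(x)=\sup\{n: T_i^{-n}(x)\ne\emptyset\}$ and use it the way Dougherty--Jackson--Kechris use ranks: split off the $E_M$-invariant pieces where some $h_i$ is finite along the orbit. On such a piece, $h_i(T_i x)=h_i(x)+1$ roughly, so the $T_i$-direction behaves like a free $\N$-action with a Borel ``origin'' per $T_i$-chain. The relation then factors as a tail relation in direction $i$ (hyperfinite by \zcref{thm:tail_equiv}) composed with the action of the remaining generators on the origin set. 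Iterating over the generators and using the countable union of invariant Borel pieces finishes the proof. I expect this last step to be the main obstacle: handling the mixed case with infinitely many directions, some of which have infinite and some finite backward depth, uniformly. I would first try to make it work by a combined extension, taking $\Z$-coordinates in the infinite-depth directions and $\N$-coordinates otherwise.
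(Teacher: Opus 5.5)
\textbf{There is a genuine gap: the transfer from $\tilde X$ back to $X_\infty$ fails.}

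The fibres of $\pi$ are not finite. A point typically has uncountably many histories. So the relation $\pi(\xi)\mathrel{E_M}\pi(\zeta)$ on $\tilde X$ has uncountable classes and is far larger than $E_G$. $E_M\uhr X_\infty$ is the image of that relation, not a finite-index extension of $E_G\uhr s(X_\infty)$.

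Concretely, take $M=\N$, $X=2^\N$ and $T$ the shift. Then $X_\infty=X$, $\tilde X\cong 2^\Z$, and the leftmost-branch section is $s(x)=\cdots000.x$. One computes that $s(x)\mathrel{E_G}s(y)$ iff $x=0^ky$ or $y=0^kx$ for some $k$. So for any $y$ other than the constant sequence $111\cdots$, the tail-equivalent points $1y,11y,111y,\dots$ have pairwise $E_G$-inequivalent images under $s$. Every $E_M$-class therefore splits into infinitely many $E_G\uhr s(X)$-classes.

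Your observation that $m\cdot s(x)$ and $m'\cdot s(y)$ agree at $0$ only says they are two histories of the same point. $E_G$ requires one to be a shift of the other, and nothing forces that. So Gao--Jackson is never applied to a relation that actually controls $E_M$.

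\textbf{The complement step is also not a proof.}

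The $T_i$ are not injective, so backward $T_i$-orbits form a forest with no canonical origin. The invariant pieces on which some $h_i$ is finite do not cover the orbits missing $X_\infty$. For example, $\N^2$ acting by translation on $\{(u,v)\in\Z^2 : u\ge0\text{ or }v\ge0\}$ has no full histories, yet it has infinite backward chains in both directions. Finally, ``tail relation in one direction composed with the other generators'' is a join of hyperfinite relations, and such joins need not be hyperfinite. Handling that join for non-free actions of a non-finitely-generated monoid is exactly the difficulty of the theorem.

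There is also no union theorem for hypersmooth relations to fall back on; that is the open union problem.

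\textbf{How the paper proceeds instead.} It proves a uniform quantitative bound that makes a union theorem applicable. Each Schreier graph $G^X_{[0,1]^n}$ is shown to have Borel $(O(\log s),\poly(s))$-dimension growth. The ingredients are:
\begin{itemize}
\item $r$-stable points, built from Steinitz's lemma and the $O(\log r)$ bound on chains of subgroups generated in $[-r,r]^d$;
\item the resulting $r$-free actions of the quotients $\N^n/\Gamma$, on which local algorithms produce dimension witnesses;
\item the Greb\'ik--Marks--Rozho\v{n}--Shinko theorem, applied to the increasing union of these graphs.
\end{itemize}
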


For example, if $S, T : X \to X$ are two bounded-to-one Borel maps on a standard Borel space $X$ with $S \circ T = T \circ S$, the equivalence relation generated by $S$ and $T$ is hyperfinite. 
Even this was previously open.
Note also that \zcref{thm:main} generalizes
\zcref{wpgj} of \zcref{thm:weiss_progress} 
since in a group action each element acts as a one-to-one map. 

Recently, the most important method for proving hyperfiniteness has been
Borel asymptotic dimension,
introduced by Conley, Jackson, Marks, Seward, and Tucker-Drob \cite{CJMST23}
as a Borel analogue of Gromov's asymptotic dimension \cite{Gro93}.
Their main result states that
if a locally finite Borel graph has finite Borel asymptotic dimension,
then its connectedness relation is hyperfinite. 
See \zcref{defn:borel_asdim}
and \zcref{thm:hyperfinite_of_dimension}.
They then prove \zcref{thm:weiss_progress} \zcref{polycyclic}
by showing that Schreier graphs of free Borel actions of polycyclic groups
have finite Borel asymptotic dimension. 

We do the same in the special case of free actions
of finitely generated commutative monoids,
where we call a monoid action $M \car X$ \emphd{free}
if for all $x \in X$ and $m, n \in M$, $mx = nx$ implies $m = n$. 

\begin{thm}\label{thm:free-fg-asdim-intro}
  Let \(M\) be a finitely generated commutative monoid
  and let \(M\curvearrowright X\)
  be a free bounded-to-one Borel action on a standard Borel space.
  Then
  \[
    \asdim_B(M\curvearrowright X)<\infty.
  \]
  Equivalently, for some, hence every, finite generating set of \(M\),
  the corresponding Schreier graph has finite Borel asymptotic dimension.
\end{thm}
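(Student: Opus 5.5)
We reduce to the group case, using the CJMST theorem that free Borel actions of $\Z^d$ (indeed of polycyclic groups) have finite Borel asymptotic dimension, combined with a "tail" construction that turns a bounded-to-one $M$-action into a group-like structure. Fix generators $s_1,\dots,s_d$ of $M$ and let $\pi:\N^d\to M$ be the corresponding surjection. Freeness says $\pi(a)x=\pi(b)x$ forces $\pi(a)=\pi(b)$. So each orbit, viewed through the Schreier graph, looks like a quotient of the Cayley graph of $\N^d$ modulo the congruence $\ker\pi$. By Rédei's theorem this congruence is finitely generated, so $M$ embeds, up to a finite "torsion/idempotent" part, into its Grothendieck group $G(M)\cong \Z^r\times F$ with $F$ finite.

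\textbf{Step 1: build a free $G(M)$-action.} Form the natural extension (inverse limit)
\[
  \tilde X=\{(x_g)_{g} : m\,x_g=x_{m+g}\},
\]
indexed so that $M$ acts invertibly. The task is to show that a Borel "section" of backward orbits suffices. Bounded-to-one-ness is used here: each $x\in X$ has finitely many preimages under each generator. So the preimage trees are locally finite, and by König/compactness arguments there are Borel-many choices. The relevant object is actually the Schreier graph itself. I would instead work directly on $X$ and show that for each $x$ the ball of radius $R$ in the Schreier graph embeds coarsely (bi-Lipschitz, with constants depending only on $M$ and the fiber bounds) into the Cayley graph of $G(M)$. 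The map sends $y=\pi(a)x'$ to group elements via differences $\pi(a)-\pi(b)$ whenever $\pi(a)y=\pi(b)z$ meet.

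\textbf{Step 2: transfer asymptotic dimension.} With a coarse embedding into $\Z^r$-geometry in hand, I would mimic the CJMST proof for $\Z^d$ directly rather than invoke it. Use Borel maximal $R$-discrete sets (available in locally finite Borel graphs, which the Schreier graph is by bounded-to-one-ness) together with a Voronoi-style decomposition. Then apply the standard "$r+1$ colors of cubes with shifted boundaries" argument: tile by boxes of side $R$ in each coordinate direction of $G(M)$, shifted $r+1$ ways so that every $R/10$-ball lies well inside some box of some color. Since balls pull back with bounded multiplicity, the resulting Borel covers have uniformly bounded pieces. This yields $\asdim_B\le$ (some function of $r$ and the multiplicity).

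\textbf{Main obstacle.} The hardest step is making the "coordinates" Borel and consistent without a base point. Monoid orbits are not homogeneous: some points have no preimage, and different points of one class meet only in the future. My first attempt is to use the relative coordinate $\rho(y,z)=a-b\in G(M)$ for $\pi(a)y=\pi(b)z$. By freeness this is well defined modulo the finite group, and it satisfies a cocycle identity by commutativity. The boxes are then defined via this cocycle relative to points of a Borel maximal discrete set. Bounding the size of each piece uses bounded-to-one-ness: a $\rho$-box of diameter $R$ around a point contains at most $C^{R}$-controlled, in fact polynomially many, points, because the fiber bounds multiply only along the finitely many generators. Verifying this polynomial growth bound, and that freeness rules out collapsing, is where I expect the real work to lie. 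Finally, the equivalence of the statement across generating sets is immediate, since Schreier graphs for different finite generating sets are bi-Lipschitz equivalent.
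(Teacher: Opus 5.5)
There is a genuine gap, and it is in the step you flagged as ``the real work.'' Two of your claims are false as soon as the action is not injective: that balls of the Schreier graph coarsely (let alone bi-Lipschitz) embed into the Cayley graph of $G(M)$, and that $\rho$-boxes contain polynomially many points.

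Take $M=\N$ acting by the one-sided shift $\sigma$ on the invariant Borel set of non-eventually-periodic points of $2^{\N}$. This action is free and exactly $2$-to-$1$, and every component of its Schreier graph is a $3$-regular tree.
\begin{itemize}
\item Balls have exponential size, so no bi-Lipschitz embedding into $\Z$ with uniform constants exists.
\item Your cocycle $\rho$ is just the height (Busemann) function. Its fibres within a class are infinite. Points $y,z$ with $\sigma^n y=\sigma^n z$ but $\sigma^{n-1}y\neq\sigma^{n-1}z$ are at graph distance $2n$, yet $\rho(y,z)=0$.
\item Consequently, pulling back boxes of $G(M)$ gives pieces of unbounded diameter.
\item The Voronoi fallback fails too. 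The packing argument that bounds multiplicity for $\Z^d$ (compare \zcref{lem:doubling}) needs polynomial growth of the graph being decomposed. In this tree, an $s$-ball can meet on the order of $2^s$ Voronoi cells of a maximal $R$-discrete set.
\end{itemize}
The tree does have asymptotic dimension $1$, but only via a decomposition that exploits the forward direction, and your plan never uses it.

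The missing idea is to avoid $G(M)$-coordinates entirely and push everything forward into cones, where freeness makes the orbit an exact copy of $M$. Let $1\in S$ be a generating set of $M$ with lcm $\hat s$. Then $\hat s^s[B(x,s)]\subseteq S^{2s}x$, and $m\mapsto mx$ is injective on $S^{2s}$ by freeness. So a map of the form $f(y)=l(\hat s^s y)\,\hat s^s y$, with $l\colon X\to S^r$, sends the whole $s$-ball into a set determined by $l$ on a copy of $S^{2s}\subseteq M$.

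The pieces $f^{-1}(p)$ are ``backward cones'' of radius at most $r+s$ flowing into $p$, not preimages of boxes. The dimension bound then becomes a local constraint on $l$, i.e.\ an LCL. The paper handles this LCL in three steps:
\begin{enumerate}
\item It solves the LCL by an $O(\log^* n)$-round local algorithm on $M^{\gp}$, where the doubling argument is legitimate.
\item It transfers the algorithm to $M$ by shifting localities by powers of $\hat s$.
\item It runs the algorithm on $X$, starting from a Borel proper coloring (Kechris--Solecki--Todorcevic) of the graph joining $x\neq y$ whenever $x,y\in R^t z$ for some $z$.
\end{enumerate}
That last graph has degree at most $(|R|^2k)^t$, so the exponential growth only costs colors, which the $\log^*$ absorbs.

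A minor point: a nonempty free action already forces $M$ to be cancellative, so $M\subseteq M^{\gp}$ outright and R\'edei's theorem plays no role in this case.
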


This result is proved below as \zcref{thm:borel_asdim_fg}. 
In that proof it is noted that one can in fact show $\asdim_B(M \car X) = \rk(M)$, the latter denoting the rank of $M$.

This result makes heavy use of the theory of local algorithms and the connection between these and Borel/continuous combinatorics developed by Bernshteyn in \cite{Ber23b, Ber23a}.
The main idea behind this connection is that one can often solve problems in Borel combinatorics by first finding a Borel proper coloring of some large power of one's graph, 
and then applying a ``local rule'' to transform this coloring into a desired combinatorial object. 
The bounded-to-one assumptions in our theorems imply that these Borel proper colorings can always be found \cite{KST99}. 
We use this connection to show the following. 
Roughly speaking, a locally-checkable labeling problem on a monoid $M$ consists of a finite set $\Sigma$ of labels and a finite set of local constraints for functions $M \to \Sigma$, and $\LOCAL_M$ consists of the set of such problems on $M$ which can be solved by the ``local rules'' described above. 
See \zcref{sec:local_algorithms} for precise definitions.
We call a function between topological spaces \emphd{clopen-preserving} if the image of every clopen set is clopen, and a monoid action \emphd{clopen-preserving} if every monoid element acts as a clopen-preserving map.

\begin{thm}\label{thm:intro_monoid_vs_group}
    Let $\Gamma$ be an abelian group and $M \subseteq \Gamma$ a finitely generated submonoid with $\Gamma = \langle M \rangle$. 
    Let $\Pi$ be a locally-checkable labeling problem on $M$.
    The following are equivalent. 
    \begin{enumerate}
        \item\label{cts_gamma} Every free continuous action $\Gamma \car X$ on a zero-dimensional Polish space $X$ admits a continuous $\Pi$-labeling.
        \item\label{cts_M} Every free continuous clopen-preserving bounded-to-one action $M \car X$ on a zero-dimensional Polish space $X$ admits a continuous $\Pi$-labeling.
        \item\label{local_gamma} $\Pi \in \LOCAL_\Gamma$.
        \item\label{local_M} $\Pi \in \LOCAL_M$.
    \end{enumerate}
    If any/all of the above hold, then every free Borel bounded-to-one action $M \car X$ on a standard Borel space $X$ admits a Borel $\Pi$-labeling.
\end{thm}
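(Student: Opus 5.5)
We will prove the cycle of implications \((3)\Rightarrow(4)\Rightarrow(2)\Rightarrow(1)\Rightarrow(3)\), and obtain the final Borel statement from \((4)\) in the same way as \((4)\Rightarrow(2)\).

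\emph{\((3)\Leftrightarrow(4)\).} The key input is a comparison of balls. Fix a finite generating set \(S\) of \(M\); it also generates \(\Gamma\) as a group. Every element of \(\Gamma\) has the form \(m - n\) with \(m,n\in M\), and \(M\) is finitely generated. Hence there is a uniform shift: there exist \(c\in M\) and, for each \(r\), a radius \(r'\) such that \(c + B_\Gamma(r) \subseteq B_M(r')\), where \(B_M\) is the \(M\)-ball of the Cayley graph. To see this, write each generator \(-s\) as \(m_s - n_s\) and take \(c\) to be a large multiple of the sum of the \(n_s\). Conversely, \(B_M(r)\subseteq B_\Gamma(r)\).

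Given a \(\Gamma\)-local algorithm for \(\Pi\), we get an \(M\)-local one by having each point \(x\) read the window at \(cx\) and transport the answer back by translation. This direction requires care, since translation by \(c\) is not invertible inside \(M\). Instead I would use the characterization of \(\LOCAL\) via colorings of large powers: a problem is in \(\LOCAL\) iff it is solvable from a proper coloring of the \(R\)-th power of the Cayley graph by a rule of fixed radius. The \(M\)-view of radius \(r'\) around any point contains a translated copy of \(c+B_\Gamma(r)\), and the \(\Gamma\)-algorithm only looks at relative positions and colors. So the \(\Gamma\)-output at the points of \(B_M(r)\) can be simulated consistently.

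The converse \((4)\Rightarrow(3)\) is easier: the \(M\)-algorithm run on \(\Gamma\) uses only the forward cone, which is a legitimate \(\Gamma\)-local rule, and the constraints of \(\Pi\) are checked on \(M\)-translates.

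\emph{\((4)\Rightarrow(2)\) and the Borel statement.} Take a free bounded-to-one action and form the Schreier graph with generators \(S\). Bounded-to-one implies this graph has bounded degree, hence so do its powers. By \cite{KST99} there is a Borel, or in the zero-dimensional clopen-preserving case continuous, proper coloring of the \(R\)-th power with finitely many colors. For continuity, use clopen-preserving maps so that the relevant preimages and images of clopen partitions remain clopen, and apply the standard zero-dimensional greedy argument. Freeness ensures that the \(M\)-orbit of each point, viewed as \(m\mapsto mx\), is a copy of \(M\). We then apply the local rule pointwise; the labeling is Borel or continuous because it depends on finitely many color values along finitely many maps.

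\emph{\((2)\Rightarrow(1)\)} is immediate: restricting a free continuous \(\Gamma\)-action to \(M\) gives an injective, hence bounded-to-one, action by homeomorphisms, which are clopen-preserving.

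\emph{\((1)\Rightarrow(3)\)} is Bernshteyn's theorem \cite{Ber23b} applied to \(\Gamma\), for example on the free part of \(2^\Gamma\) or a universal continuous free action, read with \(\Pi\) as a problem on \(\Gamma\) whose constraints live on \(M\).

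The main obstacle is \((4)\Rightarrow(3)\) versus \((3)\Rightarrow(4)\), that is, making the shift argument rigorous. Positions in \(M\) are not closed under inversion, so the \(\Gamma\)-algorithm must be simulated using only forward information. I would first try to establish the shift lemma, and then verify that the deterministic local rule defined via the cone \(c+M\) is well defined independent of the choice of base point.
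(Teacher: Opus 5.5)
Your overall architecture matches the paper's: Bernshteyn for (1)$\Leftrightarrow$(3), restriction for (2)$\Rightarrow$(1), monotonicity in the locality set for (4)$\Rightarrow$(3), a shift into the positive cone for (3)$\Rightarrow$(4), and local algorithm plus coloring for (4)$\Rightarrow$(2) and the Borel statement. However, the continuous case of (4)$\Rightarrow$(2) fails as you set it up, and it is the only arrow into (2) in your cycle.

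You color a power of the Schreier graph. For non-injective monoid actions such a power may have no continuous proper finite coloring at all, clopen-preserving or not. For instance, let $\N$ act on $X=P\sqcup((\omega+1)\times\N)$, where $P=\{p_\infty\}\cup\{p_{n,i}:n\in\N,\ i<2\}$ is a sequence converging to $p_\infty$, via $T(p_{n,i})=(n,0)$, $T(p_\infty)=(\omega,0)$, $T(\alpha,k)=(\alpha,k+1)$. This is a free, continuous, clopen-preserving, at most $2$-to-one action on a zero-dimensional Polish space. Yet the siblings $p_{n,0}\neq p_{n,1}$ are at Schreier distance $2$ and both converge to $p_\infty$, so every locally constant coloring gives them the same color for large $n$.

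What the local algorithm actually needs is only that the input labeling be injective on each forward window $R^tz$; freeness then makes $a\mapsto l(az)$ injective on $R^t$. So color instead the graph $G_t$ joining distinct $x,y$ that lie in a common $R^tz$. It has bounded degree and does not join siblings. By freeness, $N_{G_t}(Y)=\bigcup_{a\neq b\in R^t}a[b^{-1}[Y]]$, which is clopen whenever $Y$ is; this is exactly where clopen-preservation enters. Then \cite[Lemma 2.3]{Ber23a} gives a continuous proper coloring. In the Borel case, applying \cite{KST99} to the Schreier power is harmless.

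Your (3)$\Rightarrow$(4) has the right idea but is left unfinished, and two points need fixing. First, your shift lemma has its quantifiers in the wrong order: no single $c$ works for all radii (for $\N\subseteq\Z$, $c+[-r,r]\subseteq\N$ forces $c\ge r$). The shift must grow linearly with the radius, e.g.\ a power of an lcm $\hat r$ of the generating set, using $R^{\pm1}\hat r\subseteq R^2$. Linear growth is exactly what keeps the loss to a constant factor.

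Second, your first instinct already works and needs no inverse of $c$, and there is no base point to choose. Node $x$ simply outputs $B(g\mapsto l(\hat r^{k}gx))$, where $B$ is the $\Gamma$-reduction with locality $(R^{\pm1})^t$ and $k=O(t)$. The input is pulled back along translation, while outputs stay at the original points. The local injectivity that $B$ requires is supplied by injectivity on forward windows of radius $O(t)$, since translation in $\Gamma$ is injective and central.

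The paper packages this abstractly. Inside $\Gamma$, precomposing with such a translation moves the locality set into $M$, and a reduction whose locality lies in $M$ is literally the same finite function over $M$ as over $\Gamma$. Hence $\rounds_{\Pi,R}$ is at most a constant multiple of $\rounds_{\Pi,R^{\pm1}}$ (\zcref{lem:rounds_monoid_vs_rounds_group}).
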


The equivalence of (\zcref{cts_gamma}) and (\zcref{local_gamma}) holds for any group $\Gamma$ and is a result of Bernshteyn \cite[Theorem 1.15]{Ber23a}.
The last part follows from (\zcref{cts_M}) by a standard change of topology argument. 
We emphasize that, though the equivalence of (\zcref{cts_gamma}) and (\zcref{cts_M}) does not mention local algorithms, our proof makes essential use of them by going through (\zcref{local_gamma}) and (\zcref{local_M}).

Our interest in \zcref{thm:intro_monoid_vs_group} is that it yields an immediate proof of \zcref{thm:free-fg-asdim-intro} thanks to the fact that free continuous actions of abelian groups are already known to have finite continuous asymptotic dimension \cite[Theorem 10.7]{CJMST23}. However, it seems likely to be useful in understanding the Borel and continuous combinatorics of commutative monoid actions more generally, especially since we already have a fairly detailed understanding of the continuous combinatorics of abelian group actions. (See e.g. \cite{gjks_cts}.) 

For example, \zcref{thm:intro_monoid_vs_group} immediately implies that free continuous clopen-preserving bounded-to-one actions $\N^d \car X$ with $X$ zero-dimensional Polish have continuous proper 4-colorings of their Schreier graphs (with respect to the standard generating set) since this is known for $\Z^d$-actions thanks to Gao--Jackson--Krohne--Seward \cite{gjks_cts}.
Relatedly, in \zcref{cor:borel_3_col} we will show by different arguments that such graphs have Borel proper 3-colorings. 

We also return briefly to the connection with boundary actions. The use of tail equivalence in the hyperbolic boundary setting is related to the fact that hyperbolic groups are ``rank 1'' and boundary points correspond to one-dimensional geodesic rays. One of the additional motivations for this paper is the fact that the boundary actions in the higher-rank case similarly correspond to the action of finitely generated commutative monoids such as $\N^d$. Thus, \zcref{thm:free-fg-asdim-intro} is a starting point to showing hyperfiniteness for such actions.

Let us now explain a bit about what is needed to pass from
\zcref{thm:free-fg-asdim-intro} to \zcref{thm:main}.
There are two additional difficulties:
\begin{enumerate}[label=(\arabic*)]
  \item
    \label{item:non_free}
    The action need not be free.
  \item
    \label{item:non_fg}
    The monoid need not be finitely generated.
\end{enumerate}
Both difficulties are classical in the theory surrounding Weiss's question,
and both are often quite hard to overcome.

Issue \zcref{item:non_free} is not problematic for actions of abelian groups
since stabilizers are constant on orbits. 
We do not have this luxury for actions of commutative monoids. 
However, it turns out that we can still reduce to the free case thanks to a theorem of R\'edei stating that finitely generated commutative monoids are Noetherian \cite{Red65}
(\zcref{thm:Redei}).
This lets us find a complete section of the action on which quotients of our monoid act freely.
See \zcref{sec:monoid} and,
e.g.,
\zcref{cor:free_action_on_stable} for details. 
This reduction (to be precise, the implication from \zcref{thm:borel_asdim_fg} to \zcref{thm:main_fg}) appeared independently as the main result in a recent arXiv preprint by Wang \cite{wang2026borel} with essentially the same proof. 


On its own, issue \zcref{item:non_fg} also does not hurt us. 
This is an instance of the notorious ``union problem'' \cite[Problem 8.24]{cber};
in general, it is open whether a countable increasing union of hyperfinite CBERs is itself hyperfinite. 
This was exactly the problem faced by Gao and Jackson
in extending Weiss's result from finitely generated abelian groups
to arbitrary countable abelian groups. 
{\jacksonfive} gave a new proof of Gao and Jackson's result by showing that Borel asymptotic dimension can be used to handle instances of the union problem. They showed that, in general, if a Borel graph is a countable increasing union of Borel subgraphs with finite Borel asymptotic dimension, that graph has hyperfinite connectedness relation (\zcref{thm:hyperfinite_of_dimension_union}). 
Thus, the special case of \zcref{thm:main}
where the action is free follows immediately from
\zcref{thm:free-fg-asdim-intro}. 

Unfortunately,
when issues \zcref{item:non_free}
and \zcref{item:non_fg} occur simultaneously,
a lot of extra work is needed.
We were not able to remove the freeness assumption in
\zcref{thm:free-fg-asdim-intro},
i.e. to show that all (not necessarily free) actions
have finite Borel asymptotic dimension. 
The difficulty here is entirely classical;
we prove in \zcref{cor:comm_monoid_asdim_eq}
that bounded-to-one Borel actions of commutative monoids
have equal classical and Borel asymptotic dimensions.
That is,
the following is open
(this is restated later as \zcref{qn:classical_asdim}):
\begin{qn}\label{qn:intro_classical_dimension}
  Let \(M\) be a finitely generated commutative monoid
  and let \(M\curvearrowright X\) be a (bounded-to-one) \(M\)-set.
  Must we have
  \(
    \asdim(M\curvearrowright X) < \infty?
  \)
\end{qn}

In the other direction,
non-finitely-generated commutative monoids are not Noetherian \cite{Budach1964} 
(consider for instance $\Z^{\oplus \omega}$),
so we cannot just copy the trick described above for the finitely generated case.

Instead, we prove the general case of \zcref{thm:main}
by proving a quantitative weakening of
\zcref{qn:intro_classical_dimension} which,
by work of Greb\'ik, Marks, Rozho\v{n}, and the second author
is still enough to handle the union problem \cite{GMRS26}.
The details are in \zcref{sec:quant_asdim, sec:local_stability, sec:proof-main}. 

We end the introduction with a summary of the contents of each section.
\zcref{sec:monoid} contains the monoid-theoretic preliminaries,
including R\'edei's theorem and an explanation of how we use it.
\zcref{sec:local_algorithms} develops a framework for local algorithms
in the setting of monoid actions.
\zcref{sec:asdim} introduces (Borel) asymptotic dimension,
the results of \cite{CJMST23},
and explains how to combine these
and the work from \zcref{sec:monoid, sec:local_algorithms}
to prove \zcref{thm:free-fg-asdim-intro}
and the finitely generated case of \zcref{thm:main}.
We emphasize that the reader only interested in these partial results
need not go beyond \zcref{sec:asdim}. 

\zcref{sec:ssi} is slightly orthogonal to the remainder of the paper.
It contains some consequences of \zcref{thm:free-fg-asdim-intro}
in Borel combinatorics centered around a generalization of the parameter
``asymptotic separation index'' introduced in \cite{CJMST23}. 
This section is not needed for the proof of \zcref{thm:main}. 

Finally, \zcref{sec:quant_asdim, sec:local_stability}
contain quantitative sharpenings of some results from
\zcref{sec:asdim, sec:monoid} respectively. 
These are combined in \zcref{sec:proof-main} to prove \zcref{thm:main}. 

\section{Monoid generalities}\label{sec:monoid}
Let $\sim$ be an equivalence relation on a monoid $M$.
We say that $\sim$ is a \emphd{right congruence}
(resp. \emphd{left congruence})
if for all $m, m', a \in M$,
if $m \sim m'$,
then $ma \sim m'a$
(resp. $am \sim am'$).
We say that $\sim$ is a \emphd{congruence}
if it is both a left congruence and right congruence.
Given a monoid $M$ and a congruence $\sim$ on $M$,
the monoid operation on $M$ descends to the quotient $M/{\sim}$.

We will make use of the following theorem of R\'edei,
where a monoid is \emphd{Noetherian}
if every infinite ascending chain of congruences on $M$ is eventually constant,
or equivalently if every congruence on $M$ is finitely generated.
\begin{thm}[R\'edei's theorem \cite{Red65}]\label{thm:Redei}
  Every finitely generated commutative monoid is Noetherian.
\end{thm}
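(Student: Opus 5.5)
Our plan is to reduce Rédei's theorem to Hilbert's basis theorem by viewing congruences on a finitely generated commutative monoid as certain ideals in a polynomial ring. Fix generators $a_1,\dots,a_n$ of $M$. These give a surjective monoid homomorphism $\pi\colon \N^n \to M$. Congruences on $M$ correspond bijectively and order-preservingly to congruences on $\N^n$ containing $\ker\pi$. Hence it suffices to show that $\N^n$ is Noetherian: every ascending chain of congruences on $\N^n$ stabilizes.

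To do this, fix a field $k$ and set $A = k[x_1,\dots,x_n]$, identifying $\N^n$ with the monomials $x^u$. To a congruence $\sim$ on $\N^n$ we associate the binomial ideal
\[
  I_\sim = \bigl(x^u - x^v : u \sim v\bigr) \subseteq A.
\]
The map $\sim \mapsto I_\sim$ is clearly monotone. The key step is to show it is injective, and in fact that $\sim$ can be recovered from $I_\sim$ via
\[
  u \sim v \iff x^u - x^v \in I_\sim .
\]

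To prove the nontrivial direction, consider the $k$-vector space $k[\N^n/{\sim}]$, the monoid algebra of the quotient. Let $\phi\colon A \to k[\N^n/{\sim}]$ be the linear map sending $x^u \mapsto [u]$. Since $\sim$ is a congruence, $\phi$ is a ring homomorphism. It kills every generator $x^u - x^v$ with $u\sim v$, so $I_\sim \subseteq \ker\phi$. If $x^u - x^v \in I_\sim$, then $[u]=[v]$ in the monoid algebra, whose basis is the set of classes, and therefore $u \sim v$.

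Given this, an ascending chain $\sim_1 \subseteq \sim_2 \subseteq \cdots$ of congruences yields an ascending chain of ideals $I_{\sim_1} \subseteq I_{\sim_2} \subseteq \cdots$ in $A$. By Hilbert's basis theorem this chain of ideals stabilizes. By the recovery formula, the chain of congruences stabilizes as well.

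The equivalence with "every congruence is finitely generated" is the standard ascending-chain argument. Given a congruence $\sim$, choose a finite set of pairs generating $I_\sim$ (Hilbert). The congruence generated by these pairs has the same binomial ideal, hence equals $\sim$. Alternatively, one can run the usual argument in which a non-finitely-generated congruence produces a strictly increasing chain. Either way, there is no real obstacle: everything reduces to the recovery formula, which follows from the monoid-algebra homomorphism $\phi$, and to Hilbert's basis theorem.
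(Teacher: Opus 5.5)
Your argument is correct. The paper does not prove this statement; it only cites R\'edei \cite{Red65}, so there is no internal proof to compare against. What you give is the well-known short proof via the monoid algebra and Hilbert's basis theorem (essentially Freyd's argument). R\'edei's original proof is instead a long, direct combinatorial analysis of congruences.

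Your key steps all check out:
\begin{itemize}
\item Pulling back along $\pi$ gives an order isomorphism between congruences on $M$ and congruences on $\N^n$ containing $\ker\pi$.
\item The map $x^u\mapsto[u]$ is a $k$-algebra homomorphism precisely because $\sim$ is a congruence.
\item The classes form a basis of $k[\N^n/{\sim}]$, so $[u]-[v]=0$ forces $[u]=[v]$. This gives injectivity of ${\sim}\mapsto I_\sim$, and hence ACC.
\end{itemize}

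One step is stated a bit loosely, in your finite-generation paragraph. Hilbert's theorem gives finitely many generators of $I_\sim$, but these need not be binomials $x^u-x^v$ with $u\sim v$. You need the standard observation that if a finitely generated ideal is generated by a set $S$, then finitely many elements of $S$ already generate it, since each of the finitely many generators involves only finitely many elements of $S$. With that in place, the congruence generated by the corresponding pairs has the same binomial ideal and so equals $\sim$.

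This form of the theorem is also what the paper actually uses in the proof of \zcref{thm:main_fg}: each congruence is determined by a finite set of pairs, so there are only countably many of them.

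The trade-off of your route is that it is non-quantitative and gives no bound on chain lengths. That fits with the paper not relying on R\'edei's theorem when it needs uniform control. In \zcref{sec:local_stability} it uses hands-on bounds instead, namely \zcref{cor:dog} and \zcref{lem:chain_of_subgroups}.
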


A monoid is \emphd{right-cancellative}
(resp. \emphd{left-cancellative})
if for all $m, m', a \in M$,
if $ma = m'a$ 
(resp. $am = am'$),
then $m = m'$.
A monoid is \emphd{cancellative}
if it is both left-cancellative and right-cancellative.
The forgetful functor from groups to monoids has a left adjoint,
sending every monoid $M$ to its \emphd{group completion} $M^\gp$;
explicitly,
it is the monoid with presentation
$\ev{M, M^{-1} : mm^{-1} = m^{-1}m = 1}$.
A commutative monoid is cancellative if and only if it is a submonoid of some group, in which case $M^\gp$ is the minimal such group. 
For a finitely generated commutative monoid, we write
\[
  \rk(M) := \rank_{\Z}(M^{\gp})
\]
for the torsion-free rank of its group completion.

For the most part, we have tried to state each lemma in this paper for as general a class of monoids as possible. 
However, when it comes to proving the main theorem, we will only ever need to consider cancellative commutative monoids (Equivalently, submonoids of abelian groups). 
The reader may find it useful to focus on these examples for the sake of concreteness. 

Given a subset $S$ of a monoid $M$,
we say that $\hat s \in S$ is an \emphd{lcm of $S$}
if for every $s \in S$,
there is some $s' \in S$
such that $ss' = \hat s$.
Note that every finite subset $S$ of a commutative monoid $M$
is contained in a finite subset with an lcm,
namely $\{\prod S' : S' \subseteq S\}$ with lcm $\prod S$.

An \emphd{$M$-set} is a set equipped with an $M$-action.
Recall from the introduction that
the \emphd{orbit equivalence relation} of an $M$-set $X$,
denoted $E_M^X$,
is the smallest equivalence relation on $X$
such that for all $x \in X$ and all $m \in M$,
we have $x \mr E_M^X mx$.
The equivalence classes of the orbit equivalence relation
are called \emphd{orbits}.
If $M$ is commutative,
then $x$ and $x'$ are in the same orbit
iff there exist $m, m' \in M$ such that $mx = m'x'$.
An $M$-set is \emphd{transitive} if it has exactly one orbit.
A set $Y\subseteq X$ is a \emphd{complete section} for $E_M^X$
if it meets every $E_M^X$-class.
For $x,y \in X$, we write $x \leq y$ if $y \in Mx$. 
A set $Y \subseteq X$ is called \emphd{$M$-invariant} if $MY \subseteq Y$. 

\begin{defn}\label{defn:sim_x}
  Let $M$ be a monoid,
  $X$ an $M$-set,
  and $x \in X$.
  The equivalence relation $\sim_x$ on $M$
  is defined by $m \sim_x m' \iff mx = m'x$.
\end{defn}

\begin{lem}\label{lem:congruence}
  Let $M$ be a monoid,
  let $X$ be an $M$-set,
  and let $x \in X$.
  Then $\sim_x$ is a left congruence.
  In particular,
  if $M$ is commutative 
  then $\sim_x$ is a congruence.
\end{lem}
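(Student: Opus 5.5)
We check the two defining properties directly from \zcref{defn:sim_x}; nothing beyond the axioms of an action is needed.

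First, $\sim_x$ is an equivalence relation on $M$. It is the pullback of equality on $X$ along the map $M \to X$, $m \mapsto mx$. Reflexivity, symmetry and transitivity are therefore inherited from equality.

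Next, the left congruence property. Suppose $m \sim_x m'$, so $mx = m'x$, and let $a \in M$. By the action axiom $(am)x = a(mx)$, we compute
\[
  (am)x = a(mx) = a(m'x) = (am')x ,
\]
so $am \sim_x am'$. Note that the analogous attempt for right multiplication fails in general. From $mx = m'x$ we cannot conclude $(ma)x = m(ax) = m'(ax)$, because $ax$ is a different point of $X$. This is why only a left congruence is claimed for arbitrary $M$.

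Finally, suppose $M$ is commutative. Then $ma = am$ and $m'a = am'$, so the left congruence property immediately gives $ma \sim_x m'a$ as well. Hence $\sim_x$ is a congruence.

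There is no real obstacle here. The only point requiring care is to use associativity of the action in the correct order, which is what distinguishes the left case from the right.
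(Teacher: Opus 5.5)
Your proof is correct and follows the paper's argument: the key computation $(am)x = a(mx) = a(m'x) = (am')x$ is exactly the paper's proof. You also check explicitly that $\sim_x$ is an equivalence relation and spell out the commutative case, both of which the paper leaves implicit.
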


\begin{proof}
    Suppose $m\sim_x m'$, i.e., $mx=m'x$. For any $a\in M$, $(am)x=a(mx)=a(m'x)=(am')x$, so $am\sim_x am'$. Thus $\sim_x$ is a left congruence.
\end{proof}

\begin{lem}\label{lem:sim_mono}
  Let $M$ be a commutative monoid,
  $X$ an $M$-set,
  $x \in X$,
  and $m \in M$.
  Then ${\sim}_x \subseteq {\sim}_{mx}$.
\end{lem}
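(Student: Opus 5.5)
The claim is a direct computation from the definition of $\sim_x$ in \zcref{defn:sim_x}, using commutativity to move $m$ past the other monoid elements. I will take an arbitrary pair $(a,a')$ with $a \sim_x a'$ and show $a \sim_{mx} a'$.

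By definition, $a \sim_x a'$ means $ax = a'x$. Applying $m$ to both sides gives $m(ax) = m(a'x)$, that is, $(ma)x = (ma')x$. Since $M$ is commutative, $ma = am$ and $ma' = a'm$, so this becomes $(am)x = (a'm)x$. By the action axiom this is $a(mx) = a'(mx)$, which is exactly $a \sim_{mx} a'$.

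Equivalently, \zcref{lem:congruence} says $\sim_x$ is a congruence when $M$ is commutative. So $a \sim_x a'$ gives $am \sim_x a'm$, which unwinds to the same identity. There is no real obstacle. The only point needing care is that commutativity is used to swap $ma$ with $am$; for a general monoid only $(ma)x = (ma')x$ would follow, and the inclusion could fail.
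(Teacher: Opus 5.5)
Your proof is correct and is the same one-line computation the paper gives: from $nx = n'x$, apply $m$ and use commutativity to get $n(mx) = n'(mx)$. Your closing remark is also right: without commutativity the inclusion can fail, for instance when stabilizers are conjugated in a nonabelian group action.
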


\begin{proof}
  If $nx = n'x$,
  then $nmx = mnx = mn'x = n'mx$. 
\end{proof}

\begin{prop}
  Let $M$ be a finitely generated commutative monoid,
  and let $X$ be a transitive $M$-set.
  Then there is some $x \in X$ such that for all $x' \in X$,
  we have ${\sim}_{x'} \subseteq {\sim}_x$.
\end{prop}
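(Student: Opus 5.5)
The plan is to use Rédei's theorem (\zcref{thm:Redei}) to pick a maximal congruence of the form $\sim_x$, and then use transitivity together with \zcref{lem:sim_mono} to show that this maximal one contains every other.

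First, by \zcref{lem:congruence}, each $\sim_{x'}$ with $x' \in X$ is a congruence on $M$. Since $M$ is Noetherian, every nonempty family of congruences on $M$ has a maximal element. Indeed, otherwise we could build a strictly ascending infinite chain by dependent choice. Apply this to the family $\{\sim_{x'} : x' \in X\}$, which is nonempty since a transitive $M$-set has exactly one orbit and hence is nonempty. This gives some $x \in X$ such that $\sim_x$ is maximal among the relations $\sim_{x'}$.

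Next we show $\sim_{x'} \subseteq \sim_x$ for an arbitrary $x' \in X$. Since $X$ is transitive and $M$ is commutative, $x$ and $x'$ lie in the same orbit, so there are $m, m' \in M$ with $mx = m'x'$. Call this common point $y$. By \zcref{lem:sim_mono}, we get $\sim_x \subseteq \sim_{mx} = \sim_y$ and $\sim_{x'} \subseteq \sim_{m'x'} = \sim_y$. Maximality of $\sim_x$ forces $\sim_x = \sim_y$. Hence $\sim_{x'} \subseteq \sim_y = \sim_x$, as required.

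The only point needing care is the passage from the ascending chain condition to the existence of a maximal element in the family. This is standard: if no member were maximal, repeatedly choosing a strictly larger member would produce an infinite strictly ascending chain of congruences, contradicting Noetherianity. The rest is immediate from the lemmas already proved.
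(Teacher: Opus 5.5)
Your proof is correct and follows essentially the same route as the paper: Rédei's theorem gives a maximal $\sim_x$, and transitivity together with \zcref{lem:sim_mono} shows the family is directed, so that maximal element is a maximum. Your extra remarks on nonemptiness of $X$ and on extracting a maximal element from the ascending chain condition only make explicit what the paper leaves implicit.
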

\begin{proof}
  By \zcref{lem:congruence},
  since $M$ is commutative,
  each $\sim_x$ is a congruence on $M$. 
  Since $M$ is Noetherian by \zcref{thm:Redei}, 
  the poset of congruences $\{{\sim}_x : x \in X\}$ under inclusion
  has a maximal element.
  We claim this poset is directed,
  so the maximal element is a maximum.
  Indeed, given $x,x' \in X$, by transitivity there are $m,m' \in M$
  such that $mx = m'x' =: y$,
  and then $\sim_x,\sim_{x'} \subseteq \sim_y$ by \zcref{lem:sim_mono}. 
\end{proof}

\begin{prop}
  Let $M$ be a finitely generated commutative monoid,
  and let $X$ be a transitive $M$-set.
  The set $Y := \{x \in X \mid \textnormal{${\sim}_x$ is maximum}\}$
  is $M$-invariant.
\end{prop}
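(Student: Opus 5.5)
The plan is to combine \zcref{lem:sim_mono} with the maximality of the congruence at points of $Y$. By the previous proposition, the poset $\{{\sim}_{x'} : x' \in X\}$ has a maximum element, which we denote ${\sim}_{\max}$. In this notation $Y = \{x \in X : {\sim}_x = {\sim}_{\max}\}$, and the previous proposition tells us that $Y$ is nonempty.

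Now let $x \in Y$ and $m \in M$; we must show $mx \in Y$. By \zcref{lem:sim_mono}, which applies because $M$ is commutative, we have ${\sim}_x \subseteq {\sim}_{mx}$. Since $mx \in X$, maximality of ${\sim}_{\max}$ gives ${\sim}_{mx} \subseteq {\sim}_{\max}$. Combining these with ${\sim}_x = {\sim}_{\max}$ yields the chain ${\sim}_{\max} = {\sim}_x \subseteq {\sim}_{mx} \subseteq {\sim}_{\max}$. Hence ${\sim}_{mx} = {\sim}_{\max}$, so $mx \in Y$. As $x \in Y$ and $m \in M$ were arbitrary, $MY \subseteq Y$, i.e.\ $Y$ is $M$-invariant.

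There is no real obstacle in this argument. The one point needing care is to invoke the maximum supplied by the previous proposition, which is where R\'edei's theorem was used, rather than only a maximal element. With a mere maximal element, the inclusion ${\sim}_{mx} \subseteq {\sim}_{\max}$ would not be available, and the squeeze in the chain above would fail.
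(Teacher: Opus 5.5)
Your argument is correct and is the paper's proof: the paper just says the claim is immediate from \zcref{lem:sim_mono}, and you have written out the squeeze ${\sim}_x \subseteq {\sim}_{mx} \subseteq {\sim}_x$. Your closing remark needs one correction: mere maximality of ${\sim}_x$ would also suffice, since ${\sim}_x \subseteq {\sim}_{mx}$ with ${\sim}_x$ maximal already forces ${\sim}_{mx} = {\sim}_x$, and invariance does not even need the maximum to exist (otherwise $Y$ would simply be empty).
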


\begin{proof}
    Immediate from \zcref{lem:sim_mono}. 
\end{proof}

\begin{defn}
  Let $M$ be a finitely generated commutative monoid.
  Let $X$ be an $M$-set. Call a point $x \in X$ \emphd{stable}
  if $\sim_x$ is maximum among $\{{\sim}_{y} \mid y \mathrel{E}^X_M x\}$. Let $\cS(X) \subseteq X$ denote the set of stable points.  
\end{defn}

By the previous two propositions,
$\cS(X)$ is an $M$-invariant complete section for $E_M^X$. 
The following refinement of this will be used in \zcref{sec:ssi},
but it is not needed for the proof of \zcref{thm:main}. 
For $A$ a set, we write $B \Subset A$ to denote that $B$ is a finite subset of $A$. 

\begin{prop}\label{prop:stable_born}
  Let $M$ be a finitely generated commutative monoid,
  let $X$ be an $M$-set,
  let $Y \subseteq X$ be an $M$-invariant complete section of $E_M^X$,
  and let $R \Subset M$ be a finite generating set with lcm $\hat r$.
  \begin{enumerate}[label=(\arabic*)]
    \item
      For every $x \in X$,
      there is some $k \in \N$ such that $\hat r^kx \in Y$.
    \item
      For $x \in X$,
      let $t(x) \in \N$ be the least such $k$. 
      If $x \in X$ and $x' \in Rx$,
      then $|t(x) - t(x')| \le 1$.
  \end{enumerate}
\end{prop}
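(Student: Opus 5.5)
Both parts come from two facts: the lcm $\hat r$ of a generating set absorbs every element of $M$ up to a power, and $Y$ is $M$-invariant.

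\textbf{Part (1).} Fix $x\in X$. Since $Y$ is a complete section, some $y\in Y$ satisfies $y \mathrel{E^X_M} x$. Because $M$ is commutative, there are $m,m'\in M$ with $mx=m'y$. By $M$-invariance, $m'y\in Y$, so $mx\in Y$.

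Next we show that $m$ divides a power of $\hat r$. Write $m=r_1\cdots r_k$ with $r_i\in R$. By the definition of an lcm of $R$, for each $r\in R$ there is $s_r\in R$ with $r s_r=\hat r$. Using commutativity,
\[
  \hat r^k = \prod_i (r_i s_{r_i}) = m\cdot \prod_i s_{r_i}.
\]
Hence $\hat r^k x = \bigl(\prod_i s_{r_i}\bigr) m x\in Y$, again by $M$-invariance.

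\textbf{Part (2).} Let $x'=rx$ with $r\in R$, and pick $s\in R$ with $rs=\hat r$. We bound the difference in both directions.

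\begin{itemize}
  \item First, $\hat r^{t(x)}x' = r\,\hat r^{t(x)}x\in Y$ by invariance, so $t(x')\le t(x)$.
  \item Second, $\hat r^{t(x')+1}x = s\,\hat r^{t(x')}(rx)=s\,\hat r^{t(x')}x'\in Y$, so $t(x)\le t(x')+1$.
\end{itemize}

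Together these give $t(x')\le t(x)\le t(x')+1$, hence $|t(x)-t(x')|\le 1$.

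Here $x'\in Rx$ means $x'=rx$ for some $r\in R$, which is the one-directional Schreier edge as stated. Neither part has a real obstacle. The only point needing care is that the lcm property is used for elements of $R$ itself, which is exactly what the definition supplies.
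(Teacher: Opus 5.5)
Your proof is correct and matches the paper's argument essentially step for step. In (1) you complement each generator factor of $m$ to $\hat r$ and use $M$-invariance of $Y$; in (2) you get $t(x') \le t(x)$ from invariance and $t(x) \le t(x')+1$ from the cofactor $s$ with $rs = \hat r$.
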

\begin{proof}
  \leavevmode
  \begin{enumerate}[label=(\arabic*)]
    \item
      Since $Y$ is a complete section,
      let $y \in Y$ be in the same orbit as $x$.
      There are $m,m' \in M$ such that $mx = m'y$.
      Write $m = \prod_{i < k} r_i$ for $k \in \N$ and $r_i \in R$.
      For each $i$, let $r_i' \in M$ with $r_i r_i' = \hat r$.
      Then
      \[
        \hat r^k x
        = \qty(\prod_{i < k} r_i' \cdot  \prod_{i < k} r_i)x
        \in M(mx)
        = M(m'y)
        \subseteq MY
        \subseteq Y.
      \]
    \item
      Since
      $\hat r^{t(x)} x' \in \hat r^{t(x)} R x \subseteq RY \subseteq Y$,
      it follows that $t(x') \le t(x)$.
      On the other hand,
      let $r,r' \in R$ with $x' = rx$ and $rr' = \hat r$.
      Then $\hat r^{t(x')+1} x = r'\hat r^{t(x')} x' \in r'Y \subseteq Y$,
      so $t(x) \le t(x') + 1$.
\end{enumerate}
\end{proof}

\begin{defn}\label{defn:go_to_stable}
  Let $M$ be a finitely generated commutative monoid,
  $X$ an $M$-set,
  and $R \Subset M$ a finite generating set with lcm $\hat r$.
  Let $t: X \to \N$ be defined as in \zcref{prop:stable_born} for $Y = \cS(X)$. 
  Let $T : X \to \cS(X)$ be defined by $T(x) = \hat r^{t(x)} x$. 
\end{defn}

The useful thing about $\cS(X)$ is that
it gives us free actions of certain quotients of our commutative monoid.
Recall the following from the introduction.

\begin{defn}
  Let $M$ be a monoid and $X$ an $M$-set.
  A point $x \in X$ is called \emphd{free}
  if $\sim_x$ is the identity relation on $M$.
  That is, for all $m,m' \in M$, $mx = m'x$ if and only if $m = m'$. 
  Let $\cF(X) := \{x \in X \mid \textnormal{$x$ is free}\}$.
  We call $X$ (or the action) \emphd{free} if $X = \cF(X)$. 
\end{defn}

Note that $M$ is right-cancellative iff there is a nonempty free $M$-set:
on one hand,
if $M$ is right-cancellative,
then $M$ itself is a free $M$-set,
and on the other hand,
if $X$ is a free $M$-set and $x \in X$, 
then for every $m, m', r \in M$,
if $mr = m'r$,
then $mrx = m'rx$,
so $m = m'$ by freeness.

\begin{lem}\label{lem:action_descend}
  Let $M$ be a commutative monoid and $X$ an $M$-set.
  Let $x \in X$ and $\sim$ a congruence on $M$. 
  \begin{enumerate}[label=(\arabic*)]
    \item
      If $\sim \subseteq \sim_x$,
      then the action $M \car Mx$ descends to an action of $M/{\sim}$.
    \item
      \label{item:lem:action_descend_free}
      If $\sim = \sim_x$,
      then $x$ is free with respect to the action $M/{\sim} \car Mx$. 
  \end{enumerate}
\end{lem}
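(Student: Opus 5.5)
The plan is to define the action on $Mx$ directly and check that it is well defined. Set $[m]\cdot y := my$ for $y \in Mx$. Since $Mx$ is $M$-invariant, $my \in Mx$, so the only issue is independence of the representative. Suppose $m \sim m'$ and $y \in Mx$, and write $y = ax$. Then commutativity gives
\[
  my = max = amx \quad\text{and}\quad m'y = am'x.
\]
Because $\sim \subseteq \sim_x$ we have $mx = m'x$, and hence $amx = am'x$; equivalently, this is \zcref{lem:sim_mono} applied to $y = ax$, giving $\sim \subseteq \sim_x \subseteq \sim_{ax}$. So the formula $[m]\cdot y = my$ is independent of the representative $m$.

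The action axioms then come for free from the action of $M$. Indeed $[1]\cdot y = y$, and
\[
  [m][n]\cdot y = [mn]\cdot y = mny = [m]\cdot([n]\cdot y).
\]
The quotient multiplication is well defined because $\sim$ is a congruence, so these identities make sense.

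For part \zcref{item:lem:action_descend_free}, take $\sim = \sim_x$, which is a congruence by \zcref{lem:congruence}. Suppose $[m]\cdot x = [m']\cdot x$. This means $mx = m'x$, so $m \sim_x m'$, and therefore $[m] = [m']$ in $M/{\sim_x}$. Hence $\sim_x$ computed for the quotient action is the identity relation, i.e.\ $x$ is free. Nothing here is a genuine obstacle; the only point needing care is using commutativity in the well-definedness step, that is, passing from $\sim_x$ to $\sim_{ax}$.
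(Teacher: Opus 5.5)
Your proposal is correct and matches the paper's proof: well-definedness on $Mx$ comes from $\sim \subseteq \sim_x \subseteq \sim_{ax}$ via \zcref{lem:sim_mono}, and freeness of $x$ is exactly the inclusion $\sim_x \subseteq \sim$. The only addition is that you verify the action axioms explicitly, which the paper leaves implicit.
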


\begin{proof}
  \leavevmode
  \begin{enumerate}[label=(\arabic*)]
    \item
      Let $y \in Mx$.
      By \zcref{lem:sim_mono},
      ${\sim} \subseteq {\sim}_y$.
      We need to show that if $m \sim m'$ then $my = m'y$,
      which is exactly what the previous inclusion says.
    \item
      We need to show that if $mx = m'x$ then $m = m'$ in $M/\sim$.
      This is exactly the inclusion $\sim_x \subseteq \sim$. 
  \end{enumerate}
\end{proof}

\begin{cor}\label{cor:free_action_on_stable}
  Let $M$ be a finitely generated commutative monoid and $X$ a transitive $M$-set.
  Let $\sim = \sim_x$ for some,
  equivalently any,
  $x \in \cS(X)$. 
  The action $M \car \cS(X)$ descends to a free action of $M/\sim$. 
\end{cor}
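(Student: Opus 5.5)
The plan is to reduce the corollary to \zcref{lem:action_descend}, applied at a single stable point, once we check that all stable points share the same congruence.

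First we show that $\sim_x$ does not depend on the choice of $x \in \cS(X)$. Since $X$ is transitive, all points lie in one orbit. By the first proposition of this section, the family $\{{\sim}_y : y \in X\}$ has a maximum. A point $x$ is stable exactly when ${\sim}_x$ equals this maximum. Hence any two $x, x' \in \cS(X)$ satisfy ${\sim}_x = {\sim}_{x'}$. We call this common congruence $\sim$; it is a congruence by \zcref{lem:congruence}. The set $\cS(X)$ is nonempty because the maximum is attained.

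Next we show that the action descends. Fix any $x \in \cS(X)$, so that ${\sim} = {\sim}_x$. For $y \in \cS(X)$ and $m \sim m'$, we have ${\sim} = {\sim}_y$, so $my = m'y$. Moreover $\cS(X)$ is $M$-invariant by the second proposition. Therefore
\[
  [m]\cdot y := my
\]
is a well-defined action of $M/{\sim}$ on $\cS(X)$. It is an action because the monoid structure on $M/{\sim}$ is induced from $M$. Alternatively, one can apply \zcref{lem:action_descend}(1) to each orbit piece $My \subseteq \cS(X)$ with $y$ stable. These pieces cover $\cS(X)$, and the descended actions on them agree on overlaps, since both are given by $[m]\cdot z = mz$.

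Finally we check freeness. Take $y \in \cS(X)$ and suppose $[m]y = [m']y$, that is, $my = m'y$. Then $m \sim_y m'$. Since $\sim_y = \sim$, this means $[m] = [m']$. This is \zcref{lem:action_descend}\zcref{item:lem:action_descend_free} applied at $y$. Since $y \in \cS(X)$ was arbitrary, every point of $\cS(X)$ is free, so the action of $M/{\sim}$ on $\cS(X)$ is free.

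No step is a genuine obstacle. The only point needing care is that freeness must hold at every point of $\cS(X)$, not just at the base point. This is why we use that $\sim_y$ is the same maximum for all stable $y$, rather than only the inclusion ${\sim}_x \subseteq {\sim}_y$ from \zcref{lem:sim_mono}.
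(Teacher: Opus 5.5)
Your proposal is correct and is essentially the paper's argument. The paper's proof notes that ${\sim}_x = {\sim}$ for every $x \in \cS(X)$, so the hypothesis of \zcref{lem:action_descend}\zcref{item:lem:action_descend_free} holds at every stable point; this yields both the descended action and freeness, exactly as you verify. Your extra checks (stable points share the maximum congruence, $\cS(X)$ is nonempty and $M$-invariant, and the pieces glue consistently) make explicit what the paper leaves implicit.
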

\begin{proof}
  By hypothesis,
  assumption \zcref{item:lem:action_descend_free} of \zcref{lem:action_descend}
  holds for every $x \in \cS(X)$. 
\end{proof}

\section{Local algorithms on monoids}
\label{sec:local_algorithms}

Linial's deterministic $\LOCAL$ model \cite{linial} is a model of distributed computing. In it, one pictures a graph as a network of computers where each can communicate with its neighbors, and measures the number of rounds of communication needed to solve some combinatorial problem on the graph. 
Bernshteyn showed that there are deep connections between the $\LOCAL$ model and descriptive combinatorics \cite{Ber23b, Ber23a}. 

In this section we formalize a version of the deterministic $\LOCAL$ model tailored to monoids and monoid actions and develop the basic theory of it, including its connection to the descriptive combinatorics of monoid actions in the spirit of Bernshteyn's work. 
The fundamental units of this theory are the following procedures for transforming data locally. 

\begin{defn}
  Let $M$ be a monoid and let $R \Subset M$.
  An \emphd{$R$-local algorithm} is a function
  \[A : \Sigma^R \to \Lambda,\]
  where $\Sigma$ and $\Lambda$ are finite sets.
  The set $\Sigma$,
  also denoted $\Sigma_A$,
  is the \emphd{set of input labels},
  and the set $\Lambda$,
  also denoted $\Lambda_A$,
  is the \emphd{set of output labels}.
  The set $R$,
  also denoted $R_A$,
  is the \emphd{locality} of the local algorithm.

  Given $S \subseteq M$,
  an \emphd{$S$-local algorithm}
  is an $R$-local algorithm for some $R \Subset S$.
\end{defn}

The final sentence of this definition may appear to introduce some ambiguity in the case $S \Subset M$, but see \zcref{rmk:bigger_locality}. 

We can compose local algorithms:

\begin{defn}\label{def:compose}
  Let $M$ be a monoid,
  and let $A : \Sigma^R \to \Lambda$
  and $B : \Lambda^S \to \Delta$
  be local algorithms.
  We define the $RS$-local algorithm
  $B*A : \Sigma^{RS} \to \Delta$
  by \[(B*A)(l) = B(s \mapsto A(r \mapsto l(rs))).\]
\end{defn}

We can run a local algorithm at a point in a labeled $M$-set.
We do not require that the $M$-set be free.
\begin{defn}
  Let $M$ be a monoid,
  and let $X$ be an $M$-set.
  Given an $M$-local algorithm $A$
  and a function $l : X \to \Sigma_A$,
  define the function
  \(
    A * l : X \to \Lambda
  \) by 
  \[
    x \mapsto A(r \mapsto l(rx))
  \]
\end{defn}

\begin{rmk}\label{rmk:bigger_locality}
  Let $M$ be a monoid and $R \subseteq R' \Subset M$. 
  Let $A : \Sigma^{R} \to \Lambda$ be an $R$-local algorithm. 
  Let $A' : \Sigma^{R'} \to \Lambda$ be the $R'$-local algorithm defined by $A'(l) = A(l \upharpoonright R)$. 
  Then for any $M$-set $X$ and labeling
  $l: X \to \Sigma$, $A' * l = A * l$.
  In this way we will sometimes implicitly consider an $R$-local algorithm to also be an $R'$-local algorithm when $R \subseteq R' \Subset M$. 
\end{rmk}

\begin{defn}
  Let $M$ be a monoid.
  An \emphd{$M$-LCL} is an $M$-local algorithm
  whose set of output labels is $2$.
  We often identify an LCL $\Pi$ with the set $\Pi^{-1}(\{1\})$.
\end{defn}

LCL stands for \emphd{locally checkable labeling problem}. 
We think of an LCL as a set of local constraints which we might like some labeling to satisfy.  
By running an LCL on a labeled $M$-set,
we can check whether it does in fact satisfy these constraints. 

\begin{defn}
  Let $M$ be a monoid,
  let $X$ be an $M$-set,
  let $\Pi$ be an $M$-LCL,
  and let $l : X \to\Sigma_\Pi$.
  Given $x \in X$,
  we say that $l$ is a \emphd{$\Pi$-labeling at $x$}
  if
  \[
    (\Pi * l)(x) = 1.
  \]
  For $Y \subseteq X$, 
  we say that $l$ is a \emphd{$\Pi$-labeling of $Y$}
  if it is a $\Pi$-labeling at every $x \in Y$.
\end{defn}

One particular LCL of interest involves injective labeling:
\begin{defn}
  Let $M$ be a monoid,
  let $R \Subset M$,
  and let $n \in \N$.
  The LCL $\Pi_{R \hra n} : n^R \to 2$
  is defined by
  \[
    \Pi(l) = 1
    \iff
    \text{$l$ is injective}.
  \]
\end{defn}

The following is a natural notion of reduction between LCLs in this setting. 

\begin{defn}
  Let $M$ be a monoid,
  let $\Pi$ and $\Pi'$ be $M$-LCLs,
  and let $R \Subset M$.
  An \emphd{$R$-local reduction from $\Pi'$ to $\Pi$}
  is an $R$-local algorithm
  \[
    A : (\Sigma_\Pi)^R \to \Sigma_{\Pi'}
  \]
  such that for every $l: M \to \Sigma_\Pi$
  which is a $\Pi$-labeling of $R$,
  we have that $A*l$ is a $\Pi'$-labeling at $1$.
  
  That is, if $\Pi * l = 1$ on $R$, then
  \[
    (\Pi' * A * l)(1) = 1.
  \]
\end{defn}

A reduction from $\Pi'$ to $\Pi$ allows us to locally transform $\Pi$-labelings into $\Pi'$-labelings as recorded by the next lemma. 

\begin{lem}\label{lem:apply_reduction}
  Let $M$ be a monoid,
  let $\Pi$ and $\Pi'$ be $M$-LCLs,
  and let $R \Subset M$.
  Let $A$ be an $R$-local reduction from $\Pi'$ to $\Pi$.
  Let $X$ be an $M$-set, let $x \in X$, and let $l : X \to \Sigma_\Pi$ such that $l$ is a $\Pi$-labeling of $Rx$.
  Then $A * l$ is a $\Pi'$-labeling at $x$.
\end{lem}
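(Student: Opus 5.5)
The idea is to pull the labeling on $X$ back to the monoid $M$ along the orbit map at $x$, and then use the definition of a local reduction, which only talks about labelings of $M$. Define $\tilde l : M \to \Sigma_\Pi$ by $\tilde l(m) := l(mx)$. The key identity is that running an $M$-local algorithm on $\tilde l$ at a point $m$ gives the same result as running it on $l$ at $mx$. Explicitly, for any $M$-local algorithm $B$ with input labels $\Sigma_\Pi$,
\[
  (B * \tilde l)(m) = B(s \mapsto \tilde l(sm)) = B(s \mapsto l(smx)) = (B * l)(mx).
\]
This holds with no freeness assumption, because $\tilde l$ is defined on $M$ rather than on $X$. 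If $M$ acts on itself by left multiplication, then $\tilde l(sm) = l((sm)x) = l(s(mx))$ by the action axiom.

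First, apply this identity with $B = \Pi$. For every $r \in R$ it gives $(\Pi * \tilde l)(r) = (\Pi * l)(rx) = 1$, using the hypothesis that $l$ is a $\Pi$-labeling of $Rx$. So $\tilde l$ is a $\Pi$-labeling of $R \subseteq M$. Since $A$ is an $R$-local reduction from $\Pi'$ to $\Pi$, the definition yields $(\Pi' * A * \tilde l)(1) = 1$.

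Second, transfer this conclusion back to $X$. Apply the identity with $B = A$: for every $m$, $(A * \tilde l)(m) = (A * l)(mx)$. In other words, $A * \tilde l$ is precisely the pullback of $A * l$ along $m \mapsto mx$. Now apply the identity a second time, with $B = \Pi'$ and the labeling $A * l$ in place of $l$. At $m = 1$ this gives
\[
  (\Pi' * (A * \tilde l))(1) = (\Pi' * (A * l))(x).
\]
The left side equals $1$, so $A * l$ is a $\Pi'$-labeling at $x$, as required.

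There is no real obstacle here. The only point needing care is the order of multiplication in the convention $(A * l)(x) = A(r \mapsto l(rx))$ and in the composition $B * A$: one must check that $\tilde l(sm)$ corresponds to $l(s(mx))$, which is exactly the action axiom $(sm)x = s(mx)$. Commutativity of $M$ is not needed.
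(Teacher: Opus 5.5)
Your proposal is correct and follows the paper's proof: you pull $l$ back to $\tilde l(m) = l(mx)$ on $M$, check that $\tilde l$ is a $\Pi$-labeling of $R$, apply the definition of a reduction at $1$, and transfer the conclusion back to $x$. The paper compresses that last transfer into ``the same calculation shows''; you spell it out via $A * \tilde l = \widetilde{A * l}$, which is exactly what is needed.
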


\begin{proof}
  Define $l_x : M \to \Sigma_\Pi$ by $l_x(m) = l(mx)$. 
  For any $r \in R$,
  \[
    (\Pi * l_x)(r)
    = \Pi(s \mapsto l_x(sr))
    = \Pi(s \mapsto l(srx))
    = (\Pi * l)(rx)
    = 1,
  \]
  so $(\Pi' * A * l_x)(1) = 1$.
  The same calculation shows $(\Pi' * A * l)(x) = (\Pi' * A * l_x)(1)$,
  so we are done.
\end{proof}

We can compose reductions:

\begin{lem}\label{lem:compose_reduction}
    Let $M$ be a monoid.
    Let $\Pi,\Pi',\Pi''$ be $M$-LCLs. 
    Let $R,R' \Subset M$. 
    Suppose $A$ is an $R$-local reduction from $\Pi'$ to $\Pi$ 
    and 
    $A'$ is an $R'$-local reduction from $\Pi''$ to $\Pi'$. 
    Then $A' * A$ is an $RR'$-local reduction from $\Pi''$ to $\Pi$.
\end{lem}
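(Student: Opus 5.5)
The plan is to reduce to \zcref{lem:apply_reduction}, applied to the $M$-set $M$ itself. Fix $l : M \to \Sigma_\Pi$ which is a $\Pi$-labeling of $RR'$ (as a subset of $M$, acted on by left multiplication). We must show that $(A' * A) * l$ is a $\Pi''$-labeling at $1$.

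The first step is an associativity identity: for every $M$-set $X$ and every labeling $l$ of $X$,
\[
  (A' * A) * l = A' * (A * l).
\]
To check it, unfold the definitions. The left side at $x$ is $(A'*A)(t \mapsto l(tx))$ for $t \in RR'$, which equals
\[
  A'\bigl(s \mapsto A(r \mapsto l(rsx))\bigr).
\]
The right side at $x$ is $A'(s \mapsto (A*l)(sx)) = A'(s \mapsto A(r \mapsto l(rsx)))$. These agree. The one point needing care is that \zcref{def:compose} is indexed by the set $RS$, so a single element $t$ may be written as $rs$ in several ways. This causes no problem, because the value $l(rsx)$ depends only on the product $rs$.

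Next, set $l' := A * l$. For each $r' \in R'$ we have $Rr' \subseteq RR'$, so $l$ is a $\Pi$-labeling of $Rr'$. Applying \zcref{lem:apply_reduction} with $X = M$ and $x = r'$ shows that $l'$ is a $\Pi'$-labeling at $r'$. Hence $l'$ is a $\Pi'$-labeling of $R' = R'\cdot 1$.

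Then apply \zcref{lem:apply_reduction} once more, to $A'$ at $x = 1$. This shows $A' * l'$ is a $\Pi''$-labeling at $1$. By the associativity identity, $A' * l' = (A'*A)*l$, which is exactly what the definition of an $RR'$-local reduction requires.

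The main obstacle is keeping the multiplication order straight. In \zcref{def:compose} the input is read at $rs$ with $r \in R$ on the left, and this must match the left action $r(sx)$; it does, so the locality comes out as $RR'$, as claimed. The rest is routine unfolding of definitions.
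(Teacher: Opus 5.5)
Your proposal is correct and follows the paper's argument. The paper likewise reduces to showing that $A*l$ is a $\Pi'$-labeling of $R'$, by applying \zcref{lem:apply_reduction} at each $r' \in R'$ using $Rr' \subseteq RR'$; you additionally make explicit the identity $(A'*A)*l = A'*(A*l)$, which the paper leaves implicit.
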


\begin{proof}
    Let $l : M \to \Sigma_\Pi$ be a $\Pi$-labeling of $RR'$. 
    It suffices to show that $A * l$ is a $\Pi'$-labeling of $R'$. 
    Let $r' \in R'$, so that we need to show $A * l$ is a $\Pi'$-labeling at $r'$. Since $Rr' \subseteq RR'$, we are done by \zcref{lem:apply_reduction}. 
\end{proof}

The discussion so far has been about labeled $M$-sets.
To produce a version of the $\LOCAL$ model, we need to explain what it means for a local algorithm to ``solve'' an LCL on an $M$-set which may not have an input labeling. 
The choice made in this situation by Linial was to work with graphs $G$ with vertices labeled by an arbitrary injection $V(G) \hookrightarrow |V(G)| \in \N$.
One then measures the locality of algorithms which produce $\Pi$-labelings from such labelings as a function of $|V(G)|$.

Since in a local algorithm each vertex will only ``see'' some bounded size neighborhood of itself, it is usually enough to assume only that these neighborhoods (rather than the entire graph) are injectively labeled. 
This brings us to the next definition. 

\begin{defn}
  Let $M$ be a monoid,
  let $\Pi$ be an LCL on $M$,
  and let $R \Subset M$.
  The \emphd{$R$-round complexity of $\Pi$}
  is the function
  \[
    \rounds_{\Pi, R} : \N \to \N \cup \{\infty\}
  \]
  defined as follows: 
  for $n \in \N$,
  $\rounds_{\Pi, R}(n)$ is the least $t \in \N$,
  if it exists,
  such that there is an $R^t$-local reduction
  from $\Pi$ to $\Pi_{R^t \hra n}$. 
  If no such $t$ exists, we set
  $\rounds_{\Pi,R}(n)=\infty$.

  For $f : \N \to \N \cup \{\infty\}$, 
  we say $\Pi \in \LOCAL_R(f)$ if
  \[
    \rounds_{\Pi, R}(n) \le f(n) \qquad \text{for all $n \in \N$.}
  \]
\end{defn}

\begin{lem}\label{lem:rounds_mono}
  Let $M$ be a monoid,
  and let $\Pi$ be an LCL on $M$.
  Let $R \subseteq R' \Subset M$ and $n' \le n \in \N$. 
  Then
  \[
    \rounds_{\Pi,R'}(n') \le \rounds_{\Pi,R}(n).
  \]
\end{lem}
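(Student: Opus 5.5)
If $\rounds_{\Pi,R}(n)=\infty$ there is nothing to prove. Otherwise set $t := \rounds_{\Pi,R}(n)$ and let $A : n^{R^t} \to \Sigma_\Pi$ be an $R^t$-local reduction from $\Pi$ to $\Pi_{R^t \hra n}$. The goal is to manufacture from $A$ an $R'^t$-local reduction from $\Pi$ to $\Pi_{R'^t \hra n'}$. Since $R \subseteq R'$, we have $R^t \subseteq R'^t$, and since $n' \le n$ we may regard $n' \subseteq n$. Define $A' : (n')^{R'^t} \to \Sigma_\Pi$ by $A'(l) = A(l \upharpoonright R^t)$, where $l \upharpoonright R^t$ is viewed as taking values in $n$. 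This is the construction of \zcref{rmk:bigger_locality}, composed with the inclusion of label sets $n' \hookrightarrow n$.

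Next I verify that $A'$ is a reduction. Let $l : M \to n'$ be a $\Pi_{R'^t \hra n'}$-labeling of $R'^t$. This means that for every $r \in R'^t$, the map $s \mapsto l(sr)$ on $R'^t$ is injective. Regard $l$ as a map $M \to n$. For $r \in R^t \subseteq R'^t$, the map $s \mapsto l(sr)$ restricted to $R^t \subseteq R'^t$ is still injective, since a restriction of an injection is an injection. Hence $l$ is a $\Pi_{R^t \hra n}$-labeling of $R^t$. Because $A$ is a reduction, $A * l$ is a $\Pi$-labeling at $1$. Finally, $A' * l = A * l$ pointwise, because $A' * l(x) = A(r \mapsto l(rx))$ with $r$ ranging over $R^t$. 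So $A' * l$ is a $\Pi$-labeling at $1$, as required.

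It follows that $\rounds_{\Pi,R'}(n') \le t$. The only point needing care is the direction of the inclusions. The hypothesis needed in the new setting concerns the larger sets, injectivity on $R'^t$ at all points of $R'^t$, and this implies the old hypothesis on the smaller sets $R^t$. The label set shrinks from $n$ to $n'$, which is harmless because every $n'$-labeling is already an $n$-labeling. I do not expect any genuine obstacle; the argument is pure bookkeeping.
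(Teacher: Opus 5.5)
Your proof is correct and is essentially the paper's argument. The paper obtains the same algorithm $A'$ by composing $A$ with the $\{1\}$-local reduction induced by the inclusion $n' \hookrightarrow n$ (via \zcref{lem:compose_reduction}) and then enlarging the locality to $(R')^t$ via \zcref{rmk:bigger_locality}; you inline these two steps and check the reduction property directly.
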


\begin{proof}
    Let $t=\rounds_{\Pi,R}(n)$, and suppose $t<\infty$. 
    The inclusion $n' \hra n$ gives a $1$-local reduction from $\Pi_{R^t \hra n}$ to $\Pi_{(R')^t \hra n'}$.
    Therefore by \zcref{lem:compose_reduction} there is an $R^t$-local reduction from $\Pi$ to $\Pi_{(R')^t \hra n'}$, hence an $(R')^t$-local one by \zcref{rmk:bigger_locality}.
\end{proof}

\begin{lem}\label{lem:local_reduction_additive}
  Let $M$ be a monoid,
  and let $\Pi$ and $\Pi'$ be $M$-LCLs.
  Let $1 \in R \Subset M$.
  If $\Pi'$ has an $R^s$-local reduction to $\Pi$,
  then for all $n \in \N$, 
  $\rounds_{\Pi',R}(n) \le \rounds_{\Pi, R}(n) + s$. 
\end{lem}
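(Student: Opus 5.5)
Fix $n \in \N$ and let $t = \rounds_{\Pi,R}(n)$; if $t = \infty$ there is nothing to prove, so assume $t < \infty$. The plan is to compose reductions using \zcref{lem:compose_reduction}. We are given an $R^s$-local reduction $B$ from $\Pi'$ to $\Pi$, and by definition of $t$ there is an $R^t$-local reduction $A$ from $\Pi$ to $\Pi_{R^t \hra n}$. Then \zcref{lem:compose_reduction} makes $B * A$ an $R^t R^s = R^{t+s}$-local reduction from $\Pi'$ to $\Pi_{R^t \hra n}$.

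The remaining issue is that the round complexity of $\Pi'$ at $n$ requires a reduction to $\Pi_{R^{t+s} \hra n}$ rather than to $\Pi_{R^t \hra n}$. Since $1 \in R$, we have $R^t \subseteq R^{t+s}$. So the identity map $n \to n$, viewed as a $\{1\}$-local algorithm, is a reduction from $\Pi_{R^t \hra n}$ to $\Pi_{R^{t+s} \hra n}$: a labeling injective on $R^{t+s}$ is injective on $R^t$. This is exactly the argument in \zcref{lem:rounds_mono}, applied with $R^t \subseteq R^{t+s}$ in place of $R \subseteq R'$.

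Composing once more with \zcref{lem:compose_reduction} yields a $\{1\}R^{t+s} = R^{t+s}$-local reduction from $\Pi'$ to $\Pi_{R^{t+s} \hra n}$. Hence $\rounds_{\Pi',R}(n) \le t+s$. This step is also where the hypothesis $1 \in R$ is needed, since it is what gives $R^t \subseteq R^{t+s}$ and $\{1\} R^{t+s} = R^{t+s}$.

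The main point requiring care is the order of the sets in the composition. \zcref{lem:compose_reduction} produces locality $RR'$ with the first reduction's set on the left. In the general monoid setting $R^tR^s$ is still $R^{t+s}$, so there is no real obstacle, only bookkeeping to check that each composition lands in the correct power of $R$.
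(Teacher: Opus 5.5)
Your proof is correct and is essentially the paper's argument: compose the given $R^s$-local reduction with an optimal $R^t$-local reduction to $\Pi_{R^t \hra n}$ via \zcref{lem:compose_reduction}, then pass to $\Pi_{R^{t+s} \hra n}$ using $R^t \subseteq R^{t+s}$, which is where $1 \in R$ is needed. One small quibble: $\{1\}R^{t+s} = R^{t+s}$ holds in any monoid and does not depend on $1 \in R$.
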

\begin{proof}
  Let $t = \rounds_{\Pi,R}(n)$.
  By \zcref{lem:compose_reduction},
  $\Pi'$ has a $R^{t + s}$-local reduction to $\Pi_{R^t \hra n}$,
  hence to $\Pi_{R^{t+s} \hra n}$. 
\end{proof}

The following will be important
so we can shift the locality to the positive cone:

\begin{lem}\label{lem:locality_shift}
  Let $M$ be a monoid,
  let $\Pi$ be an $M$-LCL,
  let $R \Subset M$,
  and let $m \in M$ be such that $m$ commutes with every element of $R$
  and right multiplication by $m$ is injective. 
  Then
  \[
    \rounds_{\Pi, Rm} \le \rounds_{\Pi, R}.
  \]
  In particular this conclusion holds for every $m \in M$
  if $M$ is cancellative and commutative. 
\end{lem}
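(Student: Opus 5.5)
The plan is to take an optimal reduction for the locality $R$ and precompose it with a ``shift by $m^t$''. The shift turns an injective labeling adapted to $Rm$ into one adapted to $R$.

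Fix $n$ and let $t = \rounds_{\Pi,R}(n)$; if $t=\infty$ there is nothing to prove. Let $A : n^{R^t} \to \Sigma_\Pi$ be an $R^t$-local reduction from $\Pi$ to $\Pi_{R^t\hra n}$. The first observation uses only that $m$ commutes with $R$:
\[
  (Rm)^t = R^t m^t \quad\text{as subsets of } M,
\]
since $r_1 m r_2 m\cdots r_t m = r_1\cdots r_t m^t$. So an $(Rm)^t$-local algorithm is the same as an $R^tm^t$-local one. Define the $R^tm^t$-local algorithm
\[
  A'(l) := A\bigl(r \mapsto l(r m^t)\bigr), \qquad l \in n^{R^t m^t}.
\]
The claim will be that $A'$ is an $(Rm)^t$-local reduction from $\Pi$ to $\Pi_{(Rm)^t\hra n}$. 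This immediately gives $\rounds_{\Pi,Rm}(n)\le t$.

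To verify the claim, take $l : M \to n$ which is a $\Pi_{(Rm)^t\hra n}$-labeling of $(Rm)^t$, and set $l'(v) := l(vm^t)$. The main computation is to unwind Definition~\ref{def:compose} and show that $A'*l$ and $A*l'$ agree at the relevant points. Here the commutation of $m$ with $R$ lets one move $m^t$ past elements of $R^t$. It then suffices to check that $l'$ is a $\Pi_{R^t\hra n}$-labeling of $R^t$, because $A$ is a reduction. Concretely, for each $s\in R^t$ the map $r\mapsto l(rsm^t)=l(rm^ts)$ must be injective on $R^t$.

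This is where injectivity of right multiplication by $m$ (hence by $m^t$) enters. The map $r \mapsto r m^t s$ is the composite of $r\mapsto rs$ with right multiplication by $m^t$, which is injective. So distinct points of $R^t s$ stay distinct after shifting, and they land inside a translate of $R^tm^t$ on which the hypothesis gives injectivity of $l$.

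I expect the main obstacle to be exactly this bookkeeping of where the $m^t$ sits. A naive choice of base point only controls $l$ on $R^tm^t\cdot s m^t$, which is off by a factor $m^t$ from what is needed. I would first try to get the shifts to line up by choosing the base point $s m^t\in(Rm)^t$ together with the precomposition above. If that still leaves the factor unmatched, I would move the shift from the input side of $A$ to the output side. In that version one defines $l'$ by a left translate $l(m^t\,\cdot)$ and uses the $\Pi_{(Rm)^t\hra n}$ condition at the points of $R^tm^t$ directly.

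The final sentence of the lemma is then immediate. In a cancellative commutative monoid every $m$ commutes with everything, and right multiplication by $m$ is injective by cancellativity.
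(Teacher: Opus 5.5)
Your algorithm is the same as the paper's. Since $m^t$ commutes with $R^t$, your $A'(f)=A(r\mapsto f(rm^t))$ is exactly the composite $A*B$, where $B$ is the $\{m^t\}$-local identity on $n$. The gap is the step you flag yourself, and it cannot be closed by bookkeeping.

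To invoke $A$ you need, for each $s\in R^t$, that $r\mapsto l(rsm^t)$ is injective on $R^t$. That is, $l$ must be injective along the translate $R^tm^t\cdot s$. The hypothesis that $l$ is a $\Pi_{(Rm)^t\hra n}$-labeling of $(Rm)^t$ only gives injectivity of $u\mapsto l(uv)$ on $R^tm^t$ for $v\in R^tm^t$, i.e.\ along $R^ts'm^{2t}$ with $s'\in R^t$. Since $s\notin R^tm^t$ in general, your claim that the points ``land inside a translate of $R^tm^t$ on which the hypothesis gives injectivity'' is false.

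Neither fallback helps. Base point $sm^t$ again controls $R^tsm^{2t}$. The left translate $l(m^t\,\cdot)$ gives literally the same requirement, because $m^trs=rsm^t$. The left translate is still the better choice for a different reason: $A'*l=A*(B*l)$ holds in any monoid. By contrast, $A'*l=A*l'$ for your right translate needs $m^t$ to commute with the locality of $\Pi$, which is not assumed.

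The paper's one-line proof has the same hole. It claims $B$ is a $\{m^t\}$-local reduction from $\Pi_{R^t\hra n}$ to $\Pi_{R^tm^t\hra n}$. But validity of $l$ at $m^t$ only says $l$ is injective on $R^tm^{2t}$, while validity of $B*l$ at $1$ needs injectivity on $m^tR^t$.

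In fact the statement is false as written. Take $M=\N$ (written additively), $R=\{0,1\}$, $m=1$, $n=4$, and $\Pi$ proper $2$-coloring, $\Pi(c)=1\iff c(0)\neq c(1)$.
\begin{itemize}
\item \textbf{Locality $R^3=[0,3]$.} A valid $l$ has $(l_0,\dots,l_3)$ a permutation of $4$ with $l_4=l_0$. The rule ``parity of the position of $0$ in the window'' is then a reduction, since the position moves from $j$ to $j-1$, or from $0$ to $3$. Hence $\rounds_{\Pi,R}(4)\le 3$.
\item \textbf{Locality $(Rm)^t=[t,2t]$.} Validity only constrains $l$ on $[2t,4t]$, but the outputs at $0$ and $1$ read $l$ on $[t,2t+1]$. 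For $1\le t\le 3$, consider the windows $W_j=(a_j,\dots,a_{j+t})$, $j=0,1,2$, of $a_i=i\bmod 3$. For each $j$, set $l_{t+i}=a_{j+i}$ for $i\le t+1$ and extend greedily to a valid labeling on $[2t+2,4t]$. This forces a reduction to give $W_j$ and $W_{j+1}$ different colors, which is impossible with $2$ colors since $W_3=W_0$. The case $t=0$ fails trivially. Hence $\rounds_{\Pi,Rm}(4)=4$.
\end{itemize}

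What the shift argument does prove is a version with a doubled shift. For $S=Rm\cup Rm^2$ one has $\rounds_{\Pi,S}\le\rounds_{\Pi,R}$:
\begin{itemize}
\item Note $S^t\supseteq R^tm^t\cup R^tm^{2t}$.
\item Put $A'(f)=A(r\mapsto f(m^{2t}r))$, so that $A'*l=A*l''$ with $l''=l(m^{2t}\,\cdot)$.
\item For $s\in R^t$, use validity of $l$ at $v=sm^t\in S^t$ with $u=rm^t\in S^t$. Then $l(uv)=l''(rs)$.
\item Since $r\mapsto rm^t$ is injective on $R^t$, $l''$ is valid at $s$. This is where the injectivity hypothesis genuinely enters.
\end{itemize}
For the only application, \zcref{lem:rounds_monoid_vs_rounds_group}, take $m=\hat r$. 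Then $S\subseteq R^3$; note $1\in R$ automatically, as $\hat r s'=\hat r$ for some $s'\in R$ and $M$ is cancellative. This gives $\rounds_{\Pi,R}\le 3\rounds_{\Pi,R^{\pm1}}$, which is all that is used downstream.
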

\begin{proof}
  Let $n \in \N$ and $t = \rounds_{\Pi,R}(n)$.
  By the injectivity assumption,
  the identity function on $n$ is a $\{m^t\}$-local reduction 
  from $\Pi_{R^t \hra n}$ to $\Pi_{R^t m^t \hra n}$,
  so by \zcref{lem:compose_reduction},
  there is a $R^t m^t$-local reduction from $\Pi$ to $\Pi_{R^t m^t \hra n}$.
  By the commutativity assumption, $R^tm^t = (Rm)^t$, so we are done. 
\end{proof}

It is also important to note that changing the ambient monoid
does not change the complexity of an LCL. 

\begin{lem}
  Let $M \subseteq N$ be monoids.
  Let $\Pi$ be an LCL on $M$,
  viewed also an LCL on $N$,
  and $R \Subset M$.
  Then $\rounds_{\Pi,R}$ is the same whether computed in $M$ or $N$. 
\end{lem}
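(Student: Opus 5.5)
The plan is to show that, for each fixed $t$ and $n$, a local algorithm $A : n^{R^t} \to \Sigma_\Pi$ is an $R^t$-local reduction from $\Pi$ to $\Pi_{R^t \hra n}$ when computed in $M$ if and only if it is one when computed in $N$. Then the least such $t$ is the same in both monoids, which gives equality of $\rounds_{\Pi,R}(n)$ for every $n$. The key point is that the data of a local algorithm only involves finite subsets of $M$. Since $M$ is a submonoid of $N$, all products that occur are computed identically in $M$ and in $N$. So every condition in the definition of a reduction depends only on the restriction of a labeling to $M$.

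First, I note that every relevant locality lies in $M$: we have $R^t \subseteq M$ and $R_\Pi \subseteq M$, because $\Pi$ is an LCL on $M$. Next, I check that for a labeling $l : N \to n$, the hypothesis ``$l$ is a $\Pi_{R^t\hra n}$-labeling of $R^t$'' depends only on $l\restriction M$. Indeed, at a point $r \in R^t$ it asks that $s \mapsto l(sr)$ be injective on $s \in R^t$, and $sr \in M$. Likewise, unwinding the definitions,
\[
  (\Pi * A * l)(1) = \Pi\bigl(s \mapsto A(r \mapsto l(rs))\bigr), \qquad s \in R_\Pi,\ r \in R^t,
\]
and every argument $rs$ lies in $M$. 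Hence this also depends only on $l\restriction M$, and it equals the same expression computed for $l\restriction M$ viewed as a labeling of $M$.

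It remains to compare quantifiers. If $A$ is a reduction in $N$, take any $l : M \to n$ that is a $\Pi_{R^t \hra n}$-labeling of $R^t$. If $n \ge 1$, extend $l$ arbitrarily to $N$; the extension satisfies the same hypothesis, so its conclusion holds, and hence the conclusion holds for $l$. If $n = 0$, there are no functions $M \to 0$ (as $1 \in M$), so the condition is vacuous. Conversely, if $A$ is a reduction in $M$ and $l : N \to n$ satisfies the hypothesis, then $l \restriction M$ satisfies it as well, so the conclusion holds for $l\restriction M$ and therefore for $l$.

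I expect no real obstacle here. The only points needing care are the bookkeeping that every product appearing in the unwound formulas lands in $M$, and the degenerate case $n=0$ in the extension step.
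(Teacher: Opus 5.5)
Your proposal is correct and follows the same approach as the paper. The paper's proof only observes that $R^t \subseteq M$ for every $t$, so an $R^t$-local reduction is the same finite function in either monoid; you additionally spell out the restriction/extension check (including the degenerate case $n=0$) showing that the reduction property itself does not depend on the ambient monoid, which the paper leaves implicit.
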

\begin{proof}
  For every $t$, the locality $R^t$ is contained in $M$.
  Thus an $R^t$-local reduction is the same finite function
  whether the ambient monoid is $M$ or $N$.
\end{proof}

The previous two lemmas will be used in the following way:

\begin{lem}\label{lem:rounds_monoid_vs_rounds_group}
  Let $\Gamma$ be an abelian group and $M$ a submonoid. 
  Let $\Pi$ be an $M$-LCL.
  Let $R \Subset M$ have an lcm,
  and as usual define $R^{\pm1} \subseteq \Gamma$
  as the set of elements of $R$ and their inverses.
  Then
  \[
    \rounds_{\Pi,R^{\pm 1}}
    \le \rounds_{\Pi,R}
    \le 2 \rounds_{\Pi,R^{\pm 1}}.
  \]
\end{lem}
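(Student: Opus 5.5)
The first inequality follows from monotonicity. Since $R \subseteq R^{\pm1}$, \zcref{lem:rounds_mono} (applied in the ambient monoid $\Gamma$, with $n'=n$) gives $\rounds_{\Pi,R^{\pm1}}(n) \le \rounds_{\Pi,R}(n)$. This is legitimate because the previous lemma says $\rounds_{\Pi,R}$ is the same whether computed in $M$ or in $\Gamma$. There is one caveat: \zcref{lem:rounds_mono} presupposes $\rounds_{\Pi,R}$ is computed for an $M$-LCL inside the same monoid. So I will view $\Pi$ as a $\Gamma$-LCL throughout, via the inclusion $M \subseteq \Gamma$, and use the ambient-independence lemma to transfer back to $M$.

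For the second inequality, fix $n$ and let $t = \rounds_{\Pi,R^{\pm1}}(n)$, computed in $\Gamma$. There is an $(R^{\pm1})^t$-local reduction from $\Pi$ to $\Pi_{(R^{\pm1})^t\hra n}$. The plan is to shift this locality into $M$ using \zcref{lem:locality_shift} with $m = \hat r^t$, where $\hat r$ is the lcm of $R$; the hypotheses of that lemma hold since $\Gamma$ is cancellative and commutative. Concretely, I would redo the argument of \zcref{lem:locality_shift} by hand. Since each $r \in R$ has some $r' \in R$ with $rr' = \hat r$, we get $r^{-1}\hat r = r' \in R$, and trivially $r\hat r \in R\hat r$. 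Hence $R^{\pm1}\hat r \subseteq R\cup R\hat r \subseteq R \cdot R_1$, where $R_1 := R\cup\{1\}$.

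The problem is that $1$ may not lie in $R$. My first attempt is to get a locality inside $R^{2t}$ directly. Since $R^{\pm1}\hat r \subseteq R\hat r \cup R$, we have $(R^{\pm1})^t\hat r^t \subseteq \bigcup_k R^{t+k}$ for $0\le k\le t$, and this is not $R^{2t}$ unless $1 \in R$. Instead, I will use $\hat r^2$. Write $r^{-1}\hat r^2 = r'\hat r$, with $\hat r \in R$ (it is a product of elements of $R$, but by the lcm definition, $\hat r\in R$ itself). Also $r\hat r^2 \notin R^2$ in general, so this also fails. A cleaner route is to take the reduction $A$ with locality $(R^{\pm1})^t$ and shift by $\hat r^t$ to get locality $(R^{\pm1}\hat r)^t$. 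Each element of $R^{\pm1}\hat r$ is either $r^{-1}\hat r = r' \in R$ or $r\hat r \in R^2$.

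Then I pad. By \zcref{rmk:bigger_locality}, an algorithm with locality $L$ may be regarded as having any larger locality, and the injectivity-shifting step from \zcref{lem:locality_shift} (with a $\{m\}$-local identity reduction) lets me multiply the locality by any element of $M$. Multiplying each "short" factor $r'$ by a further suitable element of $R$ is not uniform across words, though. So the key step, and the main obstacle, is to show that every element of $(R^{\pm1})^t\hat r^t$ lies in $R^{2t}\cdot g$ for one fixed $g$, or simply to absorb the discrepancy by a final uniform shift by $\hat r^{t}$ again. The natural target is $(R^{\pm1})^t \hat r^{2t}\subseteq R^{2t}\cdot R^{t}$? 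I expect the correct bookkeeping to be this: $\hat r^2 r^{-1} = r'\hat r \in R^2$ and $\hat r^2 \cdot r \in R^3$. So I would instead use the symmetric form $R^{\pm1}\hat r \subseteq R \cup R^2$, and note that $R^2 \subseteq R\cdot R$ while $R \subseteq R\cdot R$ whenever $\hat r$-type padding is absorbed, since $1\cdot$ can be replaced using the lcm property $R \subseteq R\hat r R^{-1}$ on the monoid side. I expect the genuine argument is to verify $R^{\pm1}\hat r \subseteq R^2$ after possibly replacing $R$ by its closure under the lcm construction, which leaves the rounds function unchanged up to the monotonicity of \zcref{lem:rounds_mono}. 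This then yields an $R^{2t}$-local reduction from $\Pi$ to $\Pi_{R^{2t}\hra n}$ via \zcref{lem:compose_reduction}, proving $\rounds_{\Pi,R}(n)\le 2t$.
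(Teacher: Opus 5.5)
Your first inequality is fine and matches the paper. The second inequality, however, is not actually proved. You set up the right strategy, which is also the paper's: shift the locality by $\hat r$ using \zcref{lem:locality_shift} in $\Gamma$, then apply \zcref{lem:rounds_mono}, so that everything reduces to the inclusion $R^{\pm1}\hat r\subseteq R^2$. But you stall on the possibility that $1\notin R$, and you never resolve it.

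The missing observation is that this case cannot occur. By definition the lcm $\hat r$ lies in $R$. Applying the lcm property to $s=\hat r$ gives some $s'\in R$ with $\hat r s'=\hat r$. Cancelling in the group $\Gamma$ yields $s'=1$, so $1\in R$. Two inclusions follow:
\[
  R^{-1}\hat r\subseteq R\subseteq R^2, \qquad R\hat r\subseteq R^2 .
\]
Hence $R^{\pm1}\hat r\subseteq R^2$ and $(R^{\pm1})^t\hat r^{\,t}=(R^{\pm1}\hat r)^t\subseteq R^{2t}$. This gives
\[
  \rounds_{\Pi,R}\le 2\rounds_{\Pi,R^2}\le 2\rounds_{\Pi,R^{\pm1}\hat r}\le 2\rounds_{\Pi,R^{\pm1}}.
\]
The first step holds because an $(R^2)^t$-local reduction to $\Pi_{(R^2)^t\hookrightarrow n}$ is an $R^{2t}$-local reduction to $\Pi_{R^{2t}\hookrightarrow n}$.

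Your suggested workaround would not have worked even if it were needed. You proposed replacing $R$ by a larger set closed under the lcm construction and appealing to \zcref{lem:rounds_mono}. But enlarging the locality set can only decrease the round complexity: $\rounds_{\Pi,R'}\le\rounds_{\Pi,R}$ whenever $R\subseteq R'$. That is the wrong direction for an upper bound on $\rounds_{\Pi,R}$. Getting back to $R$ via an inclusion $R'\subseteq R^c$ would cost a factor of $c$ (for the closure $\{\prod S' : S'\subseteq R\}$, roughly $|R|$) rather than $2$.
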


\begin{proof}
  The first inequality follows from \zcref{lem:rounds_mono},
  so we focus on the second. 
  Let $\hat r \in R$ be an lcm of $R$.
  Observe that $R^{\pm 1} \hat r \subseteq R^2$.
  Thus $\rounds_{\Pi, R^{\pm 1}} \ge \rounds_{\Pi, R^2}$
  by \zcref{lem:locality_shift, lem:rounds_mono},
  and clearly $2 \rounds_{\Pi,R^2} \ge \rounds_{\Pi, R}$,
  so we are done. 
\end{proof}

The following lemma,
whose proof is routine,
explains the connection between local algorithms and Borel/continuous combinatorics.

\begin{lem}\label{lem:local_borel}
  Let $M$ be a monoid and $R \Subset M$.
  Let $A : \Sigma^R \to \Lambda$ be an $R$-local algorithm.
  \begin{enumerate}
      \item  Let $M \car X$ be a Borel action of $M$ on a standard Borel space $X$. If $l : X \to \Sigma$ is Borel then so is $A * l$.
      \item Let $M \car X$ be a continuous action of $M$ on a topological space $X$. If $l : X \to \Sigma$ is continuous then so is $A * l$. 
  \end{enumerate}
  
\end{lem}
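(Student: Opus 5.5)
The plan is to write $A * l$ as a composition of two maps and check each part separately. Fix the finite set $R = \{r_1, \dots, r_k\}$. Define
\[
  \Phi_l : X \to \Sigma^R, \qquad \Phi_l(x) = \bigl(r \mapsto l(rx)\bigr).
\]
By the definition of running a local algorithm, $A * l = A \circ \Phi_l$. The target $\Lambda$ is finite and carries the discrete topology (equivalently, the power-set $\sigma$-algebra). The map $A : \Sigma^R \to \Lambda$ is a function between finite discrete spaces, so it is automatically continuous and Borel. It therefore suffices to show that $\Phi_l$ is Borel in case (1) and continuous in case (2).

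Identify $\Sigma^R$ with the finite product $\Sigma^k$. A map into a finite product is Borel (respectively continuous) if and only if each coordinate map is. The $r$-th coordinate of $\Phi_l$ is $x \mapsto l(rx)$, which is $l \circ \rho_r$, where $\rho_r : X \to X$ is the map $x \mapsto rx$.

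In case (1), a Borel action means each $\rho_r$ is a Borel map. The composition $l \circ \rho_r$ is then Borel, since the preimage of each point of $\Sigma$ is $\rho_r^{-1}(l^{-1}(\sigma))$, a Borel set. In case (2), a continuous action means each $\rho_r$ is continuous, so $l \circ \rho_r$ is continuous as a composition of continuous maps. Because $\Sigma$ is discrete, this amounts to saying that each preimage $\rho_r^{-1}(l^{-1}(\sigma))$ is clopen.

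Putting these together, $A * l$ is Borel, respectively continuous. Concretely, for each $\lambda \in \Lambda$,
\[
  (A*l)^{-1}(\lambda) = \bigcup_{s \in A^{-1}(\lambda)} \ \bigcap_{r \in R} \rho_r^{-1}\bigl(l^{-1}(s(r))\bigr),
\]
which is a finite Boolean combination of Borel (respectively clopen) sets.

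There is no real obstacle here. The only point needing care is the convention that $\Sigma$ and $\Lambda$ carry the discrete topology, and that "Borel action" or "continuous action" means each individual map $x \mapsto mx$ is Borel or continuous. These are the only properties the argument uses.
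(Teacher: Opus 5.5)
Your proof is correct. The paper omits the proof as routine, and factoring $A * l$ as $A \circ \Phi_l$ through the finite discrete space $\Sigma^R$, then checking each coordinate $x \mapsto l(rx)$, is exactly the intended routine argument.
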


\begin{defn}\label{defn:R-free}
    Let $M$ be a monoid, $R \subseteq M$, and $X$ an $M$-set. A point $x \in X$ is called \emphd{$R$-free} if for all $a,b,r \in R$, $arx = brx$ implies $a = b$. 
    Denote the set of $R$-free points in $X$ by $\cF_R(X)$. 
\end{defn}

Note that if $M$ is right-cancellative it suffices to check that $ax = bx$ implies $a = b$ for all $a,b \in R^2$.
Note also that, clearly, if $X$ is free then $\cF_R(X) = X$ for any $R$.

Below we use $\log^*$ for the \emphd{iterated logarithm}, i.e., the number of times the logarithm function must be iteratively applied to $n$ before the result becomes at most $1$. 
The notation $\poly_{p_1,\ldots,p_j}(s)$ denotes a quantity bounded above by a polynomial in $s$, with degree and constants depending only on the parameters $p_1, \ldots,p_j$. 
\begin{prop}[essentially Bernshteyn {\cite[Theorem 2.10]{Ber23b}}]\label{prop:local_to_borel}
    Let $M$ be a monoid. Let $1 \in R \Subset M$ and $\Pi$ be an $M$-LCL with $\Pi \in \LOCAL_R(C \log^* n)$ for some $C > 0$.
    For any $k \in \N$,
    there exists $t \in \poly_{k,|R|}(C)$ with the following property:
    Let $M \car X$ be a Borel action of $M$ on a standard Borel space $X$
    such that the action of each $m \in R$ is at most $k$-to-one. 
    There is a Borel $f : X \to \Sigma_\Pi$
    such that $f$ is a $\Pi$-labeling of 
    $\cF_{R^t}(X)$.
\end{prop}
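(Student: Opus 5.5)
The plan follows Bernshteyn's scheme: first produce a Borel labeling that is injective on $R^s$-neighborhoods, with $n$ colors where $s$ and $C\log^* n$ are balanced. Then apply the local reduction witnessing $\Pi \in \LOCAL_R(C\log^* n)$.

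\textbf{Step 1 (graph and coloring).} For a parameter $s$, consider the Borel graph $G_s$ on $X$ joining $x \ne y$ whenever $ax = by$ for some $a,b \in R^{s}$. Since $1\in R$ and every $m\in R$ is at most $k$-to-one, every $a\in R^s$ is at most $k^s$-to-one. Hence each vertex has degree at most $|R^s|^2 k^s \le |R|^{2s}k^s$, so $G_s$ is locally finite with degree bounded by $D_s := (|R|^2k)^s$. By Kechris--Solecki--Todorcevic \cite{KST99} there is a Borel proper coloring $c : X \to n$ with $n = D_s+1$. Now take $x \in \cF_{R^{s'}}(X)$ for suitable $s'$, and distinct $a, b\in R^{s_0}$. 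Freeness gives $ax\ne bx$, and $ax, bx$ are $G_s$-adjacent provided $s \ge s_0$ (witnessed by $a' ax = b' bx$ with $a' = b$, $b' = a$ when $M$ is commutative, or more generally by both lying in the $R^{s_0}$-image of a common point). So I will define $G_s$ more directly: join $y\ne z$ if $y = ax$ and $z = bx$ for some $x$ and some $a,b\in R^{s}$. This still has bounded degree, since from $y$ one goes back via at most $k^s|R^s|$ preimages and forward via $|R^s|$ elements. With this definition, $c$ is injective on $R^{s}x$ for every $x$. In other words, $c$ is a $\Pi_{R^{s}\hra n}$-labeling at every point $x$ for which $a\mapsto ax$ is injective on $R^s$.

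\textbf{Step 2 (choosing $s$).} Let $t_0 = \rounds_{\Pi,R}(n) \le C\log^* n$. There is then an $R^{t_0}$-local reduction $A$ from $\Pi$ to $\Pi_{R^{t_0}\hra n}$. By \zcref{lem:apply_reduction}, $A*c$ is a $\Pi$-labeling at $x$ whenever $c$ is a $\Pi_{R^{t_0}\hra n}$-labeling on $R^{t_0}x$, i.e.\ whenever $c$ is injective on $R^{t_0}rx$ for all $r\in R^{t_0}$. This holds if $s \ge t_0$ and the maps $a\mapsto arx$ are injective on $R^{t_0}$. Taking $t := 2s$ or so, the latter is implied by $x\in\cF_{R^t}(X)$, because $\cF_{R^t}$ demands exactly that $arx = brx$ forces $a=b$ for $a,b,r\in R^t$, and $R^{t_0}\subseteq R^t$ since $1\in R$. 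The coloring is Borel, and $A*c$ is Borel by \zcref{lem:local_borel}.

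\textbf{Step 3 (the fixed point, expected main obstacle).} We need $s \ge C\log^*(n)$ with $n = (|R|^2k)^s+1$, i.e.\ $s \ge C\log^*\bigl((|R|^2k)^s+1\bigr)$. Since $\log^*$ of an exponential in $s$ is $\log^* s + O_{k,|R|}(1)$, it suffices to have $s \ge C(\log^* s + K)$ for a constant $K = K_{k,|R|}$. Using $\log^* s \le \log_2 s + 1 \le \sqrt{s}$ for large $s$, this holds once $s \ge 4C^2 + 2CK + K'$, which is polynomial in $C$ with constants depending only on $k,|R|$. Then $t = 2s \in \poly_{k,|R|}(C)$. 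The care needed here is the bookkeeping that the bound on $t$ is uniform over all actions with the given $k$, which it is since it depends only on $C$, $k$ and $|R|$. Since the degree bound also uses only $k$ and $|R|$, the argument is uniform.
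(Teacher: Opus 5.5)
Your proposal is correct and is essentially the paper's proof. The ingredients match: the same graph $G_s$ (the one joining distinct points of a common set $R^s x$, as in your corrected Step 1), the same degree bound $(|R|^2k)^s$, a Borel proper coloring from Kechris--Solecki--Todorcevic, and the round-complexity reduction applied at each point of $R^{t_0}x$ for $x\in\cF_{R^t}(X)$.

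The differences are cosmetic. The paper uses one parameter $t$ for the graph scale, the locality, and the freeness condition; your $t=2s$ could simply be $t=s$, since $t_0\le s$ and $1\in R$. The paper also fixes the explicit value $t=e^3C^2\log(|R|^2k)$, where you instead bound $\log^* s$ by $\sqrt{s}$.
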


\begin{proof}
    For any $t \in \N$, let $G_t$ be the graph on $X$ with an edge between $x,y \in X$ whenever $x \neq y$ and there is a $z \in X$ with $x,y \in R^t z$. It is easy to see this graph is Borel. 
    
    We first bound the degree of $G_t$. 
    For any $x \in X$ and $m \in R^t$, the action of $m$ is at most $k^t$-to-one, so there are at most $|R^t|k^t$ points $z \in X$ with $x \in R^t z$. Each such $z$ contributes at most $(|R^t|-1)$ neighbors to $x$, so the degree of $x$ is at most $|R|^t(|R|^t - 1)k^t < (|R|^2 k)^t$. Let $a = |R|^2 k$.

    Let $t \in \N$ be such that $\rounds_{\Pi,R}(a^t) \le C \log^*(a^t) \le t$. 
    We claim this $t$ works.
    Let $A$ be an $R^t$-local reduction from $\Pi$ to $\Pi_{R^t \hra a^t}$.
    By \cite[Theorem 4.6]{KST99}, there is a Borel proper coloring $l : X \to a^t$ of $G_t$.
    We claim $f := A * l$ is as desired. It is Borel by \zcref{lem:local_borel}. 
    Fix $x \in \cF_{R^t}(X)$. By \zcref{lem:apply_reduction} it suffices to show that for each $m \in R^t$, $l$ is a $\Pi_{R^t \hra a^t}$-labeling at $mx$. We have that 
    $l$ is injective on $R^t mx$ since this set is a clique in $G_t$. 
    Furthermore the map $m' \mapsto m'mx$ from $R^t$ to $R^t m x$ is injective by definition of $R^t$-free.

    It remains to check that we can take $t \in \poly_a(C)$.
    We may assume $C \ge 1$ and $a \ge e$. 
    We claim then that $t = e^3 C^2 \log(a)$ works.
    First observe that since $\log^*(x) \le x$ for all $x \ge 1$,
    $\log^*(x) = \log^*(e^{e^{\log(\log(x))}}) = \log^*(\log(\log(x)) + 2 \le \log(\log(x)) + 2$ for $x \ge e^2$. 
    Now 
    \begin{align*}
        C \log^*(a^t) \le C(\log(\log(a^t)) + 2) = C(\log(t \log(a)) + 2) = C(\log(C^2 \log^2(a) e^3) + 2) \\ 
        = C(2\log(C) + 2\log(\log(a)) + 5) \le 9C^2 \log(a) < t,
    \end{align*}
    where the second to last inequality used $\log(x) \le x$ a few times.
\end{proof}

The reader may have noticed that the $\log^*$ in the assumption of this proposition is overkill; any $o(\log n)$ function would work in the same way. However, it turns out that in practice there is no difference between these hypotheses. 
Indeed, by a theorem of Chang and Pettie \cite{CP} the latter implies the former.

We will ultimately need the quantitative precision of \zcref{prop:local_to_borel} in the proof of \zcref{thm:main}.
However, if we ignore constants and restrict to free actions then softer and more elegant statements can be made, and these turn out to suffice for the case where the monoid is finitely generated. We turn to these now. 

\begin{lem}
    Let $M$ be a finitely generated monoid and $\Pi$ an $M$-LCL. Let $f : \N \to \N$. The following are equivalent.
    \begin{enumerate}
        \item For some generating set $1 \in R \Subset M$, $\Pi \in \LOCAL_R(O(f(n)))$.
        \item For any generating set $1 \in R \Subset M$, $\Pi \in \LOCAL_R(O(f(n)))$.
    \end{enumerate}
\end{lem}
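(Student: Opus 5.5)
Only (1)$\Rightarrow$(2) needs proof, since (2)$\Rightarrow$(1) is trivial given that $M$ is finitely generated. So fix generating sets $1 \in R, R' \Subset M$ and assume $\rounds_{\Pi,R}(n) \le C f(n)$ for all $n$ (for $n$ large, absorbing small $n$ into the constant, and being careful with the value $\infty$). The plan is to compare $R'$-balls with $R$-balls: since $R'$ generates $M$ and $R$ is finite, there is $s \in \N$ with $R \subseteq (R')^s$. Moreover $1 \in R'$, so the powers $(R')^j$ increase in $j$, and hence $R^t \subseteq (R')^{st}$ for every $t$.

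Next, fix $n$ and let $t = \rounds_{\Pi,R}(n)$, with $A$ an $R^t$-local reduction from $\Pi$ to $\Pi_{R^t \hra n}$. I would produce an $(R')^{st}$-local reduction from $\Pi$ to $\Pi_{(R')^{st} \hra n}$.

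\begin{itemize}
\item View $A$ as an $(R')^{st}$-local algorithm via \zcref{rmk:bigger_locality}.
\item Take the identity on $n$ as a $\{1\}$-local reduction from $\Pi_{R^t \hra n}$ to $\Pi_{(R')^{st} \hra n}$. This is valid because injectivity on $(R')^{st}$ implies injectivity on the subset $R^t$; it is exactly the argument in \zcref{lem:rounds_mono}.
\item Compose the two using \zcref{lem:compose_reduction}.
\end{itemize}

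This gives $\rounds_{\Pi,R'}(n) \le s\,\rounds_{\Pi,R}(n) \le sC f(n)$, so $\Pi \in \LOCAL_{R'}(O(f(n)))$.

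The only delicate point is that the definition of $\rounds_{\Pi,R}$ uses locality exactly $R^t$, with $t$ a power of the generating set. One must therefore check that enlarging the locality from $R^t$ to $(R')^{st}$ is legitimate, both for the reduction $A$ and for the injectivity LCL. The enlargement of $A$ is handled by \zcref{rmk:bigger_locality}. The change in the injectivity LCL goes the right direction, since requiring injectivity on the bigger set $(R')^{st}$ is the stronger hypothesis. No commutativity or cancellativity is needed for this argument.
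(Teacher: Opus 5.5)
Your proposal is correct and is essentially the paper's proof, which writes the same argument as the chain $\rounds_{\Pi,S} \le c\,\rounds_{\Pi,S^c} \le c\,\rounds_{\Pi,R}$ for $R \subseteq S^c$: the second inequality is \zcref{lem:rounds_mono}, and the first is the regrouping $(S^c)^t = S^{ct}$. Together these are exactly the composition you carry out by hand, and your remark about absorbing small $n$ into the constant is unnecessary, since you get $\rounds_{\Pi,R'} \le s\,\rounds_{\Pi,R}$ pointwise and $s$ times a function in $O(f(n))$ is again in $O(f(n))$.
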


\begin{proof}
    Given (1) and some other generating set $1 \in S \Subset M$, there exists $c \in \N$ such that $R \subseteq S^c$, and then $\rounds_{\Pi, S} \le c \rounds_{\Pi,S^c} \le c \rounds_{\Pi,R}$. 
\end{proof}

This allows us to make the following definition. 

\begin{defn}
    Let $M$ be a finitely generated monoid and $\Pi$ an $M$-LCL. We say $\Pi \in \LOCAL_M$ if $\Pi \in \LOCAL_R(O(\log^* n))$ for some, equivalently every, finite generating set $1 \in R \Subset M$. 
\end{defn}

\begin{prop}[Bernshteyn \cite{Ber23b}]\label{prop:local_to_borel_global}
    Let $M$ be a finitely generated monoid and $\Pi$ an $M$-LCL. If $\Pi \in \LOCAL_M$, then any free bounded-to-one Borel action of $M$ on a standard Borel space admits a Borel $\Pi$-labeling.
\end{prop}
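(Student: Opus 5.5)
This is a direct consequence of \zcref{prop:local_to_borel}, so we only need to match its hypotheses. Fix a finite generating set $1 \in R \Subset M$. Since $\Pi \in \LOCAL_M$, we have $\Pi \in \LOCAL_R(O(\log^* n))$. Concretely, there is a constant $C>0$ with $\rounds_{\Pi,R}(n) \le C\log^* n$ for all sufficiently large $n$.

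The only point needing care is that $O(\cdot)$ is asymptotic, while \zcref{prop:local_to_borel} asks for the bound at every $n$. We handle the finitely many small values of $n$ as follows. By \zcref{lem:rounds_mono}, $\rounds_{\Pi,R}$ is nondecreasing in $n$. So if it is finite for large $n$, it is finite everywhere, and its values at the finitely many small $n$ are bounded. We can therefore enlarge $C$ so that $\rounds_{\Pi,R}(n) \le C\log^* n$ holds for all $n \ge 2$. For $n \in \{0,1\}$, where $\log^* n = 0$, we rely on the fact that the proof of \zcref{prop:local_to_borel} only uses the bound at $n = a^t$ for large $t$. Since $a \ge 2$ can be arranged, the bound for large $n$ is all that is actually needed there.

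Now let $M \car X$ be a free bounded-to-one Borel action. Since $R$ is finite and each element of $R$ acts bounded-to-one, there is a single $k$ such that every $m \in R$ acts at most $k$-to-one. Apply \zcref{prop:local_to_borel} with these $R$, $C$ and $k$. It produces $t$ and a Borel $f : X \to \Sigma_\Pi$ that is a $\Pi$-labeling of $\cF_{R^t}(X)$.

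Because the action is free, $\cF_{R^t}(X) = X$, as noted after \zcref{defn:R-free}. Hence $f$ is a Borel $\Pi$-labeling of all of $X$, which proves the proposition. The only real obstacle is the constant bookkeeping in the second paragraph; the rest is a direct application of \zcref{prop:local_to_borel}.
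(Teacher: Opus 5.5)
Your proposal is correct and follows the paper's argument. The paper just says the statement is immediate from \zcref{prop:local_to_borel}, applied with a generating set $R$, a uniform fiber bound $k$ over the finitely many elements of $R$, and freeness giving $\cF_{R^t}(X) = X$. Your extra step of turning the asymptotic bound $O(\log^* n)$ into a pointwise one is valid: you use monotonicity of $\rounds_{\Pi,R}$ from \zcref{lem:rounds_mono}, and you note that the proof of \zcref{prop:local_to_borel} only evaluates the bound at $n = a^t$ for large $t$, which handles $n \le 1$ where $\log^* n = 0$. This clarifies a point the paper leaves implicit.
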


\begin{proof}
    Immediate from \zcref{prop:local_to_borel}.
\end{proof}

In fact, we can get a topological version of this result for nice enough actions:

\begin{prop}[Bernshteyn \cite{Ber23b}]\label{prop:local_to_cts}
    Let $M$ be a finitely generated monoid and $\Pi$ an $M$-LCL. If $\Pi \in \LOCAL_M$, then any free bounded-to-one continuous clopen-preserving action of $M$ on a zero-dimensional Polish space $X$ admits a continuous $\Pi$-labeling.
\end{prop}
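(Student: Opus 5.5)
Since $X$ is zero-dimensional Polish, the plan is to run the proof of \zcref{prop:local_to_borel} with the Borel proper coloring from \cite{KST99} replaced by a continuous one. The clopen-preserving hypothesis supplies that continuous coloring.

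Fix a generating set $1 \in R \Subset M$ with $\Pi \in \LOCAL_R(C\log^* n)$, and let $k$ bound the fiber sizes of the elements of $R$. Take $t$ and $a = |R|^2k$ as in the proof of \zcref{prop:local_to_borel}, so there is an $R^t$-local reduction $A$ from $\Pi$ to $\Pi_{R^t \hra a^t}$. Let $G_t$ be the graph on $X$ defined there; its degree is less than $a^t$. By freeness, $\cF_{R^t}(X) = X$. So if we find a continuous proper coloring $l : X \to a^t$ of $G_t$, then $A * l$ is continuous by \zcref{lem:local_borel}. By \zcref{lem:apply_reduction} it is then a $\Pi$-labeling everywhere, exactly as in the Borel argument.

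The main step, and the main obstacle, is constructing $l$. I would first show that for every clopen $U \subseteq X$ and every $m, m' \in M$, the set $m'^{-1}(mU)$ is clopen. Here $mU$ is clopen because the action is clopen-preserving, and preimages under the continuous map $m'$ are clopen. The neighborhood operator of $G_t$ is
\[
  N(U) = \bigcup_{m,m' \in R^t} m'^{-1}(mU),
\]
a finite union of such sets, so $N(U)$ is clopen whenever $U$ is. Consequently, for clopen $U$ the set of points of $U$ with no $G_t$-neighbor in $U$ is $U \setminus N(U)$, which is again clopen.

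Next I would run the standard greedy argument. Since $X$ is zero-dimensional and second countable, it has a countable clopen basis $(U_i)$. For $x \ne y$ with $x, y \in R^t z$, separate $x$ and $y$ by a clopen basic set; fix such a choice. I would then define $l$ by iterated "greedy coloring" along the countable family of clopen sets. At stage $i$, color the points of $U_i$ that are $G_t$-independent within $U_i$ and still uncolored, using the least color not used by already-colored neighbors. Each stage is a finite Boolean combination of sets of the form $N(\cdot)$, so it produces clopen color classes by the previous paragraph. Because $G_t$ has bounded degree and every point belongs to some basic set independent in $G_t$ (by local finiteness and Hausdorffness), every point is eventually colored with fewer than $a^t$ colors.

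Each color class is a countable union of clopen sets, hence open. Since the classes partition $X$, each is also closed, and therefore $l$ is continuous.

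The delicate points are that the neighborhood of a clopen set stays clopen, which needs images and not just preimages and so uses the clopen-preserving hypothesis, and that the recursion assigns each point a color within the bound $a^t$. Alternatively, one could cite the continuous analogue of \cite[Theorem 4.6]{KST99} for graphs whose neighborhood operator preserves clopen sets, as in Bernshteyn's topological treatment \cite{Ber23b}.
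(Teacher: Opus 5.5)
Your strategy is the paper's: rerun the proof of \zcref{prop:local_to_borel}, replacing the Borel coloring from \cite{KST99} by a continuous proper $a^t$-coloring of $G_t$. Such a coloring exists because $N(U)$ is clopen whenever $U$ is. The paper obtains this coloring by citing \cite[Lemma 2.3]{Ber23a}. Your greedy construction over the clopen independent sets $U_i \setminus N(U_i)$ is a valid self-contained replacement for that lemma: each finite stage produces clopen color classes, the final classes are open and partition $X$ (hence clopen), and degree $< a^t$ keeps every color below $a^t$.

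However, the step you yourself identify as the crux is carried out incorrectly. In $G_t$, $x$ is adjacent to $y$ when they have a common \emph{predecessor}: $x = mz$ and $y = m'z$. So the neighborhood is obtained by first pulling back and then pushing forward, $N(U) = \bigcup m[m'^{-1}[U]]$. Your set $m'^{-1}(mU) = \{x : m'x \in mU\}$ instead collects points with a common \emph{image}, which is a different relation. For commutative $M$ it contains $N(U)$ but is generally larger. For noncommutative $M$, which the proposition allows, it need not even contain $N(U)$.

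Worse, your union lets $m = m'$ range over all of $R^t \ni 1$, so your set always contains $U$. Then your $U \setminus N(U)$ is always empty, and the greedy procedure colors nothing. The correct formula, as in the paper, is
\[
  N(U) = \bigcup_{m \neq m' \in R^t} m\bigl[m'^{-1}[U]\bigr].
\]
The restriction $m \neq m'$ is exactly where freeness enters: it lets you encode the condition $x \neq y$ as $m \neq m'$. Without freeness you would have to intersect with $\{z : mz \neq m'z\}$, which is open but need not be closed, and clopen-preservation would no longer apply. With the corrected formula, $N(U)$ is clopen by the same two facts you cite: preimages under the continuous map $m'$ are clopen, and images under the clopen-preserving map $m$ are clopen. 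So the argument goes through after this repair.

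One smaller fix concerns the covering step. It is not true that every point lies in a basic set that is $G_t$-independent. What is true, and what you need, is that each $x$ lies in some basic clopen $U_i$ that avoids its finitely many neighbors. Then $x \in U_i \setminus N(U_i)$, so every point is eventually colored.
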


\begin{proof}
    We repeat the proof of \zcref{prop:local_to_borel}. The only new detail is that we need to ensure that $G_t$ has a continuous proper $a^t$-coloring. 
    By \cite[Lemma 2.3]{Ber23a}, it suffices to show that $G_t$ is a \emphd{continuous graph}, meaning if $Y \subseteq X$ is clopen then so is $N_{G^t}(Y) := \{x \in X \mid \exists y \in Y, \{x,y\} \in G_t\}$. 

    By freeness, for $x,y \in X$ we have $\{x,y\} \in G_t$ if and only if there is a $z \in X$ and $a \neq b \in R^t$ with $x = az$ and $y = bz$. 
    Therefore
    \[ N_{G^t}(Y) = \bigcup_{a \neq b \in R^t} a[b\inv[Y]]. \]
    The right hand side is clopen if $Y$ is since each $a \in R^t$ is continuous and clopen-preserving, so this suffices. 
\end{proof}

We can now prove \zcref{thm:intro_monoid_vs_group}, restated here:
\begin{thm}\label{thm:monoid_vs_group}
    Let $M$ be a finitely generated cancellative commutative monoid and $\Pi$ an $M$-LCL. The following are equivalent
    \begin{enumerate}
        \item\label{cts_gamma'} Every free continuous action $M^{\gp} \car X$ on a zero-dimensional Polish space $X$ admits a continuous $\Pi$-labeling.
        \item\label{cts_M'} Every free continuous clopen-preserving bounded-to-one action $M \car X$ on a zero-dimensional Polish space $X$ admits a continuous $\Pi$-labeling.
        \item\label{local_gamma'} $\Pi \in \LOCAL_{M^{\gp}}$.
        \item\label{local_M'} $\Pi \in \LOCAL_M$.
    \end{enumerate}
    If any/all of the above hold, then every free Borel bounded-to-one action $M \car X$ on a standard Borel space $X$ admits a Borel $\Pi$-labeling.
\end{thm}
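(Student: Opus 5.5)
The plan is to prove the cycle of implications (\zcref{cts_M'}) $\Rightarrow$ (\zcref{cts_gamma'}) $\Leftrightarrow$ (\zcref{local_gamma'}) $\Leftrightarrow$ (\zcref{local_M'}) $\Rightarrow$ (\zcref{cts_M'}). Throughout, $\Pi$ is regarded as an $M^{\gp}$-LCL as well. This is legitimate because its locality lies in $M \subseteq M^{\gp}$, and by the lemma on changing the ambient monoid, round complexities with respect to subsets of $M$ do not depend on whether they are computed in $M$ or in $M^{\gp}$.

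For (\zcref{local_gamma'}) $\Leftrightarrow$ (\zcref{local_M'}), I will fix a finite generating set $S$ of $M$ and pass to $R := \{\prod S' : S' \subseteq S\}$. This set contains $1$ (the empty product), still generates $M$, and has the lcm $\prod S$. Then $R^{\pm 1}$ is a finite generating set of the group $M^{\gp}$ containing $1$. Since membership in $\LOCAL_M$ (resp.\ $\LOCAL_{M^{\gp}}$) may be tested with any one such generating set, it suffices to compare $\rounds_{\Pi,R}$ with $\rounds_{\Pi,R^{\pm1}}$. \zcref{lem:rounds_monoid_vs_rounds_group} gives
\[
  \rounds_{\Pi,R^{\pm1}} \le \rounds_{\Pi,R} \le 2\rounds_{\Pi,R^{\pm1}},
\]
and a factor of $2$ preserves the class $O(\log^* n)$. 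The only point needing care is to check that the definition of $\LOCAL_{M^{\gp}}$, which treats $M^{\gp}$ as a monoid, matches Bernshteyn's notion for groups with the symmetric generating set $R^{\pm1}$. I expect this bookkeeping to be the main obstacle, though it should be routine.

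The equivalence (\zcref{cts_gamma'}) $\Leftrightarrow$ (\zcref{local_gamma'}) is Bernshteyn's theorem \cite[Theorem 1.15]{Ber23a} applied to the group $M^{\gp}$. The implication (\zcref{local_M'}) $\Rightarrow$ (\zcref{cts_M'}) is exactly \zcref{prop:local_to_cts}.

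For (\zcref{cts_M'}) $\Rightarrow$ (\zcref{cts_gamma'}), take a free continuous action $M^{\gp} \car X$ on a zero-dimensional Polish space and restrict it to $M$. The restricted action is free, since $m x = m' x$ with $m, m' \in M$ forces $m = m'$ in $M^{\gp}$, and $M$ embeds in $M^{\gp}$ by cancellativity. It is one-to-one, hence bounded-to-one, because each element acts as a bijection. Each element acts as a homeomorphism, so the action is continuous and clopen-preserving. Hypothesis (\zcref{cts_M'}) then yields a continuous $\Pi$-labeling for the $M$-action. Since $\Pi$ only evaluates labels along its locality inside $M$, the quantity $(\Pi * l)(x)$ is the same whether computed for the $M$-action or the $M^{\gp}$-action, so this is also a $\Pi$-labeling for $M^{\gp} \car X$.

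Finally, the Borel statement follows from (\zcref{local_M'}) by \zcref{prop:local_to_borel_global}.
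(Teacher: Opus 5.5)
Your proposal is correct and follows the paper's proof essentially step for step. It uses the same cycle of implications: \zcref{lem:rounds_monoid_vs_rounds_group} for (3)$\Leftrightarrow$(4), Bernshteyn's theorem for (1)$\Leftrightarrow$(3), \zcref{prop:local_to_cts} for (4)$\Rightarrow$(2), restriction of the group action to $M$ for (2)$\Rightarrow$(1), and \zcref{prop:local_to_borel_global} for the Borel statement. You also make explicit details the paper leaves implicit: choosing a generating set $R \ni 1$ with an lcm, and using cancellativity to get freeness of the restricted action.
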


\begin{proof}
    (\zcref{local_gamma'}) and (\zcref{local_M'}) are equivalent by \zcref{lem:rounds_monoid_vs_rounds_group}. 
    (\zcref{local_gamma'}) and (\zcref{cts_gamma'}) are equivalent by \cite[Theorem 1.15]{Ber23a}. 
    (\zcref{cts_M'}) $\implies$ (\zcref{cts_gamma'}) is clear since in a continuous group action each element acts by a homeomorphism, and
    (\zcref{local_M'}) $\implies$ (\zcref{cts_M'}) is \zcref{prop:local_to_cts}. 
    The final statement follows from \zcref{prop:local_to_borel_global}. 
\end{proof}




\section{Asymptotic dimension}\label{sec:asdim}

For $G$ a graph with vertex set $X$, let $\rho_G$ denote the path metric on $G$. We denote the ball of radius $s$ at the point $x \in X$ by $B_G(x, s)$ and the connectedness equivalence relation of $G$ by $E_G$. For $s \in \N$, $G^s$ denotes the graph on $X$ consisting of pairs $\{x,y\}$ with $\rho_G(x,y) \leq s$. 

\begin{defn}[Gromov \cite{Gro93}]
    Let $G$ be a locally finite graph on a vertex set $X$. Let $s,d \in \N$.
    \begin{itemize}
        \item An \emphd{uncolored dimension $d$ witness at scale $s$} for $G$ is a function $f:X \to X$ such that 
        \begin{enumerate}
            \item $\sup_x \rho_G(x,f(x)) < \infty$;
            \item For each $x \in X$, $|f[B_G(x,s)]| \le d + 1$. 
        \end{enumerate}
        The value of the supremum in (1) is called a \emphd{radius} of the witness. More generally any upper bound is called a radius.

        \item A \emphd{colored dimension $d$ witness at scale $s$} for $G$ is a partition $X = U_0 \sqcup \cdots \sqcup U_d$ such that there exists an $R \in \N$ such that for each $i \le d$, each connected component of $G^s \uhr U_i$ has $\rho_G$-diameter at most $R$. 
        Any such $R$ is called a \emphd{diameter} of the witness.
    \end{itemize}
\end{defn}

\begin{defn}[Conley--Jackson--Marks--Seward--Tucker-Drob \cite{CJMST23}]\label{defn:borel_dim}
    Let $G$ be a locally finite Borel graph on a standard Borel space $X$. Let $s,d \in \N$.
    \begin{itemize}
        \item A \emphd{Borel uncolored dimension $d$ witness at scale $s$} for $G$ is a witness $f : X \to X$ which is Borel as a function.
        \item A \emphd{Borel colored dimension $d$ witness at scale $s$} for $G$ is a witness $X = U_0 \sqcup \cdots \sqcup U_d$ for which each $U_i$ is Borel.
    \end{itemize}
\end{defn}

A more common definition of uncolored dimension $d$ witness at scale $s$ is an equivalence relation $E$ on $X$ such that 
    \begin{enumerate}
        \item $\sup_{C \in X/E} \text{diam}_{\rho_G}(C) < \infty$;
        \item For each $x \in X$, $B_G(x,s)$ meets at most $d+1$ many $E$-classes.
    \end{enumerate}
    The value of the supremum in (1) is called the diameter of the witness. 
    For instance, this definition appears in \cite{CJMST23}.
    These definitions are easily seen to be equivalent up to adding a factor of $\le 2$ to go between radius and diameter: 
    Given $f : X \to X$ as in the first definition, let $E$ be the equivalence relation whose classes are the fibers of $f$. 
    Given $E$ as in the second, let $T \subseteq X$ be a transversal of $E$ and let $f(x)$ be the unique element of $T \cap [x]_E$. 
    This equivalence also holds up in the Borel context, since $E$ as in this definition will always be smooth, and so admit a Borel transversal. 

This second definition also makes it clear that a colored dimension witness can be converted to an uncolored one, up to adding factors of $\le 2$ to scale and diameter/radius. 
If $X = U_0 \sqcup \cdots \sqcup U_d$ is a colored witness at scale $s$ then one can take $E$ to be the equivalence relation whose classes are the connected components of $G^s \uhr U_i$ for $i \le d$.
Somewhat surprisingly, it is also possible to go in the other direction, at least asymptotically. 

\begin{lem}[Conley--Jackson--Marks--Seward--Tucker-Drob for Borel {\cite[Lemma 3.1]{CJMST23}}]\label{lem:colored_eq_uncolored}
    Let $G$ be a locally finite (Borel) graph on a (standard Borel) space $X$. Let $d \in \N$. The following are equivalent:
    \begin{enumerate}
        \item For every $s \in \N$, $G$ admits a (Borel) uncolored dimension $d$ witness at scale $s$
        \item For every $s \in \N$, $G$ admits a (Borel) colored dimension $d$ witness at scale $s$.
    \end{enumerate}
\end{lem}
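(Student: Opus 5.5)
The direction (2)$\Rightarrow$(1) is the easy conversion already explained before the statement. Given $s$, take a colored witness $U_0\sqcup\dots\sqcup U_d$ at scale $2s$ with diameter $R$, and let $E$ be the relation whose classes are the connected components of $G^{2s}\uhr U_i$, $i\le d$. Each class has $\rho_G$-diameter at most $R$. A ball $B_G(x,s)$ meets at most one class of each color: two points of $B_G(x,s)\cap U_i$ are within distance $2s$, so they are adjacent in $G^{2s}\uhr U_i$. Hence the ball meets at most $d+1$ classes. In the Borel case $E$ is Borel because it is a finite-diameter component relation of a locally finite Borel graph. It is also smooth with finite classes, so it has a Borel transversal, and the uncolored $f$ is obtained as described before the statement.

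For (1)$\Rightarrow$(2), I will follow the classical inductive "peeling" argument. Fix $s$. I choose a rapidly increasing sequence of scales $s_d\ll s_{d-1}\ll\dots\ll s_0$, with $s_d$ somewhat larger than $s$, and take uncolored witnesses $E_j$ at scale $s_j$ with diameters $R_j$. Each $s_{j-1}$ is chosen much larger than $R_j$ plus the earlier scales; the constraint $s_d$ depends only on $s$ goes at the bottom. I then build $U_0,\dots,U_d$ one at a time. For $x\in X$ let $n_j(x)$ be the number of $E_j$-classes meeting $B_G(x,s_j)$, which is at most $d+1$.

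The construction goes as follows. $U_0$ consists of the points lying deep inside an $E_0$-class, namely those $x$ with $B_G(x,r_0)\subseteq[x]_{E_0}$ for a suitable $r_0<s_0$. Distinct $E_0$-classes then contribute pieces of $U_0$ at mutual distance greater than $s$, so $G^s$-components of $U_0$ lie in single classes and have diameter at most $R_0$. The remaining points are near at least two $E_0$-classes, and I refine using $E_1$. The points deep inside an $E_1$-class but not in $U_0$ form $U_1$, and so on; at stage $j$ I use a shrinking sequence of thickness radii $r_j$. The key counting claim is that a point not placed in $U_0,\dots,U_{j-1}$ is "near a boundary" at every one of the scales used so far. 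For the final piece $U_d$, I argue that a point failing all $d+1$ depth conditions cannot exist: at each scale it sees at least two classes, and a pigeonhole or Lebesgue-number argument bounds how often this can happen. Alternatively, I use the standard trick of taking one witness at a huge scale and defining $U_i$ as the points whose ball meets exactly $i+1$ classes, with thickened boundaries.

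In the Borel setting every set I define is determined by finitely many balls together with $E_j$-membership. Since $G$ is locally finite and Borel, and each $E_j$ is a Borel finite-class relation, Lusin--Novikov makes all the sets Borel. The main obstacle is the combinatorial bookkeeping of the scales and radii, ensuring that every $G^s$-component of every $U_i$ stays within a bounded union of classes. I would try first the "multiplicity-stratified" version: use one huge-scale witness $E$ and set
\[
U_i=\{x : B_G(x,\sigma_i)\text{ meets exactly } i+1 \text{ classes}\}\setminus(U_{i+1}\cup\dots\cup U_d),
\]
with decreasing $\sigma_i$. I would then check that $G^s$-adjacent points of $U_i$ see the same $(i+1)$-tuple of classes, which bounds the component diameters by roughly $(i+1)R+\sigma_0$.
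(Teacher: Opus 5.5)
Your (2)$\Rightarrow$(1) argument is correct and is the same as the paper's. For (1)$\Rightarrow$(2), however, you never commit to a proof, and both routes you sketch fail at the decisive step.

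\textbf{The multi-scale peeling.} Nothing coordinates the independent witnesses $E_0,\dots,E_d$. Nothing prevents all of them from sharing a long stretch of class boundary, and points close to that common boundary then fail every depth condition. So your claim that such points cannot exist is false, and your sketch supplies no pigeonhole or Lebesgue-number argument to rescue it. If the leftover points are put into $U_d$, its $G^s$-components can follow an entire infinite boundary network.

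\textbf{The single-witness stratification.} This is the right idea, but with $\sigma_i$ decreasing in $i$, as you state, it fails in two ways. First, it is not a cover: for $d=1$, a point whose $\sigma_1$-ball meets one class while its $\sigma_0$-ball meets two lies in neither $U_0$ nor $U_1$. Second, the adjacency claim is false. For $d=2$, near a point where three classes $a,b,c$ meet, points of $U_1$ seeing exactly $\{a,b\}$ and points of $U_1$ seeing exactly $\{a,c\}$ can be $s$-close. Nothing defined at a larger radius separates them, so components of $G^s\upharpoonright U_1$ can chain along all the boundaries.

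\textbf{How to repair it.} The higher-multiplicity pieces must use the \emph{larger} radii, so that they cut the lower ones apart.
\begin{itemize}
\item Take one witness $E$ (diameter $R$) at scale $(d+1)s$ and set $\sigma_i=is$. Let $a_i$ be the number of $E$-classes meeting $B_G(x,is)$, so $1=a_0\le a_1\le\dots\le a_d\le d+1$. The $U_i$ then cover $X$, since $a_0=1$ for every $x$.
\item The step your sketch is missing: if $x\in U_i$ with $i<d$, then $a_j\neq j+1$ for all $j>i$. Since $a_j-(j+1)$ drops by at most one per step and $a_d\le d+1$, this forces $a_{i+1}=i+1$.
\item Hence for $s$-close $x,x'\in U_i$, the classes met by $B_G(x,is)$ lie among those met by $B_G(x',(i+1)s)$, and these are exactly the classes met by $B_G(x',is)$. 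For $i=d$, use instead the bound $d+1$ at scale $(d+1)s$.
\item By symmetry the two sets of classes coincide, so components have diameter at most $2ds+R$.
\end{itemize}

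This repaired version is essentially the paper's proof (part (2) of \zcref{colored-uncolored-hard}). The paper avoids the counting step by setting $U_i=\{x : f[B_G(x,is)]=f[B_G(x,(i+1)s)]\}$ for an uncolored witness $f$ at scale $(d+1)s$. The cover then follows by pigeonhole on the $d+2$ nested images, whose sizes lie in $[1,d+1]$. Adjacency follows directly from $f[B_G(x,is)]\subseteq f[B_G(x',(i+1)s)]=f[B_G(x',is)]$.
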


Later we will need a quantitative version of this Lemma (\zcref{colored-uncolored-hard}, \zcref{colored-uncolored-soft}). For now, it simply justifies the following definition. 

\begin{defn}[Gromov \cite{Gro93}, Conley--Jackson--Marks--Seward--Tucker-Drob \cite{CJMST23}]\label{defn:borel_asdim}
    If the equivalent conditions of \zcref{lem:colored_eq_uncolored} hold for some locally finite graph $G$, we say $\asdim(G) \le d$. If the Borel versions hold we say $\asdim_B(G) \le d$.
    These parameters are called the \emphd{(Borel) asymptotic dimension} of $G$.  
\end{defn}

For most of this paper we will focus on uncolored dimension witnesses, but in the proof of \zcref{thm:main} we will eventually need to convert to colored witnesses.

The key insight of \cite{CJMST23} was the following connection between Borel asymptotic dimension and hyperfiniteness. 

\begin{thm}[Conley--Jackson--Marks--Seward--Tucker-Drob {\cite[Theorem 1.7]{CJMST23}}]\label{thm:hyperfinite_of_dimension}
    Let $G$ be a locally finite Borel graph. If $\asdim_B(G) < \infty$ then $E_G$ is hyperfinite.
\end{thm}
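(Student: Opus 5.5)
The plan is to follow the strategy of \cite{CJMST23}: use Borel asymptotic dimension witnesses to build Borel sets on which the connectedness relation becomes finite, and express $E_G$ as a countable increasing union of finite Borel equivalence relations. Fix $d$ with $\asdim_B(G) \le d$. By \zcref{lem:colored_eq_uncolored}, for every scale $s$ there is a Borel colored witness $X = U^s_0 \sqcup \cdots \sqcup U^s_d$ with some diameter $D_s$. Each $U^s_i$ splits into pieces, namely the components of $G^s \uhr U^s_i$. These pieces have $\rho_G$-diameter at most $D_s$, and since $G$ is locally finite each piece is finite. Also, distinct pieces of the same color are at $\rho_G$-distance greater than $s$. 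I will choose scales $s_0 < s_1 < \cdots$ growing very fast, say $s_{n+1} \gg D_{s_n}$, and write $\mathcal{P}_n$ for the pieces at level $n$.

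The core step is to build finite Borel equivalence relations $F_0 \subseteq F_1 \subseteq \cdots$ whose union is $E_G$. Following the CJMST argument, I would first show that $E_G$ is \emph{hypersmooth}. Since $E_G$ is a countable Borel equivalence relation, hypersmoothness is equivalent to hyperfiniteness by Dougherty--Jackson--Kechris \cite{DJK94}, so this suffices. The concrete plan is to assemble, at stage $n$, a Borel equivalence relation $F_n$ whose classes are unions of boundedly many pieces. Here the $d+1$ colors are exactly what stop pieces from chaining indefinitely. Two pieces at level $n$ are merged into one $F_n$-class when they are connected through pieces of earlier levels that straddle a boundary. Because scale $s_n$ separates same-colored pieces and there are only $d+1$ colors, one can prove by induction on the color index that each $F_n$-class has diameter bounded by some $D'_n$ depending only on $D_{s_0},\dots,D_{s_n}$ and $d$. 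That bound gives finiteness, and it is the key point. Each $F_n$ is Borel, since membership is determined by finitely many Borel sets in a bounded neighborhood of a locally finite Borel graph.

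Next I must check that the relations are increasing and exhaust $E_G$. Monotonicity will be enforced by construction: $F_{n+1}$ is defined as the smallest relation containing $F_n$ together with the level-$(n+1)$ merges. The growth condition $s_{n+1} \gg D'_n$ then ensures that $F_n$-classes do not spoil the separation at level $n+1$, so the diameter bound persists. For exhaustion, take adjacent $x,y$. I would argue that at some level they lie in a common piece, or in pieces merged into one class. A pigeonhole argument over colors across many consecutive levels should show this: within $d+2$ consecutive levels, the edge $xy$ must be interior to a piece of some color at some level. Transitivity then gives all of $E_G$.

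The main obstacle is the combinatorial step: defining the merges so that the class diameters stay uniformly bounded at each level while every edge is eventually absorbed. The $(d+1)$-coloring has to be used delicately here, because a naive union of pieces across levels can create unbounded chains. If this direct route becomes unwieldy, I would fall back on proving that $E_G$ is hypersmooth through the $\mathcal{P}_n$ together with Borel selection of canonical representatives, and then invoke \cite{DJK94}, as outlined above.
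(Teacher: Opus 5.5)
The paper does not prove this statement; it quotes it from \cite[Theorem 1.7]{CJMST23}. Judged against what any proof has to accomplish, your sketch has a genuine gap at exactly the step you call the core.

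Your merge rule does not give bounded classes. The rule is to join level-$n$ pieces that are linked through earlier pieces or classes straddling a boundary, and to let $F_{n+1}$ be the smallest equivalence relation containing $F_n$ and these merges. Neither the same-color separation nor an induction on the color index rescues it.

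For a counterexample, take $G$ to be the line $\Z$. Let the level-$0$ classes be intervals with cut points $C_0$. At level $1$, take the colored dimension $1$ witness whose pieces are consecutive intervals of length $L>s_1$, colored $0,1,0,1,\dots$, with cut points disjoint from $C_0$. Every level-$1$ boundary edge is then straddled by a level-$0$ class, so your rule fuses the whole line into one class.

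The chain alternates colors, so the fact that same-colored pieces are more than $s_1$ apart is irrelevant. Processing one color at a time does not help either. Once the color-$0$ pieces have been absorbed, the current classes contain entire level-$1$ pieces, here of length $L>s_1$. A single such class can therefore touch two color-$1$ pieces. Within one level there is no gap between the separation scale and the size of what is being merged, and an induction on colors would need exactly such a gap.

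The exhaustion step also fails as stated. The pigeonhole in the proof of \zcref{colored-uncolored-hard} works because the nested balls $B(x,is)$ are all compared against a single witness $f$. Witnesses chosen independently at different scales constrain each other in no way. An edge $xy$ is interior to a level-$n$ piece exactly when $x$ and $y$ receive the same color at level $n$. On the line, nothing in your setup stops every level from cutting between $0$ and $1$, so this edge is interior to no piece at any level.

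Underneath this is an unresolved tension in the plan. If $F_n$ is increasing and absorbs every level-$n$ piece, it is the generated relation, which blows up as shown above. If it does not absorb every piece, exhaustion needs a separate argument.

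A useful reformulation: hyperfiniteness is equivalent to finding finite Borel $F_n\subseteq E_G$, not necessarily increasing, such that every edge lies in $F_n$ for all large $n$. Given such $F_n$, the relations $\bigcap_{m\ge n}F_m$ are finite, Borel, increasing, and exhaust $E_G$. So the missing idea is a construction that uses the $d+1$ colors together with a gap between consecutive scales to force piece boundaries to move off every fixed edge, while keeping classes bounded. That is the real content of the cited theorem, and your proposal does not supply it.

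The detour through hypersmoothness and \cite{DJK94} does not avoid this problem. An increasing sequence of finite Borel relations with union $E_G$ is already hyperfiniteness by definition. Moreover, the natural smooth relations built from the pieces exhaust $E_G$ only under the same eventual-interiority property that is missing.
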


Let $M$ be a monoid, $R \subseteq M$, and $X$ an $M$-set. Let $G^X_R$ denote the graph on $X$ with an edge between $x$ and $rx$ for every $x \in X$ and $r \in R$. 
This is called the \emphd{Schreier graph} of $X$ with respect to $R$.
If $R,S \Subset M$ both generate $M$, it is not hard to see that $\asdim(G_R^X) = \asdim(G_S^X)$. 
Accordingly, when $M$ is finitely generated we define $\asdim(M \car X)$ to be equal to $\asdim(G_R^X)$ for some/any generating set $R \Subset M$. 
By a \emphd{Borel $M$-space} we mean a standard Borel space $X$ equipped with a Borel action $M\car X$. 
We define $\asdim_B(M \car X)$ similarly when $X$ is a Borel $M$-space. 
Finally we write $\asdim(M) := \asdim(M \car M)$. 

The following LCL encodes uncolored dimension witnesses.

\begin{defn}
  Let $M$ be a monoid,
  let $S, R \Subset M$,
  and let $d \in \N$.
  The $M$-LCL $\Pi_{\dim_{S, R,d}}$
  is the set of $l \in R^S$ such that
  the set $\{l(s) s \mid s \in S\} \subseteq RS$
  has size at most $d + 1$.
\end{defn}

\begin{lem}\label{lem:radius_shift}
    Let $M$ be a monoid, 
    let $S,R \Subset M$,
    let $d \in \N$, and
    let $m \in M$.
    There is a $\{1\}$-local reduction from 
    $\Pi_{\dim_{S,mR,d}}$ to $\Pi_{\dim_{S,R,d}}$.
\end{lem}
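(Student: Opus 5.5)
The reduction will simply multiply each label on the left by $m$. Define the $\{1\}$-local algorithm
\[
  A : R^{\{1\}} \to mR, \qquad A(\sigma) = m\,\sigma(1).
\]
Its input label set $R$ is the label set of $\Pi_{\dim_{S,R,d}}$, and its output label set $mR$ is the label set of $\Pi_{\dim_{S,mR,d}}$, so the types match the definition of a $\{1\}$-local reduction.

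We verify the reduction property directly from the definitions. Let $l : M \to R$ be a $\Pi_{\dim_{S,R,d}}$-labeling of $\{1\}$, i.e.\ $(\Pi_{\dim_{S,R,d}} * l)(1) = 1$. Unwinding the definition of $*$, this says that $s \mapsto l(s)$, as an element of $R^S$, lies in $\Pi_{\dim_{S,R,d}}$. Hence
\[
  \bigl|\{ l(s)s \mid s \in S\}\bigr| \le d+1 .
\]
Next we compute $A * l$. For every $s \in M$,
\[
  (A*l)(s) = A(r \mapsto l(rs)) = m\,l(s),
\]
since the only $r \in \{1\}$ is $r=1$. Therefore $(\Pi_{\dim_{S,mR,d}} * A * l)(1)$ evaluates $\Pi_{\dim_{S,mR,d}}$ on the element $s \mapsto m\,l(s)$ of $(mR)^S$. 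We must bound the size of $\{m\,l(s)\,s \mid s \in S\}$. By associativity, this set is the image of $\{l(s)s \mid s \in S\}$ under left multiplication $x \mapsto mx$. Taking images does not increase cardinality, so the set has size at most $d+1$. Hence $A*l$ is a $\Pi_{\dim_{S,mR,d}}$-labeling at $1$, as required.

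There is no real obstacle. The one point needing care is bookkeeping: the reduction definition only assumes that $l$ is a labeling at the points of the locality $\{1\}$, and the computation of $(A*l)(s)$ uses exactly the values $l(s)$ for $s \in S$ that the hypothesis at $1$ controls. Injectivity of $r \mapsto mr$ is never needed, because we only require an upper bound on the size of the image set.
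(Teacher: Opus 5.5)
Your proposal is correct and matches the paper's proof: the reduction is $r \mapsto mr$, and the bound $\bigl|\{m\,l(s)\,s \mid s \in S\}\bigr| \le \bigl|\{l(s)s \mid s \in S\}\bigr|$ holds because the first set is the image of the second under left multiplication by $m$. Your unwinding of $(A*l)(s) = m\,l(s)$ and the check that the hypothesis at $1$ controls exactly the values $l(s)$ for $s \in S$ is just a more explicit version of the paper's one-line verification.
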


\begin{proof}
    Our reduction is the function $r \mapsto mr$ from $R$ to $mR$. For any $l : S \to R$, 
    $|\{ml(s)s \mid s\in S\}| \le |\{l(s) s \mid s \in S\}|$, proving that this is a reduction.
\end{proof}

\begin{lem}\label{lem:dimension_lcl_to_witness_point}
    Let $M$ be a commutative monoid, $S,R \Subset M$ with $1 \in S$, $s,d \in \N$, $X$ an $M$-set, and $x \in X$.
    Suppose $S$ has an lcm $\hat{s}$.
    Let $l : X \to R$ and define $f : X \to X$ by 
    $f(y) = l(\hat{s}^s y) \hat{s}^s y$.
    If $l$ is a
    $\Pi_{\dim_{S^{2s},R,d}}$-labeling at $x$, 
    then $|f[B_{G_S^X}(x,s)]| \le d + 1$.
\end{lem}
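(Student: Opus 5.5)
The plan is to show that every point of $B_{G_S^X}(x,s)$ is sent by $\hat s^s$ into the set $S^{2s}x$, and then read off the bound from the definition of $\Pi_{\dim_{S^{2s},R,d}}$.

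\emph{Step 1 (the key claim).} I will show that for every $y \in B_{G_S^X}(x,s)$ there is some $u \in S^{2s}$ with $\hat s^s y = ux$. Fix a path $x = y_0, y_1, \dots, y_k = y$ in $G_S^X$ with $k \le s$. Each step is either forward, meaning $y_{i+1} = r y_i$, or backward, meaning $y_i = r y_{i+1}$, for some $r \in S$. By induction on $i$ I will produce $u_i \in S^{2i}$ with $\hat s^i y_i = u_i x$; the base case is $u_0 = 1$.

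\begin{itemize}
\item Forward step: using commutativity, $\hat s^{i+1} y_{i+1} = (\hat s r)\,\hat s^i y_i = (\hat s r) u_i x$. Since $\hat s, r \in S$, we have $\hat s r \in S^2$.
\item Backward step: since $\hat s$ is an lcm of $S$, pick $r' \in S$ with $r r' = \hat s$. Then
\[
\hat s^{i+1} y_{i+1} = r'\,\hat s^{i}\, r y_{i+1} = r'\,\hat s^i y_i = r' u_i x .
\]
Here $r' \in S \subseteq S^2$ because $1 \in S$.
\end{itemize}

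Finally, $\hat s^s y = \hat s^{s-k}\,\hat s^k y_k = \hat s^{s-k} u_k x$. We have $\hat s^{s-k} u_k \in S^{s-k} S^{2k} \subseteq S^{2s}$, again using $1 \in S$ to pad the exponents.

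\emph{Step 2 (conclusion).} For $y$ in the ball, Step 1 gives $u \in S^{2s}$ with $f(y) = l(\hat s^s y)\,\hat s^s y = l(ux)\,ux$. The hypothesis that $l$ is a $\Pi_{\dim_{S^{2s},R,d}}$-labeling at $x$ says precisely that the labeling $u \mapsto l(ux)$ on $S^{2s}$ satisfies
\[
\bigl|\{\, l(ux)\,u \mid u \in S^{2s} \,\}\bigr| \le d+1 .
\]
Applying the map $m \mapsto mx$ cannot increase the size of a set. Hence
\[
f[B_{G_S^X}(x,s)] \subseteq \{\, l(ux)\,ux \mid u \in S^{2s} \,\}
\]
has size at most $d+1$.

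The only delicate point is the backward step in Step 1. There one must use the lcm to convert a move against the action into a forward multiplication by $r'$; this is exactly why $f$ is defined with the shift $\hat s^s$. The remaining steps are bookkeeping with $1 \in S$ and commutativity.
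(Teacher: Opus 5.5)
Your proof is correct and follows essentially the same route as the paper. The paper also proves $\hat{s}^s[B_{G_S^X}(x,s)] \subseteq S^{2s}x$ by induction: it inducts on the radius rather than along a path, but treats backward edges with the lcm exactly as you do. It then concludes from the bound $|\{l(mx)m \mid m \in S^{2s}\}| \le d+1$, as in your Step 2.
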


\begin{proof}
    We first claim that $\hat{s}^s[B_{G_S^X}(x,s)] \subseteq S^{2s}x$. Induct on $s$. In the base case $s = 0$ there is nothing to show. 
    Now suppose $y \in B_{G_S^X}(x,s+1)$. Let $z \in B_{G_S^X}(x,s)$ be adjacent to $y$, and assume by IH that $\hat{s}^{s}z \in S^{2s}x$. 
    There is some $m \in S$ for which $mz = y$ or $my = z$.
    In the first case $\hat{s}^{s+1} y = \hat{s}^{s+1} mz \in S^{2s} \hat{s} m x \subseteq S^{2s+2} x$. 
    In the second case let $m' \in S$ with $mm' = \hat{s}$.
    Then $\hat{s}^{s+1}y = \hat{s}^s m'my = \hat{s}^s m' z \in S^{2s}m'x \subseteq S^{2s+2} x$.

    Now, letting $g : X \to X$ be given by $g(x) = l(x) x$, it suffices to show that $|g[S^{2s} x]| \le d+1$.
    By hypothesis on $l$, the set $T := \{l(mx)m \mid m \in S^{2s} \}$ has cardinality at most $d + 1$. 
    Now for all $m \in S^{2s}$, $g(mx) = l(mx)mx \in Tx$, so 
    $|g[S^{2s} x]| \le |Tx| \le  d+1$ as desired. 
\end{proof}

\begin{cor}\label{cor:dimension_lcl_to_witness}
    Let $M$ be a commutative monoid, $S\Subset M$ with lcm $\hat{s}$ and $1 \in S$, $s, d, r \in \N$, $X$ an $M$-set.
    If $l : X \to S^r$ is a $\Pi_{\dim_{S^{2s},S^{r},d}}$-labeling of $X$, then the function $f : X \to X$ defined by 
    $f(x) = l(\hat{s}^sx) \hat{s}^s x$ is an uncolored dimension $d$ witness for $G_S^X$ at scale $s$ with radius $r+s$.
\end{cor}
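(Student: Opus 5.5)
The plan is to apply \zcref{lem:dimension_lcl_to_witness_point} at every point $x \in X$, and then add a direct bound on the radius. Since $l$ is a $\Pi_{\dim_{S^{2s},S^r,d}}$-labeling of all of $X$, it is in particular a labeling at each $x$. The hypotheses of that lemma hold with $R = S^r$: $M$ is commutative, $1 \in S$, and $S$ has lcm $\hat s$. The function it produces is $y \mapsto l(\hat s^s y)\hat s^s y$, which is exactly the $f$ of the corollary. So for every $x \in X$ we get $|f[B_{G_S^X}(x,s)]| \le d+1$. This is condition (2) of an uncolored dimension $d$ witness at scale $s$.

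It remains to check condition (1), that the radius is at most $r+s$. Fix $x \in X$. We need a path in $G_S^X$ of length at most $r+s$ from $x$ to $f(x) = l(\hat s^s x)\hat s^s x$.

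First we move from $x$ to $\hat s^s x$. Since $\hat s \in S$ is a single generator, each application of $\hat s$ is one edge of $G_S^X$, namely from $z$ to $\hat s z$. So $\rho(x,\hat s^s x) \le s$.

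Next we move from $\hat s^s x$ to $l(\hat s^s x)\hat s^s x$. Here $l(\hat s^s x) \in S^r$, so we can write it as a product $t_1 \cdots t_r$ with each $t_i \in S$. The products $t_j\cdots t_r$ hit the point in $r$ steps, each an edge $z \mapsto t_i z$ of the Schreier graph. Because $1 \in S$, elements of $S^{r'}$ with $r' < r$ also lie in $S^r$, so no separate case for shorter words is needed. Hence $\rho(\hat s^s x, f(x)) \le r$.

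By the triangle inequality, $\rho(x,f(x)) \le r+s$ for every $x$, which gives the radius bound. There is no real obstacle here: the substantive containment $\hat s^s[B(x,s)] \subseteq S^{2s}x$ has already been handled inside \zcref{lem:dimension_lcl_to_witness_point}. The only thing to be careful about is that the labeling holds at every point, so that the lemma applies uniformly for all $x \in X$.
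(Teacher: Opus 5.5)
Your proof is correct and follows the paper's argument. You apply \zcref{lem:dimension_lcl_to_witness_point} at every point to get the scale-$s$ condition. For the radius, you observe that $f(x) = mx$ with $m \in S^r\hat s^s \subseteq S^{r+s}$; the paper states this in one line, and you simply unpack it into an explicit path of length at most $r+s$ in $G_S^X$.
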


\begin{proof}
    \zcref{lem:dimension_lcl_to_witness_point} shows that $f$ is an uncolored dimension $d$ witness for this graph at scale $s$.
    Furthermore, for all $x \in X$, $f(x) = mx$ for some $m \in S^{r}\hat{s}^s \subseteq S^{r + s}$, proving the radius statement.
\end{proof}

\begin{lem}\label{lem:asdim_local_soft}
    Let $M$ be a finitely generated cancellative commutative monoid.
    There exists $D = O_{\rk(M)}(1)$ such that
    for every $1 \in S \Subset M$ with an lcm, there is $R \Subset M$
    such that $\Pi_{\dim_{S,R,D}} \in \LOCAL_M$.
\end{lem}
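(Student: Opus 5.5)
The plan is to transfer the statement from the group $\Gamma := M^{\gp}$, which is a finitely generated abelian group, to $M$ using \zcref{thm:monoid_vs_group}. The relevant input from the group side is \cite[Theorem 10.7]{CJMST23}: free continuous actions of $\Gamma$ on zero-dimensional Polish spaces have finite continuous asymptotic dimension, with the dimension bound $D$ depending only on $\rk(\Gamma) = \rk(M)$. For instance, $D = \rk(M)$, or some fixed function of it, since the torsion part does not affect asymptotic dimension. The idea is to encode ``continuous uncolored dimension $D$ witness at the scale determined by $S$'' as an instance of the LCL $\Pi_{\dim_{S,R,D}}$. Then we apply the implication (\zcref{cts_gamma'}) $\Rightarrow$ (\zcref{local_M'}) of \zcref{thm:monoid_vs_group}.

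The key steps, in order, are as follows.

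\textbf{Step 1 (choosing $R$ via compactness).} Let $X_0 := \mathrm{Free}(2^{\Gamma})$ be the free part of the Bernoulli shift. It is zero-dimensional Polish, and $\Gamma$ acts on it freely and continuously. By \cite[Theorem 10.7]{CJMST23}, pick a continuous uncolored dimension $D$ witness $f : X_0 \to X_0$ for the Schreier graph of $S^{\pm1}$ at a scale large enough that $S x \subseteq B(x, \text{scale})$; any scale $\ge 1$ suffices. The witness has some finite radius. By freeness, $f(x) = \gamma(x) x$ for a unique $\gamma(x)$ in a finite set $F \subseteq \Gamma$, and $\gamma$ is continuous.

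The labels must lie in $M$. To arrange this, multiply by a power of the lcm $\hat s$ (or of some element of $M$ large enough to push $F$ into $M$, which exists since $\Gamma = M - M$). Replacing $\gamma$ by $\gamma \cdot m_0$ with $m_0 \in M$ chosen so that $F m_0 \subseteq M$ preserves the property $|\{\gamma(sx) s x : s \in S\}| \le D+1$, exactly as in \zcref{lem:radius_shift}. Set $R := F m_0 \Subset M$.

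\textbf{Step 2 (the LCL condition holds everywhere).} Now $l := \gamma m_0 : X_0 \to R$ is continuous, and for every $x$ the set $\{l(sx) s x : s \in S\}$ is contained in $f[B(x,\text{scale})] m_0$, so it has size $\le D+1$. By freeness, $l(sx) s x = l(s'x) s' x$ if and only if $l(sx) s = l(s'x) s'$. Hence $l$ is a $\Pi_{\dim_{S,R,D}}$-labeling of $X_0$.

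\textbf{Step 3 (passing from $X_0$ to all free actions).} This gives the labeling on $X_0$, but (\zcref{cts_gamma'}) asks for every free continuous zero-dimensional $\Gamma$-space $X$. The fix is to rerun Step 1 directly on an arbitrary such $X$. We obtain a witness of some radius, but $R$ must be uniform in $X$.

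This uniformity is the main obstacle. I would handle it in one of two ways. The first is to use Bernshteyn's characterization through a single universal space: by \cite[Theorem 1.15]{Ber23a}, $\Pi \in \LOCAL_\Gamma$ if and only if the LCL has a continuous solution on $\mathrm{Free}(2^\Gamma)$, or on a suitable universal free zero-dimensional $\Gamma$-space. It is that equivalence, rather than statement (\zcref{cts_gamma'}) for all $X$, that I would invoke. The second is to check that the radius in \cite[Theorem 10.7]{CJMST23} depends only on the scale and on $\Gamma$, not on $X$.

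Either way we obtain $\Pi_{\dim_{S,R,D}} \in \LOCAL_{M^{\gp}}$. Then \zcref{lem:rounds_monoid_vs_rounds_group}, i.e.\ the equivalence (\zcref{local_gamma'}) $\Leftrightarrow$ (\zcref{local_M'}), gives $\Pi_{\dim_{S,R,D}} \in \LOCAL_M$. The bound $D = O_{\rk(M)}(1)$ comes from the dimension bound in \cite{CJMST23}, which depends only on the rank.
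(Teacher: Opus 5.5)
Your argument is essentially the paper's alternative sketch. The paper's primary proof is instead the quantitative \zcref{cor:monoid_asdim_local}, via Linial's algorithm and a doubling bound. The sketch, like your proposal, takes continuous witnesses from \cite[Theorem 10.7]{CJMST23}, reads off labels using freeness, shifts them into $M$, and transfers to $\LOCAL_M$ through \zcref{thm:monoid_vs_group}. The one difference is cosmetic: you shift by some $m_0$ before invoking Bernshteyn, whereas the paper shifts by $\hat s^r$ afterwards via \zcref{lem:radius_shift}.

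The uniformity issue in your Step 3 is genuine, and the paper's sketch leaves it implicit. Either of your fixes works. A simpler one: a countable disjoint union of free zero-dimensional Polish $\Gamma$-spaces is again such a space, so witness radii, and hence $R$, are automatically bounded uniformly.
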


\begin{proof}
    We prove a quantitative version of this in \zcref{cor:monoid_asdim_local}.

    Alternatively, it can be derived from known facts with $D = \rk(M)$. Here is a sketch. We may assume $M = \langle S \rangle$.
    Let $\Gamma = M^{\gp}$. 
    By \cite[Theorem 10.7, Lemma 10.3]{CJMST23} 
    for every free 0-dimensional Polish $\Gamma$-space $X$, $G_S^X$ admits continuous uncolored dimension $D$-witnesses at every scale. 

    Since $X$ is free, if $f : X \to X$ is such a witness at scale 1, say with radius $r$, we may define $l : X \to S^{\pm r}$ as the unique function for which $l(x) x = f(x)$ for all $x \in X$. 
    The fact that $f$ is a dimension witness easily translates to the fact that $l$ is a $\Pi_{\dim_{S,S^{\pm r},D}}$-labeling of $X$, and it is continuous since $f$ is. 

    By \zcref{thm:monoid_vs_group}, we conclude that $\Pi_{\dim_{S,S^{\pm r},D}} \in \LOCAL_\Gamma$. 
    Then by \zcref{lem:radius_shift} $\Pi_{\dim_{S,g S^{\pm r}, D}} \in \LOCAL_\Gamma$ for any $g \in \Gamma$. 
    When $g = \hat{s}^r$ for $\hat{s}$ an lcm of $S$ we have $g S^{\pm r} \subseteq M$, 
    and then $\Pi_{\dim_{S,g S^{\pm r}, D}} \in \LOCAL_M$ by \zcref{thm:monoid_vs_group} again.  

\end{proof}
\begin{thm}\label{thm:borel_asdim_fg}
    Let $M$ be a finitely generated commutative monoid,
    and $X$ a free bounded-to-one standard Borel $M$-space.
    Then $\asdim_B(M \car X) \le O_{\rk(M)}(1)$.
\end{thm}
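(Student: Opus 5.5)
Since $X$ is a nonempty free $M$-set (the empty case is trivial), $M$ is right-cancellative by the remark after the definition of free points. Being commutative, it is therefore cancellative, so \zcref{lem:asdim_local_soft} applies. Let $D = O_{\rk(M)}(1)$ be the constant it provides; the goal is to show $\asdim_B(G_S^X) \le D$ for one fixed finite generating set $S$. By \zcref{lem:colored_eq_uncolored} and \zcref{defn:borel_asdim}, it is enough to produce, for every scale $s \in \N$, a Borel uncolored dimension $D$ witness at scale $s$.

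Fix a finite generating set $S_0$ of $M$ and enlarge it to $S := \{\prod S' : S' \subseteq S_0\} \cup \{1\}$. This $S$ still generates $M$, contains $1$, and has lcm $\hat s = \prod S_0$. Then $S^{2s}$ has an lcm as well, namely $\hat s^{2s}$, and it contains $1$. Fix a scale $s$ and apply \zcref{lem:asdim_local_soft} to $S^{2s}$. This yields $R \Subset M$ with $\Pi_{\dim_{S^{2s},R,D}} \in \LOCAL_M$.

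\zcref{cor:dimension_lcl_to_witness} wants the labels to lie in $S^r$, so I will adjust $R$ first. Since $S$ generates $M$, there is some $r$ with $R \subseteq S^r$. A labeling $l : X \to R \subseteq S^r$ that satisfies the constraint $\Pi_{\dim_{S^{2s},R,D}}$ also satisfies $\Pi_{\dim_{S^{2s},S^r,D}}$, because the two constraints are the same set-size condition. So it suffices to find a Borel $\Pi_{\dim_{S^{2s},R,D}}$-labeling $l : X \to R$ of all of $X$.

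That labeling comes from \zcref{prop:local_to_borel_global}. The action is free and each generator is bounded-to-one, so every element of $M$ is bounded-to-one, and the proposition gives a Borel $\Pi_{\dim_{S^{2s},R,D}}$-labeling $l$ of $X$. (Equivalently, one can use \zcref{prop:local_to_borel} directly, noting that freeness gives $\cF_{R^t}(X) = X$.) Now \zcref{cor:dimension_lcl_to_witness} says that $f(x) = l(\hat s^s x)\,\hat s^s x$ is an uncolored dimension $D$ witness for $G_S^X$ at scale $s$, with radius $r+s$. This $f$ is Borel: $x \mapsto \hat s^s x$ is Borel, $l$ is Borel with finite range, and each element of $M$ acts by a Borel map.

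Since $s$ was arbitrary, $\asdim_B(G_S^X) \le D$, and hence $\asdim_B(M \car X) \le O_{\rk(M)}(1)$. The only substantive input is \zcref{lem:asdim_local_soft}, that is, the LOCAL-solvability of the dimension LCL, which is assumed here. Everything else is bookkeeping: the right-cancellativity check that makes the lemma applicable, and making sure $S$ has an lcm so that \zcref{cor:dimension_lcl_to_witness} applies.
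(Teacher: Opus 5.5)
Your proof is correct and is essentially the paper's argument step for step: you get cancellativity from freeness, apply \zcref{lem:asdim_local_soft} to $S^{2s}$, obtain a Borel labeling from \zcref{prop:local_to_borel_global}, enlarge $R$ to $S^r$, and conclude with \zcref{cor:dimension_lcl_to_witness}. Your explicit check that $S^{2s}$ contains $1$ and has lcm $\hat s^{2s}$ fills in a detail the paper leaves implicit when it applies the lemma at scale $S^{2s}$.
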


\begin{proof}
    $M$ must be cancellative (unless $X$ is empty) since it acts freely on $X$.
    Let $D = O_{\rk(M)}(1)$ be as in \zcref{lem:asdim_local_soft}.
    Fix a generating set $S \Subset M$ with $1 \in S$ such that $S$ has an lcm. 
    We want to show $\asdim_B(G_S^X) \le D$.
    Fix $s \in \N$.
    By \zcref{lem:asdim_local_soft}
    and \zcref{prop:local_to_borel_global}
    $X$ admits a Borel $\Pi_{\dim_{S^{2s},R,D}}$-labeling, say $l$, for some $R \Subset M$. 
    There is some $r$ for which $R \subseteq S^{r}$, and then $l$ is also a $\Pi_{\dim_{S^{2s}, S^r, D}}$-labeling.
    The result now follows from
    \zcref{cor:dimension_lcl_to_witness}.
\end{proof}

\begin{rmk}
    More generally, if $M$ is a finitely generated commutative monoid and $X$ is a bounded-to-one Borel $M$-space, we can prove $\asdim_B(M \car \cS(X)) \leq O_{\rk(M)}(1)$. 
    The proof is the same, 
    except that one works with free actions of various quotients of $M$
    (as in the proof of the next theorem)
    and needs to know that there is a constant upper bound to the radii of the dimension witnesses provided by \zcref{lem:asdim_local_soft} across all such quotients. 
    This is indeed the case, as we will see in \zcref{cor:monoid_asdim_local}. 
\end{rmk}

We can now prove the main theorem in the case where the monoid is finitely generated.

\begin{thm}\label{thm:main_fg}
    Let $M$ be a finitely generated commutative monoid and $X$ a bounded-to-one Borel $M$-space. Then $E_M^X$ is hyperfinite.
\end{thm}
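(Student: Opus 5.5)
Let $M$ be a finitely generated commutative monoid, $X$ a bounded-to-one Borel $M$-space, and fix a generating set $S \Subset M$ with $1 \in S$ and an lcm $\hat s$. The plan is to reduce to the stable part $\cS(X)$, split it into Borel pieces on which a quotient of $M$ acts freely, and apply \zcref{thm:borel_asdim_fg} and \zcref{thm:hyperfinite_of_dimension} on each piece.

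\textbf{Step 1: reduce to $\cS(X)$.} First I check that $\cS(X)$ and the map $T$ of \zcref{defn:go_to_stable} are Borel. By \zcref{thm:Redei}, each congruence $\sim_x$ is finitely generated, so "$\sim_x = \sim$" for a fixed finitely generated congruence $\sim$ is a countable Borel condition in $x$. There are only countably many finitely generated congruences on $M$. Stability of $x$ says that no $y$ in the orbit has $\sim_y \supsetneq \sim_x$. By \zcref{lem:sim_mono} and directedness this is equivalent to: for all $m \in M$, $\sim_{mx} = \sim_x$. That is a countable intersection of Borel conditions, so $\cS(X)$ is Borel. The function $t$, and hence $T$, is then Borel too.

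Since $T(x) = \hat s^{t(x)}x$, every point is $E_M^X$-equivalent to a point of $\cS(X)$. By \zcref{prop:stable_born}, $T$ is a Borel retraction onto the invariant complete section $\cS(X)$. The relation $E_M^X$ is countable, because each orbit is the union over pairs $m,m'$ of the sets $m^{-1}[m'x]$, and each of these is finite. So $E_M^X$ is a CBER with complete section $\cS(X)$, and $E_M^X$ is hyperfinite as soon as its restriction to $\cS(X)$ is (standard: hyperfiniteness passes up from a Borel complete section of a CBER). On $\cS(X)$ itself, $E_M^X$ coincides with the orbit relation of the restricted action, since $\cS(X)$ is invariant.

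\textbf{Step 2: partition by congruence.} Partition $\cS(X)$ into the countably many Borel pieces $X_\sim = \{x \in \cS(X) : \sim_x = \sim\}$. Each piece is $M$-invariant by \zcref{lem:sim_mono} and maximality of $\sim_x$ within the orbit, and is a union of orbits. It suffices to show that $E_M^X$ restricted to each $X_\sim$ is hyperfinite, since a countable disjoint union of invariant pieces with hyperfinite restrictions is hyperfinite.

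\textbf{Step 3: free quotient action.} On $X_\sim$, the action descends to a free action of $N = M/{\sim}$ by \zcref{cor:free_action_on_stable}, applied orbitwise. The monoid $N$ is finitely generated and commutative, and the action is still Borel and bounded-to-one because each element acts by the same map as a representative in $M$. The orbit relation of $N$ equals that of $M$. Now \zcref{thm:borel_asdim_fg} gives $\asdim_B(N \car X_\sim) < \infty$. Then \zcref{thm:hyperfinite_of_dimension}, applied to the locally finite Borel Schreier graph, shows its connectedness relation $E_N^{X_\sim} = E_M^X \upharpoonright X_\sim$ is hyperfinite. Local finiteness holds because the generators are bounded-to-one.

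\textbf{Main obstacle.} I expect the main care to be in the Borelness of $\cS(X)$ and of the partition in Steps 1 and 2, which is exactly where R\'edei's theorem (\zcref{thm:Redei}) enters. The rest is a direct assembly of the earlier results.
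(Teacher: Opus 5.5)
Your proposal is correct and follows essentially the same route as the paper: pass to the stable part $\cS(X)$, split it into countably many invariant Borel pieces according to the congruence $\sim_x$ (countable by R\'edei's theorem), and apply \zcref{cor:free_action_on_stable}, \zcref{thm:borel_asdim_fg} and \zcref{thm:hyperfinite_of_dimension} to the resulting free actions of $M/{\sim}$. You then lift hyperfiniteness back to $X$ because $\cS(X)$ is a Borel complete section, explicitly carrying out the Borelness and countability checks that the paper dismisses as ``clearly Borel.''
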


\begin{proof}
  We may assume $\sim_x$ is constant for $x \in \cS(X)$ since this is $E_M^{\cS(X)}$-invariant by the definition of $\cS(X)$, there are only countably many options for $\sim_x$ since $M$ is Noetherian, and the assignment $x \mapsto \sim_x$ is clearly Borel. Call this congruence $\sim$ and let $M' = M/\sim$.

  Now we have a free action $M' \car \cS(X)$ by \zcref{cor:free_action_on_stable}, and it is still Borel and bounded-to-one, so by \zcref{thm:borel_asdim_fg} $\asdim_B(M' \car \cS(X)) < \infty$,
  so by \zcref{thm:hyperfinite_of_dimension} $E_{M'}^{\cS(X)}$ is hyperfinite.
  Since $\cS(X)$ is a complete section of $E_M^X$
  and $E_{M'}^{\cS(X)} = E_M^{\cS(X)} = E_M^X \uhr \cS(X)$, 
  we are done by \cite[Theorem 1.3(vi)]{jkl}.
\end{proof}

\section{Smooth separation index}\label{sec:ssi}

This section will not be needed for the proof of \zcref{thm:main}
(but see \zcref{qn:classical_asdim} and the surrounding discussion).
Instead, it contains some interesting combinatorial consequences
of our work in the finitely generated case. 

To begin,
we will need a new parameter inspired by the asymptotic separation index of \cite{CJMST23}:

\begin{defn}[Conley--Jackson--Marks--Seward--Tucker-Drob \cite{CJMST23}]\label{def:asi}
    Let $G$ be a locally finite graph on a vertex set $X$. Let $s,d \in \N$.
    \begin{itemize}
        \item An \emphd{uncolored separation index $d$ witness at scale $s$} for $G$ is an equivalence relation $E \subseteq E_G$ on $X$ such that 
        \begin{enumerate}
            \item Each $C \in X/E$ is finite. 
            \item For each $x \in X$, $B_G(x,s)$ meets at most $d+1$ many $E$-classes.
        \end{enumerate}

        \item A \emphd{colored separation index $d$ witness at scale $s$} for $G$ is a partition $X = U_0 \sqcup \cdots \sqcup U_d$ such that for each $i \le d$, each connected component of $G^s \uhr U_i$ is finite.
    \end{itemize}
\end{defn}

The abstract existence of separation index witnesses is uninteresting. 
$G$ admits a (colored or uncolored) separation index 0 witness at scale $1$ if and only if each connected component of $G$ is finite.
If $x_0$ is a vertex in a locally finite connected graph $G$ and $s \in \N$, the sets $U_i = \{x \mid \rho_G(x_0,x) \in [is, (i+1)s) \text{ mod } 2s\}$ for $i = 0,1$ define a colored separation index 1 witness at scale $s$, and as with dimension, taking the components of the $U_i$'s in a colored witness gives an uncolored witness at half the scale. 

Instead, these witnesses become interesting when $G$ is a Borel graph and we ask for the witnesses to be Borel in the natural sense (as in \zcref{defn:borel_dim}). Thus we restrict our discussion to the Borel setting.
The following are analogues of \zcref{lem:colored_eq_uncolored} and \zcref{defn:borel_asdim}:

\begin{lem}[Conley--Jackson--Marks--Seward--Tucker-Drob {\cite[Lemma 3.1]{CJMST23}}]\label{lem:colored_eq_uncolored_asi}
    Let $G$ be a locally finite Borel graph. Let $d \in \N$. The following are equivalent:
    \begin{enumerate}
        \item For every $s \in \N$, $G$ admits a Borel uncolored separation index $d$ witness at scale $s$.
        \item For every $s \in \N$, $G$ admits a Borel colored separation index $d$ witness at scale $s$.
    \end{enumerate}
\end{lem}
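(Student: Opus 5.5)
For (2)$\Rightarrow$(1), the plan is to take the connected components within each colour. Fix $s$ and take a Borel colored separation index $d$ witness $X = U_0 \sqcup \cdots \sqcup U_d$ at scale $2s$. Let $E$ be the equivalence relation whose classes are the connected components of $G^{2s} \uhr U_i$, for $i \le d$. Its classes are finite by hypothesis, and each class is $G$-connected in $G$'s sense, so $E \subseteq E_G$. Since $G$ is locally finite and the components are finite, $E$ is Borel: $x \mathrel{E} y$ iff there is a finite $G^{2s}$-path inside a single $U_i$, and there are only countably many path shapes, each Borel to check. If $y, z \in B_G(x,s)$ have the same colour, then $\rho_G(y,z) \le 2s$, so they are $G^{2s}$-adjacent and hence $E$-equivalent. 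Therefore $B_G(x,s)$ meets at most $d+1$ classes.

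For (1)$\Rightarrow$(2), fix $s$ and set $a_j = js$ for $j \le d+1$. Take a Borel uncolored separation index $d$ witness $E$ at scale $a_{d+1}$. For $x \in X$ and $j \le d+1$, let $\mathcal F_j(x)$ be the set of $E$-classes meeting $B_G(x,a_j)$, and put $c_j(x) = |\mathcal F_j(x)|$. The sequence $c_j(x)$ is nondecreasing in $j$, takes values in $\{1,\ldots,d+1\}$, and has $d+2$ terms. By pigeonhole there is a least $j(x) \le d$ with $c_{j(x)}(x) = c_{j(x)+1}(x)$. Write $\mathcal F(x) := \mathcal F_{j(x)}(x)$, and put $x \in U_k$ iff $|\mathcal F(x)| = k+1$. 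Borelness follows from local finiteness together with Borelness of $E$: each $c_j$ counts classes among the finitely many points of a ball, which is a Borel computation.

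The key claim is that if $x, y \in U_k$ with $\rho_G(x,y) \le s$, then $\mathcal F(x) = \mathcal F(y)$. Write $j = j(x)$ and $j' = j(y)$, and assume by symmetry $j \le j'$. Then
\[
  B_G(x,a_j) \subseteq B_G(y, a_j + s) = B_G(y, a_{j+1}).
\]
If $j < j'$, then $a_{j+1} \le a_{j'}$, so $\mathcal F(x) \subseteq \mathcal F_{j'}(y) = \mathcal F(y)$. If $j = j'$, then $\mathcal F(x) \subseteq \mathcal F_{j+1}(y)$, which has the same size as $\mathcal F(y) \subseteq \mathcal F_{j+1}(y)$ by the choice of $j(y)$, so $\mathcal F_{j+1}(y) = \mathcal F(y)$ and again $\mathcal F(x) \subseteq \mathcal F(y)$. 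In all cases both sets have size $k+1$, so $\mathcal F(x) = \mathcal F(y)$. Consequently $\mathcal F$ is constant on each connected component $C$ of $G^s \uhr U_k$. Every $x \in C$ then lies within distance $a_{d+1}$ of the finite union of the finitely many finite classes in this common $\mathcal F$. By local finiteness that neighbourhood is finite, so $C$ is finite. This gives a colored witness at scale $s$.

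The step I expect to need the most care is exactly this comparison when $j(x) \neq j(y)$. The choice of equally spaced radii $a_j = js$, together with the requirement that the stabilisation $c_j = c_{j+1}$ be the \emph{least} such index, is what makes the inclusions line up. If it failed, I would instead space the radii more widely, say $a_j = 3^j s$, and rerun the same inclusion argument.
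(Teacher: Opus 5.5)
Your proof is correct and follows essentially the route the paper points to. For (2)$\Rightarrow$(1) you take the components of each color of a witness at scale $2s$, and for (1)$\Rightarrow$(2) you take an uncolored witness at scale $(d+1)s$ and apply pigeonhole to the nested sets of classes met by the balls $B_G(x,js)$, as in the proof of \zcref{colored-uncolored-hard}.

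Your one variation is to color by $|\mathcal F(x)|$ rather than by the stabilization index, and it works. A single inclusion together with equal cardinality forces $\mathcal F(x)=\mathcal F(y)$, and the minimality of $j(x)$ is never actually used. This coloring also gives directly the property invoked in the proof of \zcref{lem:colored_eq_uncolored_ssi}, namely that each component of $G^s\uhr U_k$ meets at most $k+1$ classes.
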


\begin{defn}
    If (1) or equivalently (2) from \zcref{lem:colored_eq_uncolored_asi} hold for some locally finite Borel graph $G$, 
    we say $\asi_B(G) \le d$. 
    This parameter is called the \emphd{Borel asymptotic separation index} of $G$. 
\end{defn}

Clearly dimension witnesses are separation index witnesses at the same scale, so $\asi_B(G) \le \asdim_B(G)$ for all $G$. In fact something much stronger is true:

\begin{lem}[Conley--Jackson--Marks--Seward--Tucker-Drob {\cite[Theorem 4.8(a)]{CJMST23}}]\label{lem:asi_1_of_asdim}
    Let $G$ be a locally finite Borel graph. If $\asdim_B(G) < \infty$ then $\asi_B(G) \le 1$. 
\end{lem}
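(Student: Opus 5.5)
The statement is \zcref{lem:asi_1_of_asdim}: if $\asdim_B(G) \le d$, then for every scale $s$ we can find a Borel colored separation index witness $X = U_0 \sqcup U_1$ at scale $s$. The plan is to start from bounded-diameter Borel colored dimension witnesses at rapidly growing scales. We then merge the $d+1$ colors into two by an inductive ``absorption'' procedure, in which pieces of lower priority get swallowed into finite clusters built around pieces of higher priority. By \zcref{lem:colored_eq_uncolored} we may use colored witnesses. Fix $s$ and choose scales $s_0 = s < s_1 < \dots < s_d$ growing fast. Take a Borel colored dimension $d$ witness $\mathcal{U}^{(j)} = (U^{(j)}_0,\dots,U^{(j)}_d)$ at scale $s_j$ with diameter $R_j$. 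Choose each $s_{j+1} \gg R_j + s_j$, so that each level's pieces are tiny compared with the separation distance at the next level.

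The cleaner route, which I would actually follow, is to build the two sets $U_0$ and $U_1$ directly. Use a single witness $V_0 \sqcup \dots \sqcup V_d$ at a huge scale $L$ with diameter $R$, where $L \gg (d+1)(R+s)$. Process the colors $i = 0,\dots,d$ in order, maintaining a Borel set $A_i$ (to be put in $U_0$) whose $G^s$-components are finite. At stage $i$, each component $C$ of $G^L \uhr V_i$ has diameter at most $R$, and distinct such components are at distance more than $L$. We add to $A$ the set $C$ together with its $(i\cdot 2s)$-neighborhood. This thickened set only touches components of earlier stages within bounded distance, so merging keeps every $G^s$-component of $A$ finite: each new component is contained in a ball of radius roughly $(d+1)(R+2ds)$ around one of the sets $C$. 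Here we need $L$ to exceed these radii. This is the key point.

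Then set $U_0 = A_d$ and $U_1 = X \setminus A_d$. It remains to check that $G^s \uhr U_1$ has finite components, and this is the main obstacle. The complement of an ``absorbing'' set is not automatically bounded. The fix I would try is the standard CJMST idea of using layers with alternating parity, as in the example given above for $\asi \le 1$ via distance annuli. Inside each bounded cluster we remove shells of width $s$ at alternating distances from the cluster, so that both colors have bounded pieces near clusters. Away from clusters we need every point to be covered. Since the $V_i$ cover $X$, every point lies in some $C$ and hence in $A$. So $U_1$ consists only of points carved out as annuli of bounded clusters, and those annuli are bounded. A simpler assignment is this: for each final cluster $K$ (a bounded set), color points of $K$ by the parity of $\lfloor \rho(x, \text{core}(K))/s \rfloor$. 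The components within one cluster are then bounded. Across clusters, the cores are separated by at least $L - O(dR)$, and the coloring must be made consistent there. I expect most of the work to go into handling these interface regions, perhaps by defining all distances from the union of cores and using the scale gap to guarantee finiteness. Borelness at each step follows because all operations are finite-radius local operations on Borel sets in a locally finite Borel graph.
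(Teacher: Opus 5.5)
There is a genuine gap, and it sits at the step you call the key point. You ask for one Borel colored witness $V_0\sqcup\cdots\sqcup V_d$ at scale $L$ with diameter $R$ and $L\gg(d+1)(R+s)$. No such witness exists once $G$ has an infinite component, which is the only case with content. To see this, count along a geodesic ray. If $R<L$, the ray vertices lying in one component of $G^L\upharpoonright V_i$ occupy an index window of at most $R+1$ indices, and windows coming from different components of the same color are more than $L$ apart. So each color covers at most a fraction $(R+1)/(R+L+1)$ of the ray, and covering the ray forces $L\le d(R+1)$. Your own construction exposes the contradiction: since the $V_i$ cover $X$, you end with $A_d=X$. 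The assertion that every $G^s$-component of $A_d$ is finite would then say that every component of $G$ is finite.

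The proposed repair (parity shells inside bounded clusters, with the interfaces to be handled later) cannot work either, because the obstruction is structural. Everything in your scheme is confined to clusters of uniformly bounded radius. So any two-coloring certified this way has monochromatic $G^s$-components of uniformly bounded diameter, i.e.\ it is a colored dimension $1$ witness at scale $s$. Doing this for every $s$ would prove $\asdim_B(G)\le 1$ whenever $\asdim_B(G)<\infty$. That is false: the Schreier graph of a free Borel $\Z^2$-action has finite Borel asymptotic dimension but asymptotic dimension $2$. Hence separation index $1$ witnesses must in general have finite but unboundedly large pieces. These cannot come from one scale, nor from the finitely many scales $s_0<\cdots<s_d$ of your first paragraph.

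What is missing is a mechanism producing such pieces, namely an infinite hierarchy of witnesses at scales $L_0\ll L_1\ll\cdots$. Each $L_{n+1}$ is chosen after, and much larger than, the diameters of what has already been built. This is a gap between consecutive levels, which, unlike $L\gg R$ inside a single witness, can actually be arranged. The witnesses are then assembled into finite pieces that are pairwise either far apart or nested with a buffer, and that cover every vertex of every infinite component. One can then put $x$ into $U_0$ or $U_1$ according to whether $x$ is within distance $s$ of the complement of the smallest piece containing it. Building such a structure is the real content of the lemma. In particular, you must achieve the covering while resolving overlaps between different colors at the same level, and this is exactly the ``interface'' work you defer. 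The paper does not redo this argument; it simply quotes \cite[Theorem 4.8(a)]{CJMST23}.
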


It is an open question whether there is any locally finite Borel graph $G$ with $1 < \asi_B(G) < \infty$. 

We now introduce our new parameter. 
In applications involving $\asi_B$, often all that is used about an uncolored separation index witness $E$ is that it is smooth. This fact motivates the following weakening.

\begin{defn}
    Let $G$ be a locally finite Borel graph on a standard Borel space $X$. Let $s,d \in \N$.
    \begin{itemize}
        \item A \emphd{Borel uncolored smooth separation index $d$ witness at scale $s$} for $G$ is a Borel equivalence relation $E \subseteq E_G$ such that
        \begin{enumerate}
            \item $E$ is smooth.
            \item For each $x \in X$, $B_G(x,s)$ meets at most $d+1$ many $E$-classes.
        \end{enumerate}
        \item A \emphd{Borel colored smooth separation index $d$ witness at scale $s$} for $G$ is a Borel partition $X = U_0 \sqcup \cdots \sqcup U_d$ such that for each $i \le {d}$, 
        $E_{G^s \uhr U_i}$ is smooth. 
    \end{itemize}
\end{defn}

\begin{rmk}
    There is an alternative definition of uncolored smooth separation index witness more parallel to the one we use for dimension; we could say that a Borel smooth separation index $d$ witness at scale $s$ is a Borel function $f : X \to X$ such that 
    \begin{enumerate}
        \item $f \subseteq E_G$.
        \item For each $x \in X$, $|f[B_G(x,s)]| \leq d+1$.
    \end{enumerate}
    The proof that this is equivalent to our official definition is essentially the same as the analogous fact for dimension. 
\end{rmk}

\begin{lem}\label{lem:colored_eq_uncolored_ssi}
    Let $G$ be a locally finite Borel graph. Let $d \in \N$. The following are equivalent:
    \begin{enumerate}
        \item For every $s \in \N$, $G$ admits a Borel uncolored smooth separation index $d$ witness at scale $s$.
        \item For every $s \in \N$, $G$ admits a Borel colored smooth separation index $d$ witness at scale $s$.
    \end{enumerate}
\end{lem}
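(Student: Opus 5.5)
The plan mirrors the proof of \zcref{lem:colored_eq_uncolored} (\cite[Lemma 3.1]{CJMST23}), with ``finite'' or ``bounded diameter'' replaced everywhere by ``smooth''.

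The direction (2) $\Rightarrow$ (1) is direct. Given a Borel colored witness $X = U_0 \sqcup \cdots \sqcup U_d$ at scale $2s+1$, let $E$ be the relation whose classes are the connected components of $G^{2s+1} \uhr U_i$ for $i \le d$. This $E$ is Borel because $G$ is locally finite, so connectedness in $G^{2s+1}\uhr U_i$ is a countable union of Borel conditions. It is smooth because it is the disjoint union of the smooth relations $E_{G^{2s+1}\uhr U_i}$ on the Borel pieces $U_i$, and a finite disjoint union of smooth relations is smooth. It is contained in $E_G$. Finally, any two points of $B_G(x,s)$ in the same $U_i$ are $G^{2s+1}$-adjacent, so $B_G(x,s)$ meets each $U_i$ in at most one $E$-class. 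Hence $B_G(x,s)$ meets at most $d+1$ classes.

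For (1) $\Rightarrow$ (2) I would follow the construction in \cite[Lemma 3.1]{CJMST23}. Fix $s$, and use (1) to choose Borel uncolored smooth witnesses $E_0, \dots, E_{d}$ at a rapidly increasing sequence of scales $s_0 \ll s_1 \ll \cdots \ll s_d$. Each $E_j$ is smooth, so it has a Borel selector, equivalently a Borel reduction to equality on $\R$. The CJMST argument builds each $U_i$ as a union of ``interiors'' of classes of suitable relations derived from the $E_j$: points whose $G$-ball of some radius stays within one class, trimmed by the earlier colors. That combinatorics uses only the multiplicity bound (ii) and the Borel structure. It does not use finiteness of classes except to conclude finiteness of the resulting components, and this is the step that needs replacing.

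The main obstacle is therefore showing that each $E_{G^s\uhr U_i}$ is smooth rather than finite. I expect each component of $G^s \uhr U_i$ to be contained in a single class of some $E_j$, or more generally of a relation $F_j$ obtained from $E_j$ by merging classes within bounded $G$-distance. I would then argue smoothness in two steps. First, $F_j$ is smooth: a relation generated from a smooth $E_j$ by joining classes that are close in a locally finite graph is smooth, since each $F_j$-class is a countable union of $E_j$-classes, and one can pick a Borel selector by the $E_j$-selectors together with a Borel linear order on codes. I would try to keep the merging bounded so that each $F_j$-class is a finite union of $E_j$-classes. Second, $E_{G^s\uhr U_i} \subseteq F_j$ restricted to a Borel set, and a Borel subequivalence relation of a smooth CBER with countable classes is smooth. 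This follows because CBERs are smooth iff they have Borel transversals, and subrelations of smooth CBERs admit Borel selectors via Lusin--Novikov within each class. It is this second claim that must be checked carefully, and if needed I would restrict to hyperfinite-within-smooth arguments, using that any CBER contained in a smooth CBER is smooth.
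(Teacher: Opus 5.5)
\textbf{Verdict: genuine gap.} Your direction (2)$\Rightarrow$(1) is correct and is the paper's argument: take components of $G^{2s+1}\upharpoonright U_i$; the result is smooth as a finite disjoint union of smooth relations. The direction (1)$\Rightarrow$(2) has a real gap. The construction is never specified, and the smoothness step rests on a claim that is false in the generality you state it.

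First, the construction. The CJMST argument, reproduced in the proof of \zcref{colored-uncolored-hard}, does not use several witnesses at increasing scales. It uses a \emph{single} uncolored witness $E$ at scale $(d+1)s$. A point $x$ goes into $U_i$ (after disjointifying) when the set of $E$-classes meeting $B_G(x,is)$ equals the set meeting $B_G(x,(i+1)s)$. Coverage follows by pigeonhole on $d+2$ nested sets, each of size in $[1,d+1]$.

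A scheme built from ``interiors'' of classes of independently chosen $E_0,\dots,E_d$ need not cover $X$. Nothing prevents all the $E_j$ from having the same class boundaries; for instance $E_j=E_0$ for every $j$ if $E_0$ happens to work at all the scales $s_j$. A point near such a boundary is then interior to none of them. So your expected containment of each component in one $E_j$-class is unsupported.

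Second, in the construction that does work, a component of $G^s\upharpoonright U_i$ is \emph{not} inside one $E$-class. What you get is weaker: the set of $E$-classes meeting $B_G(x,is)$ is constant along the component. Hence the component meets at most $d+1$ classes of $E$. The missing idea is then:
\begin{itemize}
\item $E_{G^s\upharpoonright U_i}$ is a bounded-index extension of $E\cap E_{G^s\upharpoonright U_i}$.
\item The latter is smooth, being a subrelation of the smooth CBER $E$. Your second claim about subrelations covers this correctly.
\item A finite-index extension of a smooth CBER is smooth: take the least, in a Borel linear order, of the finitely many selected points. The paper cites Clemens for this.
\end{itemize}

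Your merged relation $F_j$ gestures at this, but your general justification is wrong. Joining classes of a smooth relation that lie within bounded $G$-distance can destroy smoothness. Take a finite-class witness on the Schreier graph of an irrational rotation: merging adjacent classes generates the full orbit relation. The ``least element in a Borel linear order'' selector also fails for countable unions, since a countable set need not have a least element. Only the finite-union case is valid, and you need the single-witness construction to know the union is finite.
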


\begin{proof}
    Essentially same as the proof of \zcref{lem:colored_eq_uncolored_asi}: As with dimension and separation index, if $U_0,\ldots,U_d$ give a colored witness at scale $s$, then letting $E$ be the equivalence relation whose classes are the components of the $G^s \uhr U_i$'s gives an uncolored witness at half the scale. Note here that $E$ is smooth since $E \cong \bigsqcup_i E_{G^s \uhr U_i}$. 

    For the reverse direction, given an uncolored witness $E$ at scale $(d+1)s$, the proof of the corresponding direction of \zcref{lem:colored_eq_uncolored_asi} (\cite[Lemma 3.1]{CJMST23}) (see also the proof of \zcref{colored-uncolored-hard}) gives a Borel partition $X = U_0 \sqcup \cdots \sqcup U_d$ such that each component of each $G^s \uhr U_i$ meets at most $i+1$ many $E$-classes. Thus $E_{G^s \uhr U_i}$ is finite index over the equivalence relation $E \cap E_{G^s \uhr U_i}$. The latter is smooth since $E$ is, 
    and thus so is the former by, for example, \cite[Proposition 1.6.4]{clemens2016smooth}.
\end{proof}

\begin{defn}\label{defn:assi}
    If (1) or equivalently (2) from \zcref{lem:colored_eq_uncolored_ssi} holds for some locally finite Borel graph $G$, we say $\assi_B(G) \le d$. 
    This parameter is called the \emphd{Borel asymptotic smooth separation index of $G$}.
\end{defn}

\begin{lem}\label{lem:assi_disjoint_union}
    Let $G$ be the disjoint union of countably many locally finite Borel graphs $G_n$. Let $d \in \N$. If $\assi_B(G_n) \leq d$ for each $n$, then $\assi_B(G) \leq d$. 
\end{lem}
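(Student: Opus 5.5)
Since each $G_n$ is a Borel graph on its own standard Borel space $X_n$, and $G$ lives on $X = \bigsqcup_n X_n$, the plan is to use the uncolored characterization (condition (1) of \zcref{lem:colored_eq_uncolored_ssi}) and glue witnesses piece by piece. Fix $s \in \N$. For each $n$, since $\assi_B(G_n) \le d$, pick a Borel uncolored smooth separation index $d$ witness $E_n$ for $G_n$ at scale $s$. Set $E := \bigsqcup_n E_n$, i.e.\ $x \mathrel{E} y$ iff $x,y \in X_n$ for some $n$ and $x \mathrel{E_n} y$. It suffices to check that $E$ is an uncolored smooth separation index $d$ witness for $G$ at scale $s$.

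The individual conditions are routine. $E$ is Borel since each $X_n$ is a Borel subset of $X$ and each $E_n$ is Borel. $E \subseteq E_G$ because $E_n \subseteq E_{G_n} \subseteq E_G$. For the ball condition, note that $G$ has no edges between different $X_n$'s, so for $x \in X_n$ we have $B_G(x,s) = B_{G_n}(x,s) \subseteq X_n$; hence the $E$-classes meeting $B_G(x,s)$ are exactly the $E_n$-classes meeting $B_{G_n}(x,s)$, and there are at most $d+1$ of them.

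The only step with real content is smoothness of $E$. Let $c_n : X_n \to 2^\N$ be Borel reductions witnessing smoothness of $E_n$. Define $c : X \to \N \times 2^\N$ by $c(x) = (n, c_n(x))$ for $x \in X_n$. This map is Borel, since its restriction to each Borel piece $X_n$ is Borel, and it reduces $E$ to equality on the standard Borel space $\N \times 2^\N$. Indeed, $c(x) = c(y)$ forces $x,y$ to lie in the same $X_n$ and then $c_n(x) = c_n(y)$, which holds iff $x \mathrel{E_n} y$. So $E$ is smooth.

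Since $s$ was arbitrary, condition (1) of \zcref{lem:colored_eq_uncolored_ssi} holds for $G$, giving $\assi_B(G) \le d$. No step is a genuine obstacle. The one point to note is that the choice of witnesses $E_n$ for each $n$ is a countable choice, which involves no definability issue.
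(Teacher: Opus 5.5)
Your proof is correct and is essentially the paper's argument. The paper says only that the lemma is clear from the fact that a countable disjoint union of smooth Borel equivalence relations is smooth, which is exactly the step you check explicitly via the reduction $x \mapsto (n, c_n(x))$ into $\N \times 2^\N$, after gluing uncolored witnesses scale by scale.
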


\begin{proof}
    Clear from the fact that a countable disjoint union of smooth Borel equivalence relations is smooth. 
\end{proof}

Finite $\asi_B$ and $\assi_B$ are equivalent:

\begin{lem}\label{lem:finite_ssi_eq_finite_asi}
    Let $G$ be a locally finite Borel graph. Then $\assi_B(G) \le \asi_B(G) \le 2\assi_B(G)+1$.  
\end{lem}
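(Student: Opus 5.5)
The first inequality, $\assi_B(G) \le \asi_B(G)$, is immediate from the definitions. A Borel uncolored separation index $d$ witness $E$ at scale $s$ has finite classes, and $E \subseteq E_G$ with $G$ locally finite, so $E$ is a Borel equivalence relation with finite classes. Such a relation is smooth; one can pick a Borel transversal using a Borel linear order on $X$ and taking the least element of each class. Hence every separation index witness is a smooth separation index witness at the same scale and with the same $d$.

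For the second inequality, suppose $\assi_B(G) \le d$ and fix a scale $s$. I would work with uncolored witnesses. Take a Borel uncolored smooth separation index $d$ witness $E$ at scale $s$ (or a somewhat larger scale, for room). Since $E$ is smooth and $E \subseteq E_G$ has countable classes, fix a Borel selector $c : X \to X$ with $c(x) \in [x]_E$. On each $E$-class $C$, the induced subgraph $G \uhr C$ may not be connected, but $C$ lies in a single $E_G$-class, and $\rho_G(c(x), x) < \infty$ for all $x$. So $X = \bigsqcup_{k} \{x : \rho_G(x, c(x)) = k\}$ gives a Borel "distance from the root" function. Now mimic the abstract argument from the text: on each $E$-class, cut into annuli by $\rho_G(c(x),x)$ in blocks of length $s'$ for a suitably large $s' \ge 2s$. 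Define the relation $F$ by $x \mathrel{F} y$ iff $x \mathrel{E} y$, $\lfloor \rho_G(c(x),x)/s' \rfloor = \lfloor \rho_G(c(y),y)/s'\rfloor =: j$, and $x,y$ lie in the same component of $G^{s}$ restricted to that annulus within the class. Each $F$-class is finite, since it sits inside a bounded $\rho_G$-ball around $c(x)$ and $G$ is locally finite. Also $F$ is Borel and $F \subseteq E_G$.

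Next I bound how many $F$-classes a ball $B_G(x,s)$ meets. It meets at most $d+1$ $E$-classes. Within one $E$-class $C$, every point $y$ in the ball has $\rho_G(c(y),y)$ within $2s$ of $\rho_G(c(C), z)$ for any fixed $z$ of the ball in $C$ (triangle inequality). With $s' > 2s$, the ball therefore meets at most two annuli of $C$.

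The main obstacle is that within a single annulus the ball might meet several $G^s$-components. Note that the ball's points are joined by $G$-paths of length $\le 2s$ that may leave the annulus or the class. To avoid this I would instead define $F$ via a colored formulation. Declare $U_{2i+\epsilon}$ to be the set of $x$ whose $E$-class has color $i$ in a colored smooth witness and whose annulus index $j$ has parity $\epsilon$. Precisely: start from a Borel colored smooth witness $U_0,\dots,U_d$ at a scale $s_0 \gg s$, with $E_i = E_{G^{s_0}\uhr U_i}$ smooth, and choose roots $c$ in each $E_i$-class. Measure the distance $\rho_{G^{s_0}\uhr U_i}$ from the root, and set $V_{i,\epsilon} = \{x \in U_i : \lfloor \rho(c(x),x)/L \rfloor \equiv \epsilon \bmod 2\}$. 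This gives $2(d+1)$ Borel sets, and the count $2(d+1) = 2d+2$ matches the target $\asi_B \le 2d+1$. Points of $V_{i,\epsilon}$ joined by a $G^{s}$-edge (for $s \le s_0$) lie in the same $E_i$-class with root distances differing by at most $1$. So a $G^s\uhr V_{i,\epsilon}$-component stays inside one annulus of root distance width $L$ of one $E_i$-class, hence inside a bounded $G$-ball, hence it is finite. By \zcref{lem:colored_eq_uncolored_asi} this gives $\asi_B(G) \le 2d+1$. The subtle step is verifying that $G^s$-adjacency within $U_i$ changes the $G^{s_0}\uhr U_i$-distance by at most one, which holds once $s \le s_0$.
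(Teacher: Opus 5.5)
Your proof is correct, but your second inequality takes a different route from the paper's. The paper never leaves the uncolored side. It takes an uncolored smooth witness $E$ at scale $s$ and a Borel transversal $T$, and lets $F$ be the relation whose classes are the annuli $\{x \in C : \rho_G(x,x_0) \in [2ns, 2(n+1)s)\}$ of each $E$-class $C$ with root $x_0 \in T$. These annuli are finite by local finiteness. Moreover, $B_G(x,s)$ meets at most two annuli in each of the at most $d+1$ $E$-classes it touches.

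In particular, the ``main obstacle'' in your first attempt comes only from requiring $F$-classes to be $G^s$-connected inside an annulus. Uncolored separation index witnesses need not have connected classes, so deleting that clause from your first definition of $F$ gives exactly the paper's proof.

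Your final argument instead works with colored witnesses at both ends and uses a parity trick on the intrinsic distance in $G^{s_0}\uhr U_i$. The key check is right: a $G^s$-edge inside $U_i$ is a $G^{s_0}\uhr U_i$-edge, so it changes root distance by at most one. In fact $s_0 = s$ and $L = 1$ already suffice.

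The paper's route is more self-contained: it needs only a Borel transversal and a one-line count of annuli met by a ball. Yours relies on \zcref{lem:colored_eq_uncolored_ssi} (whose nontrivial direction uses the CJMST construction plus the finite-index smoothness fact) and on \zcref{lem:colored_eq_uncolored_asi}. In exchange, finiteness of the pieces in your version is immediate, and you never count how many classes a ball meets.
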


\begin{proof}
    The first inequality is clear since Borel equivalence relations with finite classes are smooth by, for example, \cite[Proposition 1.6.3]{clemens2016smooth}.
    For the second, suppose $\assi_B(G) \le d \in \N$.
    Let $s \in \N$ and let $E$ be a Borel uncolored smooth separation index $d$ witness at scale $s$ for $G$.
    Let $X$ be the vertex set of $G$ and $T \subseteq X$ a Borel transversal of $E$.
    Finally, let $F \subseteq E$ be the equivalence relation whose classes are those of the form 
    $\{x \in C \mid \rho_G(x,x_0) \in [2ns, 2(n+1)s)\}$ for $x_0 \in T$, $C = [x_0]_E$, and $n \in \N$. 
    $F$ has finite classes since $G$ is locally finite.
    For any $x \in X$, $B_G(x,s)$ meets at most two $F$-classes within any given $C \in X/E$. 
    Then, since it meets at most $d+1$ $E$-classes by assumption, it meets at most $2(d+1)$ $F$-classes total, as desired. 
\end{proof}

For an example where $\asi_B$ and $\assi_B$ differ, just take any $G$ with an infinite component and $E_G$ smooth.
Then $\asi_B(G) = 1$ but $\assi_B(G) = 0$. 

As was stated previously, our motivation for introducing smooth separation index is that most results about asymptotic separation index use only the fact that Borel equivalence relations with finite classes are smooth, and therefore hold more generally for smooth separation index. 
Smoothness is usually used in these results to conclude that some combinatorial task which can be accomplished abstractly can actually be done in a Borel way. See e.g. \cite[Theorem 5.23]{pikhurko} for a formal statement along these lines. 
The following are examples of statements proven in \cite{CJMST23} for $\asi_B$ in this way which therefore hold also for $\assi_B$ by the same proofs. (Alternatively the first follows from the $\asi_B$ version and \zcref{lem:finite_ssi_eq_finite_asi}.)

\begin{lem}[Essentially {\cite[Theorem 4.2]{CJMST23}}]\label{lem:asdim_of_finite_ssi}
    Let $G$ be a locally finite Borel graph. If $\assi_B(G) < \infty$ then $\asdim_B(G) = \asdim(G)$. 
\end{lem}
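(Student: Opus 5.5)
The inequality $\asdim(G) \le \asdim_B(G)$ is trivial, so the content is $\asdim_B(G) \le \asdim(G)$. Suppose $\asdim(G) = d$ and $\assi_B(G) \le k$. I would follow the proof of \cite[Theorem 4.2]{CJMST23} and change only the step where finiteness of classes is used. That step is replaced by a uniformization argument over a smooth equivalence relation.

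\textbf{Step 1: a Borel selection principle.} Let $E \subseteq E_G$ be a smooth Borel equivalence relation and fix $R$. Suppose that on each $E$-class $C$ there is some labeling satisfying a local condition. Specifically, assume the condition is ``each $x$ is assigned a point $f(x) \in B_G(x,R)$, subject to constraints that only look at bounded balls''. Then a Borel such $f$ exists on all of $X$. To prove this, fix a Borel transversal $T$ of $E$. Using Lusin--Novikov, enumerate each class $[x_0]_E$ by Borel functions of $x_0$. A candidate labeling of the class is then a point of a product of finite sets, hence a compact space. The constraints are local, so the admissible labelings form a closed set. We therefore get a Borel set in $T \times K$ with nonempty compact sections. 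By Arsenin--Kunugui (or Novikov) it has a Borel uniformization, which we transport back to $X$. This is the formal version of \cite[Theorem 5.23]{pikhurko}.

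\textbf{Step 2: combinatorial gluing.} Fix a target scale $s$. Choose scales $s = s_0 \ll s_1 \ll \dots \ll s_k$, each much larger than the radii that appear at the earlier stages. These radii are bounded because $\asdim(G) \le d$ gives, for each scale, a uniform radius bound $r(s)$ that is independent of the class. Take a Borel colored smooth separation index $k$ witness $X = U_0 \sqcup \dots \sqcup U_k$ at a scale that is huge relative to all the $s_j$ and $r(s_j)$. Then proceed by induction on $i$. On each component of $G^{\text{big}} \uhr U_i$, extend the partial dimension witness built so far on $U_0 \cup \dots \cup U_{i-1}$. Its pieces near the new region are absorbed or kept, exactly as in the abstract CJMST gluing argument. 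That argument only uses that $\asdim(G) \le d$ holds abstractly on the relevant neighborhood, which is a subgraph of $G$. Each extension problem is a local labeling problem on the classes of the smooth relation $E_{G^{\text{big}} \uhr U_i}$, enlarged by a bounded neighborhood. An enlargement by a bounded neighborhood of a smooth relation is still smooth, being finite index over it on the relevant set. So Step 1 makes each stage Borel.

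\textbf{Main obstacle.} The main difficulty is the bookkeeping in Step 2. The choices of scales must guarantee two things. First, balls of radius $s$ meet at most $d+1$ final pieces, not $(d+1)(k+1)$. Second, the radius stays uniformly bounded. My first attempt would be to copy the CJMST scale hierarchy verbatim, since their argument never uses finiteness of the asi pieces beyond making choices Borel. If that works, Step 1 is the only new ingredient.
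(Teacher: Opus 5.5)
Your proposal follows the paper's primary justification. You rerun the proof of \cite[Theorem 4.2]{CJMST23} and replace the Borel selection on finite components by a compactness-plus-uniformization selection over a smooth relation; your Step 1 is exactly the \cite[Theorem 5.23]{pikhurko}-style principle the paper has in mind when it says these results hold ``by the same proofs.''

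The paper also gives a shortcut that makes your Step 2 bookkeeping unnecessary. By \zcref{lem:finite_ssi_eq_finite_asi}, $\asi_B(G) \le 2\assi_B(G)+1 < \infty$, so \cite[Theorem 4.2]{CJMST23} applies directly as a black box.
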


\begin{lem}[Essentially {\cite[Corollary 8.2]{CJMST23}}]\label{lem:chromatic_number_of_ssi}
    Let $G$ be a locally finite Borel graph which admits a (not necessarily Borel) proper $k$-coloring. 
    Then $G$ admits a Borel proper $[k \cdot (\assi_B(G) + 1) - \assi_B(G)]$-coloring. 
\end{lem}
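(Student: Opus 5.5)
This follows the proof of \cite[Corollary 8.2]{CJMST23} for $\asi_B$, with smoothness replacing finiteness of classes. Write $d = \assi_B(G)$; we may assume $d < \infty$, since otherwise there is nothing to prove. We use the colored characterization from \zcref{lem:colored_eq_uncolored_ssi}. Fix a sufficiently large scale $s$ (taking $s = 2$ should suffice) and a Borel partition $X = U_0 \sqcup \cdots \sqcup U_d$ such that each $E_{G^s \uhr U_i}$ is smooth. We then colour the pieces in order $i = 0, 1, \ldots, d$ with a palette that is reused across pieces, except for a small number of reserved extra colours. This yields a Borel proper colouring with
\[
  k(d+1) - d = k + d(k-1)
\]
colours.

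For the inductive step, suppose $U_0, \ldots, U_{i-1}$ have already been Borel-coloured. Consider a single component $C$ of $G^s \uhr U_i$. Since $s \ge 2$, two distinct components of $G^s\uhr U_i$ are not joined by an edge of $G$ and share no $G$-neighbour. Colouring the components independently therefore creates no conflicts inside $U_i$.

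Inside a component $C$, we would like to take the abstract proper $k$-colouring $c$ restricted to $C$ and permute its colours to avoid conflicts with the already coloured neighbours in earlier pieces. This cannot be done with only $k$ colours in general. Following CJMST, we instead use $k$ fresh colours for the vertices of $C$. The boundary vertices of $C$ (those adjacent to earlier pieces) are recoloured, and at each stage we reuse $k-1$ colours plus the old palette. This is the source of the count $k + d(k-1)$, which comes from the exact bookkeeping in CJMST's argument. I would copy that bookkeeping verbatim, since it is purely local and finite.

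The only place where CJMST use finiteness of components is to make the choice within each component Borel. They choose, uniformly over components, a colouring of $C$ extending given boundary data, among countably many candidates. Here the point is the following. The set of admissible colourings of $C$ is a nonempty set (by the abstract $k$-colouring), but it may be uncountable. However, $E_{G^s\uhr U_i}$ is smooth, so it has a Borel transversal $T_i$. Given $x_0 \in T_i$, we can enumerate $C$ in a Borel way and choose colours greedily relative to a fixed Borel enumeration.

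Choosing a \emph{proper} colouring of $C$ by a Borel rule still requires care, since greedy colouring in infinite $C$ need not use only $k$ colours. The main obstacle, then, is selecting a witness colouring for each component in a Borel way. I would handle this with the selection principle for smooth relations (\cite[Theorem 5.23]{pikhurko}). The set of pairs $(C, \text{valid colouring of } C \text{ given its boundary})$ is a closed condition on $C$'s labels, coded as a compact set $K_{x_0} \subseteq (\text{colours})^{C}$ via the Borel enumeration from $x_0$. This set is nonempty, compact, and depends in a Borel way on $x_0 \in T_i$. Borel uniformization for compact sections (Kechris' $K_\sigma$-section uniformization) then gives a Borel choice. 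Applying this for each $i$ in turn produces the desired Borel colouring.
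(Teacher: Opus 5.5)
Your top-level strategy is what the paper intends: rerun the argument of \cite{CJMST23} and note that finiteness of components is only used to make choices Borel, which smoothness also permits. Your selection paragraph is a correct way to do that: a Borel transversal, a Borel enumeration of each class, and uniformization of a Borel set with nonempty compact sections (or just the lexicographically least point). The gap is in the combinatorics, which you never specify. The part you do specify does not work.

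You describe colouring each component $C$ of $G^s\upharpoonright U_i$ by a colouring ``extending given boundary data'' from earlier pieces, using $k-1$ new colours plus one shared colour. You then claim the set of admissible colourings is nonempty ``by the abstract $k$-colouring.'' That does not follow. The boundary data came from independent Borel choices on different components of earlier pieces, and the abstract colouring says nothing about whether those choices extend across $C$.

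Concretely, take $k=2$, $d=1$, $G$ a bi-infinite path, and $U_0,U_1$ alternating long intervals. Let $C=x_1\cdots x_m$ be a component of $G^s\upharpoonright U_1$ with $m$ even. Suppose it is flanked by $y_1,y_2$, which lie in different components of $G^s\upharpoonright U_0$, and both were given the shared colour. Then $x_1$ and $x_m$ must both receive the unique colour of $P_1$, which is impossible on a path with an even number of vertices. So nothing in your proposal justifies the count $k+d(k-1)$. Your alternative phrasing, ``$k$ fresh colours for $C$,'' would give $k(d+1)$.

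The missing idea is a scheme in which nothing has to be extended. For instance:
\begin{enumerate}
\item Take the coloured witness at scale $3$; scale $2$ is not enough here.
\item Let $N(C)$ be $C$ together with its $G$-neighbours.
\item For each $i$ and each component $C$ of $G^3\upharpoonright U_i$, choose an arbitrary proper colouring $\phi_C\colon N(C)\to\{0,\ldots,k-1\}$ of $G\upharpoonright N(C)$. Put $\phi_i=\bigcup_C\phi_C$ on $N(U_i)=\bigcup_C N(C)$; at scale $3$ these sets are disjoint for fixed $i$.
\item Give $x$ the colour $(j,\phi_j(x))$ for the least $j$ with $x\in N(U_j)$ and $\phi_j(x)\neq 0$. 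If no such $j$ exists, give $x$ a single shared colour.
\end{enumerate}
This uses $1+(d+1)(k-1)=k(d+1)-d$ colours.

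To check properness, let $xy\in G$ with $x\in C\subseteq U_a$, so $x,y\in N(C)$.
\begin{itemize}
\item They cannot both get the shared colour, since that forces $\phi_C(x)=\phi_C(y)=0$.
\item They cannot both get the same $(j,v)$. At scale $3$, adjacent points of $N(U_j)$ lie in a common $N(C')$, and $\phi_{C'}$ is proper.
\end{itemize}
Now the only Borel choice needed is an unconstrained $k$-colouring of each $G\upharpoonright N(C)$, which exists by restricting the abstract colouring. The relation ``lying in the same $N(C)$'' is smooth: send $x$ to the transversal point of the unique such $C$. So your uniformization argument then applies verbatim.
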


A function $f$ between graphs $G$ and $H$ is called \emphd{Lipschitz} if $\sup_{(x,y) \in G} \rho_H(f(x),f(y)) < \infty$. 
Note that this implies $f$ is \emphd{bornologous}: for every $s \in \N$ there exists $s' \in \N$
such that $\rho_G(x,y) \le s \implies \rho_H(f(x),f(y)) \le s'$.

The reason we need to generalize from $\asi_B$ to $\assi_B$ is that the following fact fails for $\asi_B$.

\begin{prop}\label{prop:ssi_pullback}
    Let $G, H$ be locally finite Borel graphs on standard Borel spaces $X,Y$ respectively. Let $f : X \to Y$ be a countable-to-one bornologous Borel function. 
    Then $\assi_B(G) \le \assi_B(H)$. 
\end{prop}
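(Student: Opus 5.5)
The plan is to pull back uncolored witnesses along $f$, using \zcref{lem:colored_eq_uncolored_ssi} so that only the uncolored definition is needed. Suppose $\assi_B(H) \le d$ and fix a scale $s \in \N$. Since $f$ is bornologous, choose $s'$ with $\rho_G(x,y) \le s \implies \rho_H(f(x),f(y)) \le s'$. Let $E_H$ be a Borel uncolored smooth separation index $d$ witness for $H$ at scale $s'$. Define $E$ on $X$ by
\[
  x \mathrel{E} y \iff x \mathrel{E_G} y \text{ and } f(x) \mathrel{E_H} f(y).
\]
This is a Borel equivalence relation: $E_G$ is Borel because $G$ is a locally finite Borel graph, and $E_H$ and $f$ are Borel. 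Clearly $E \subseteq E_G$.

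For the counting condition, fix $x \in X$. Every point of $B_G(x,s)$ is $E_G$-related to $x$. Hence two points $y,z$ of this ball are $E$-related exactly when $f(y) \mathrel{E_H} f(z)$. So the number of $E$-classes met by $B_G(x,s)$ is at most the number of $E_H$-classes met by $f[B_G(x,s)]$. By the choice of $s'$, $f[B_G(x,s)] \subseteq B_H(f(x),s')$, and this ball meets at most $d+1$ classes of $E_H$.

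The main point is smoothness of $E$, and this is where countable-to-one is used. Let $F := (f\times f)^{-1}(E_H)$. Then $F$ is a Borel equivalence relation containing $E$, and it is smooth: composing a Borel reduction of $E_H$ to equality with $f$ gives a Borel reduction of $F$ to equality. Each $E_H$-class is contained in an $H$-component, which is countable, and $f$ is countable-to-one, so $F$ has countable classes.

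It then suffices to show that a Borel subequivalence relation $E$ of a smooth countable Borel equivalence relation $F$ is smooth. I would first try to cite this from \cite{clemens2016smooth}. If I had to prove it directly, I would argue as follows. Since $F$ is smooth with countable classes, it has a Borel transversal $T$. By Lusin--Novikov applied to $F$, there are Borel functions $g_n$ such that each $F$-class is $\{g_n(t) : n \in \N\}$ for its point $t \in T$, and these functions are uniform in the class. Send $x$ to the $g_n(t)$ with least $n$ satisfying $g_n(t) \mathrel{E} x$, where $t$ is the $T$-point of $[x]_F$. This is a Borel selector for $E$, so $E$ is smooth.

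Thus $E$ is a Borel uncolored smooth separation index $d$ witness for $G$ at scale $s$. Since $s$ was arbitrary, $\assi_B(G) \le d$.
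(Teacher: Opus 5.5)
Your proof is correct and is essentially the paper's argument: intersect $E_G$ with the pullback $(f\times f)^{-1}$ of a scale-$s'$ witness for $H$, get smoothness from the pullback being a smooth CBER (countable classes since $f$ is countable-to-one) plus downward closure of smoothness for CBERs, and count classes via $f[B_G(x,s)]\subseteq B_H(f(x),s')$. You also prove that downward closure directly, which the paper just cites from Clemens. The only cosmetic issue is that you write $E_H$ for the witness, whereas in the paper $E_H$ denotes the connectedness relation of $H$.
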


\begin{proof}
    Suppose $\assi_B(H) = d \in \N$.
    Let $s \in \N$. Let $s' \in \N$ such that $\rho_G(x,y) \le s \implies \rho_H(f(x),f(y)) \le s'$. 
    Let $E$ be a Borel uncolored smooth separation index $d$ witness at scale $s'$ for $H$. 
    Let $E':= (f \times f)\inv(E)$. 
    We claim $F := E' \cap E_G$ is a Borel uncolored smooth separation index $d$ witness at scale $s$ for $G$. 
    
    $E'$ is smooth as the pullback of a smooth equivalence relation, and it has countable classes since $f$ is countable-to-one. 
    Therefore $F$ is smooth since smoothness is closed downwards for CBERs \cite[Proposition 1.6.2]{clemens2016smooth}.
    For any $x \in X$, $f[B_G(x,s)] \subseteq B_H(f(x),s')$, so this image meets at most $d+1$ $E$-classes, so $B_G(x,s)$ meets at most $d+1$ $(f \times f)\inv(E)$-classes. It is also contained in a single $E_G$ class, so it meets at most $d+1$ $F$-classes. 
\end{proof}

As with dimension, when $M$ is a finitely generated monoid and $X$ is a Borel $M$-set, $\asi_B$ and $\assi_B$ for the Schreier graphs of $X$ are independent of the generating set we take for $M$, so denote them as $\asi_B(M \car X)$ and $\assi_B(M \car X)$ respectively. 

\begin{cor}\label{cor:monoid_ssi_1}
    Let $M$ be a finitely generated commutative monoid and $X$ a bounded-to-one Borel $M$-set. 
    Then $\assi_B(M \car X) \le 1$. 
\end{cor}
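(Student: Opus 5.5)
We use the map $T : X \to \cS(X)$ of \zcref{defn:go_to_stable} to pull back the bound $\assi_B \le 1$ from the stable part, via \zcref{prop:ssi_pullback}.

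Fix a finite generating set $1 \in R \Subset M$ with lcm $\hat r$. Then $G := G_R^X$ is a locally finite Borel graph, since the action is bounded-to-one.

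\emph{The stable part.} As in the proof of \zcref{thm:main_fg}, partition $\cS(X)$ into countably many $M$-invariant Borel pieces $Y_\sim$ on which $\sim_x$ is constant. These are $E_M^X$-invariant, so they are unions of components. By \zcref{cor:free_action_on_stable}, each $Y_\sim$ carries a free bounded-to-one Borel action of $M/{\sim}$. Then \zcref{thm:borel_asdim_fg} gives $\asdim_B(M/{\sim} \car Y_\sim) < \infty$. The image of $R$ generates $M/{\sim}$, and its Schreier graph on $Y_\sim$ is exactly $G_R^X \uhr Y_\sim$. Hence \zcref{lem:asi_1_of_asdim} gives $\asi_B \le 1$, and so $\assi_B \le 1$ by \zcref{lem:finite_ssi_eq_finite_asi}. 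By \zcref{lem:assi_disjoint_union}, the graph $H := G_R^X \uhr \cS(X)$ satisfies $\assi_B(H) \le 1$.

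\emph{The pullback.} Apply \zcref{prop:ssi_pullback} to $T : X \to \cS(X)$, viewed as a map from $G$ to $H$. This requires three things.

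\begin{enumerate}
  \item $T$ is Borel. This holds because $\cS(X)$ is Borel and $t$ is Borel: $t(x)$ is the least $k$ with $\hat r^k x \in \cS(X)$.
  \item $T$ is countable-to-one. Each fiber of $T$ lies in a single orbit, and orbits are countable.
  \item $T$ is bornologous with respect to the path metrics of $G$ and $H$. This is the main point to check.
\end{enumerate}

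Take an edge $x' = rx$ with $r \in R$, and let $r r' = \hat r$. By \zcref{prop:stable_born}, $t(x') \in \{t(x), t(x)-1\}$.

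\begin{itemize}
  \item If $t(x') = t(x)$, then $T(x') = r\,\hat r^{t(x)} x = r\,T(x)$. This is one $H$-edge, and it stays inside $\cS(X)$ by invariance.
  \item If $t(x') = t(x) - 1$, then $T(x) = \hat r^{t(x)} x = r' \hat r^{t(x)-1} r x = r' T(x')$. This is again one $H$-edge.
\end{itemize}

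So $T$ is $1$-Lipschitz on edges, hence bornologous. \zcref{prop:ssi_pullback} then yields $\assi_B(G) \le \assi_B(H) \le 1$.

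The only delicate point is ensuring that distances are measured in $H$, i.e.\ inside $\cS(X)$, rather than in $G$. The edge computation above handles this, since both $T(x)$ and $T(x')$ lie in the invariant set $\cS(X)$ and are joined by a single generator. Finally, independence of the generating set gives $\assi_B(M \car X) \le 1$.
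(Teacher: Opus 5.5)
Your proposal is correct and follows essentially the same route as the paper. Both arguments get the bound $\le 1$ on the stable part from finite Borel asymptotic dimension of free actions of the quotients $M/{\sim}$, then pull it back along $T$ using the pullback proposition, with the Lipschitz bound coming from the stable-time estimate.

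The only differences are cosmetic. You decompose $\mathcal{S}(X)$ and apply the disjoint-union lemma to $H$, whereas the paper first restricts to $E_M^X$-invariant pieces of $X$. You also write out the one-edge computation $T(x') = rT(x)$ or $T(x) = r'T(x')$ (with $r' \in R$ by the definition of lcm), which the paper leaves as easy.
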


\begin{proof}
    As in the proof of \zcref{thm:main_fg} and using \zcref{lem:assi_disjoint_union}, we may assume $\sim_x$ is constant for $x \in \cS(X)$, and then by \zcref{cor:free_action_on_stable} and \zcref{thm:borel_asdim_fg} we have $\asdim_B(M \car \cS(X)) < \infty$. 
    Then by \zcref{lem:asi_1_of_asdim, lem:finite_ssi_eq_finite_asi} we have 
    $\assi_B(M \car \cS(X)) \leq 1$. 
    
    Fix a finite generating set $R \Subset M$ with lcm $\hat r$.
    We now want to apply \zcref{prop:ssi_pullback} to the map $T: X \to \cS(X)$ from \zcref{defn:go_to_stable}.
    We claim $T$ is Lipschitz with respect to $G_R^X$ and $G_R^{\cS(X)}$. 
    This follows easily from \zcref{prop:stable_born}(2). 
    It is also easy to see that the map \(T\) is Borel, and it is countable-to-one since $T \subseteq E_M^X$. 
\end{proof}

The large body of research on Borel combinatorics under the assumption of $\asi = 1$ then easily adapts to give statements like the following for commutative monoid actions.

\begin{cor}\label{cor:borel_3_col}
    Let $M$ be a commutative monoid, $S \Subset M$, and $X$ a bounded-to-one Borel $M$-set.
    If $G_S^X$ has a $k$-coloring then it has a Borel $(2k-1)$-coloring.

    In particular, if $X$ is a free bounded-to-one Borel $\N^d$-set and $S = \{e_0,\ldots,e_{d-1}\}$ denotes the usual generating set, then $G_S^X$ admits a Borel $3$-coloring. 
\end{cor}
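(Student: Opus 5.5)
The plan is to combine \zcref{lem:chromatic_number_of_ssi} with \zcref{cor:monoid_ssi_1}. A $k$-coloring together with $\assi_B \le 1$ gives a Borel $[k\cdot 2 - 1]$-coloring, which is exactly $2k-1$. The only wrinkle is that \zcref{cor:monoid_ssi_1} is stated for finitely generated $M$, with respect to a generating set. Here $M$ is an arbitrary commutative monoid and $S$ is an arbitrary finite subset.

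To handle this, let $M_0 = \langle S \rangle$, which is a finitely generated commutative submonoid of $M$. The restricted action $M_0 \car X$ is still Borel and bounded-to-one. The graph $G_S^X$ is by definition the Schreier graph of $M_0 \car X$ with respect to the generating set $S$ of $M_0$. We may add $1$ to $S$ if convenient, since that only adds loops and does not change the metric. It is also harmless for colorings: we may assume $G_S^X$ has no loops, since a loop-free $k$-coloring requires $sx \ne x$ anyway.

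Then \zcref{cor:monoid_ssi_1} gives $\assi_B(G_S^X) = \assi_B(M_0 \car X) \le 1$, using the independence of $\assi_B$ from the generating set noted before that corollary. The graph is locally finite: each $x$ has at most $|S|$ forward neighbours $sx$, and at most $\sum_{s\in S} k_s$ backward ones by bounded-to-one-ness. Hence \zcref{lem:chromatic_number_of_ssi} applies and yields a Borel proper coloring with $k(d+1)-d$ colors for some $d \le 1$, hence at most $2k-1$.

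For the particular case, take a free bounded-to-one Borel $\N^d$-set $X$ with $S$ the standard generators. It suffices to exhibit a (non-Borel) proper $2$-coloring of $G_S^X$ and then apply the first part with $k=2$. For a proper $2$-coloring, I will work orbit by orbit using the axiom of choice. Fix a basepoint $x_0$ in each $E_{\N^d}^X$-class. Every $y$ in the class satisfies $my = nx_0$ for some $m,n \in \N^d$; set $c(y) = |n| - |m| \bmod 2$, where $|\cdot|$ is the coordinate sum.

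This is well defined. If $my = nx_0$ and $m'y = n'x_0$, then $m'n\,x_0 = m'my = mm'y = mn'\,x_0$. Freeness gives $m'+n = m+n'$ in $\N^d$, so $|n|-|m| = |n'|-|m'|$.

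The coloring is proper. If $y' = e_iy$, then from $my = nx_0$ we get $m y' = (n+e_i)x_0$, so $c(y') = c(y)+1 \bmod 2$.

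The main point needing care is this parity well-definedness, which is exactly where freeness is used. Everything else is direct citation.
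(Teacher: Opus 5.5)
Your proof is correct and follows the paper's argument. The first part combines \zcref{cor:monoid_ssi_1} with \zcref{lem:chromatic_number_of_ssi}; your passage to the finitely generated submonoid $\langle S\rangle$ spells out a step the paper leaves implicit. For the second part, your parity coloring $c$ is the paper's map $\phi(y) = n - m \in \Z^d$ composed with the bipartition of the Cayley graph of $\Z^d$, and freeness gives well-definedness in the same way.
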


\begin{proof}
    The first statement is immediate from
    \zcref{cor:monoid_ssi_1,
    lem:chromatic_number_of_ssi}.

    For the second statement, it suffices to show that $G_S^X$ is bipartite.
    By working one component at a time, we may assume $X$ is transitive. 
    Fix $x_0 \in X$.
    For any $x \in X$, there exist $g,h \in \N^d$ such that $gx = hx_0$. 
    We claim that $h - g = h' - g'$ for any other choice of $g',h' \in \N^d$ with $g'x = h'x_0$. 
    Indeed, $(g + h')x_0 = (g+g')x = (h + g')x_0$, so $g+h' = h+g'$ by freeness. 
    We thus define $\phi:X \to \Z^d$ by setting $\phi(x) = h-g$ for any/all such $g,h$.

    We now claim that $\phi$ is a homomorphism from $G_S^X$ to $G_S^{\Z^d}$, the usual Cayley graph of $\Z^d$. 
    It suffices to show that it is $\N^d$-equivariant. 
    Let $k \in \N^d$ and $x \in X$. 
    Let $g,h \in \N^d$ with $gx = hx_0$. 
    Then $(g + k)x = (h + k)x_0$, so $\phi(kx) = h + k - g = \phi(x) + k$. 
\end{proof}

Other results about Borel colorings of Schreier graphs of $\N^d$ actions with respect to the generating set $\{e_0,\ldots,e_{d-1}\}$ were obtained by Palamourdas \cite{palamourdas}. 

We end this section by noting the following combination of
\zcref{cor:monoid_ssi_1, lem:asdim_of_finite_ssi}:

\begin{cor}\label{cor:comm_monoid_asdim_eq}
    Let $M$ be a finitely generated commutative monoid and $X$ a bounded-to-one Borel $M$-set. 
    Then $\asdim_B(M \car X) = \asdim(M \car X)$. 
\end{cor}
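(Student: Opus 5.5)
The plan is to combine \zcref{cor:monoid_ssi_1} with \zcref{lem:asdim_of_finite_ssi}. Fix a finite generating set $R \Subset M$ and consider the Schreier graph $G_R^X$. This is a locally finite Borel graph: each vertex $x$ has at most $|R|$ forward neighbours $rx$, and at most $\sum_{r \in R} k_r$ backward neighbours, where $k_r$ bounds the size of the fibers of $r$. Borelness is immediate from the action being Borel. Both sides of the desired equality are computed from this graph, and neither depends on the choice of $R$.

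By \zcref{cor:monoid_ssi_1}, $\assi_B(G_R^X) \le 1 < \infty$. \zcref{lem:asdim_of_finite_ssi} applies to any locally finite Borel graph with finite Borel asymptotic smooth separation index. It therefore yields $\asdim_B(G_R^X) = \asdim(G_R^X)$, which is exactly $\asdim_B(M \car X) = \asdim(M \car X)$. This includes the case where both sides are infinite: the inequality $\asdim \le \asdim_B$ is trivial, and the lemma gives the reverse inequality, so infinitude on one side forces it on the other.

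The only step that needs real care is \zcref{lem:asdim_of_finite_ssi} itself, which the paper takes over from \cite[Theorem 4.2]{CJMST23} with $\asi_B$ replaced by $\assi_B$. If I had to verify it, I would note that finite $\asi_B$ and finite $\assi_B$ are equivalent by \zcref{lem:finite_ssi_eq_finite_asi}. So one may simply cite the original $\asi_B$ version of \cite[Theorem 4.2]{CJMST23} together with $\asi_B(G_R^X) \le 2\cdot 1 + 1 = 3$. With that, the corollary is a direct two-line combination and I expect no obstacle.
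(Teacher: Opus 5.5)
Your proposal is correct and is exactly the paper's argument: the corollary is obtained by combining \zcref{cor:monoid_ssi_1} ($\assi_B \le 1$) with \zcref{lem:asdim_of_finite_ssi}. Your fallback, using \zcref{lem:finite_ssi_eq_finite_asi} to get $\asi_B \le 3$ and then citing the original $\asi_B$ version of \cite[Theorem 4.2]{CJMST23}, is the same alternative the paper itself points out.
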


However, even in the classical setting, we do not have a full understanding of the dimension of non-free commutative monoid actions. 

\begin{qn}\label{qn:classical_asdim}
    Let $M$ be a finitely generated commutative monoid
    and $X$ a (bounded-to-one) $M$-set.
    Is $\asdim(M \car X) < \infty$?
    Is $\asdim(M \car X) \le \rk(M)$? 
\end{qn}

The first question here was \zcref{qn:intro_classical_dimension} in the introduction. 
A positive answer to this question would yield a very clean proof of the non-finitely-generated case of \zcref{thm:main} thanks to \zcref{cor:comm_monoid_asdim_eq} and the following generalization of \zcref{thm:hyperfinite_of_dimension}:

\begin{thm}[Conley--Jackson--Marks--Seward--Tucker-Drob {\cite[Theorem 1.10]{CJMST23}}]\label{thm:hyperfinite_of_dimension_union}
    Let $G_0 \subseteq G_1 \subseteq \cdots$ be an increasing sequence of locally finite Borel graphs such that for each $n$, $\asdim_B(G_n) < \infty$. 
    Then $\bigcup_n E_{G_n}$ is hyperfinite.
\end{thm}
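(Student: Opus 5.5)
The idea is to follow the approach of \cite{CJMST23}: reduce hyperfiniteness of $\bigcup_n E_{G_n}$ to building, for each $n$, a Borel finite subequivalence relation $F_n \subseteq E_{G_n}$, with the $F_n$ increasing and $\bigcup_n F_n = \bigcup_n E_{G_n}$. Write $X$ for the common vertex set and $d_n := \asdim_B(G_n) < \infty$.

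\textbf{Step 1: witnesses at every level.} For each $n$ and each scale $s$, \zcref{lem:colored_eq_uncolored} supplies a Borel colored dimension $d_n$ witness $X = U^{n,s}_0 \sqcup \cdots \sqcup U^{n,s}_{d_n}$ for $G_n$. Every component of $G_n^s \uhr U^{n,s}_i$ has bounded $\rho_{G_n}$-diameter, hence is finite by local finiteness.

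\textbf{Step 2: an increasing sequence of finite relations.} We build Borel finite equivalence relations $F_0 \subseteq F_1 \subseteq \cdots$ recursively. Given $F_{n-1}$, its classes have bounded $\rho_{G_n}$-diameter, say at most $D_{n-1}$; this is the point where a uniform radius bound is needed, which colored witnesses provide. Pick $s_n$ much larger than $D_{n-1}$ and $n$, and take the witness $(U_i^{n,s_n})_i$. Then run the standard $d_n+1$-stage absorption argument used to prove \cite[Theorem 1.10]{CJMST23}.
\begin{itemize}
\item Process the colors $i = 0, \dots, d_n$ in turn.
\item At stage $i$, take the components of $G_n^{s_n} \uhr U_i$, enlarged to be unions of $F_{n-1}$-classes and of the sets built at earlier stages. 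Scales decrease geometrically across stages, so that an enlargement at stage $i$ does not chain together distinct components.
\item Merge these enlarged sets with the relation built so far.
\end{itemize}
The resulting relation $F_n$ is finite, Borel, contains $F_{n-1}$, and has bounded diameter.

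\textbf{Step 3: exhaustion.} We must show that any $x E_{G_m} y$ become $F_n$-related for some $n$. We arrange that the fraction of points not yet $G_m$-connected within their $F_n$-class tends to zero. Following \cite{CJMST23}, we instead guarantee directly that for $n \ge m$, every $G_m$-edge lying inside $B_{G_n}(x, n)$ eventually becomes internal to $F_n$. This holds because $x$ lies in some $U_i$, and the $s_n$-neighbourhood of $x$ in the same color class is absorbed at stage $i$.

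The main obstacle is Step 2. The absorption must keep classes finite with uniformly bounded diameter while growing scales; a naive union of components across colors can percolate. We will follow the geometric scale-decrease trick of \cite[Lemma 3.1]{CJMST23}, taking the scale at stage $i$ to be $s_n \cdot 3^{d_n - i}$ with $s_n \gg D_{n-1}$, so that each enlarged set has diameter at most $3 s_{\text{stage}}$ and cannot bridge two components at the next scale.
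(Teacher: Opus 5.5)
There is a genuine gap, and it sits exactly where the theorem is hard. The core of your Step 2 is ``run the standard absorption argument used to prove \cite[Theorem 1.10]{CJMST23}'', which is the statement itself. The concrete mechanism you offer in its place does not work. In a colored witness at scale $s$, distinct components of $G_n^{s}\uhr U_i$ are more than $s$ apart and have diameter at most the witness diameter $r_n(s)$. Finite asymptotic dimension gives no relation between $r_n(s)$ and $s$: a bound $r_n(s)=O(s)$ is Assouad--Nagata dimension, as remarked in \zcref{sec:quant_asdim}. So your claim that each enlarged set has diameter at most $3s_{\text{stage}}$ is unfounded. Absorbing $F_{n-1}$-classes is harmless, since $D_{n-1}<s_n$ means each such class meets at most one component of a given colour. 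But a set built at an earlier stage may have diameter comparable to $r_n(s_n)\gg s_n$, so it can meet many components of a later colour, and merging along such sets can chain indefinitely. Shrinking the scale from stage to stage only makes this easier, not harder. The obvious repair is to make each stage's separation exceed the diameter of everything built earlier. That forces increasing scales with a separate witness per stage, and then the colour classes you process no longer cover $X$. The radii $is,(i+1)s$ in the proof of \cite[Lemma 3.1]{CJMST23}, reproduced in \zcref{colored-uncolored-hard}, only define the colour classes from an uncolored witness; there is no absorption step there to borrow.

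Step 3 fails for the matching reason. Membership in $U_i$ only puts $B(x,s_n)\cap U_i$ into the component of $x$. A $G_m$-neighbour $y$ of $x$ of a different colour lies in a different component. Under absorption by intersection, as you describe it, nothing forces the two resulting sets to merge. Since this can recur at every step $n$, the edge $\{x,y\}$ need never enter $\bigcup_n F_n$. The ``fraction of points'' formulation is measure-theoretic and says nothing about Borel hyperfiniteness. Absorbing cross-colour edges without percolation is the entire content of the theorem. This is what a careful coupling of scales across successive steps is for; compare the dimension bound $\lfloor a_{n-1}/(24b_{n-2})\rfloor$ at scale $a_n$ in \zcref{increasing-union-full}. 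For reference, the paper does not reprove the statement. It cites \cite[Theorem 1.10]{CJMST23} and notes that it is the case $(\mathcal D,\mathcal R)=(O(1),\text{all functions})$ of \zcref{increasing-union-simple}, hence a consequence of \cite[Theorem C]{GMRS26}.
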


Unfortunately we were not able to answer this question. 
Instead, we managed to get good enough quantitative bounds on the existence of dimension witnesses at various scales to still conclude hyperfiniteness. 
The details of this argument constitute the remainder of the paper.

\section{Quantitative refinements of asymptotic dimension}
\label{sec:quant_asdim}

We begin with more precise quantitative versions of the statements from \zcref{sec:asdim}, starting with \zcref{lem:asdim_local_soft}. 


\begin{lem}\label{lem:doubling}
  For every $d \in \N$,
  there exists $D \in \N$ such that
  for every abelian group $\Gamma$,
  every $A \Subset \Gamma$ with $|A| \le d$,
  and every $s \in \N$,
  we have $|A^{4s}| \le D|A^s|$. 
\end{lem}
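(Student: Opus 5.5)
The plan is to prove the bound directly in $\Gamma$ by a rounding argument. Write $\Gamma$ additively, so $A^t$ is the set of sums of exactly $t$ elements of $A$, counted with repetition. Enumerate $A=\{a_1,\dots,a_k\}$ with $k\le d$. Every element of $A^{4s}$ has the form $\sum_i v_i a_i$ with $v\in\N^k$ and $\sum_i v_i=4s$. The idea is to split $v=w+u$, where $w$ ranges over a set of boundedly many vectors (bounded in terms of $d$ alone) and $u$ has total weight at most $s$. Since $\sum_i w_ia_i$ is then one of boundedly many group elements, this gives $A^{4s}\subseteq \bigcup_{w}\bigl(\sum_i w_ia_i + A^{|u|}\bigr)$.

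The first ingredient is a monotonicity observation that lets us use weight \emph{at most} $s$ rather than exactly $s$. Fix any $a\in A$ (the case $A=\emptyset$ is trivial). For $s'\le s$ we have $(s-s')a+A^{s'}\subseteq A^s$, and translation is injective, so $|A^{s'}|\le|A^s|$. Hence each translate $g+A^{s'}$ with $s'\le s$ has size at most $|A^s|$.

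The main step is to choose the rounding scale so that the bounded family $W$ works. This is where care is needed: insisting on remainders of weight exactly $s$ forces an unbounded correction term, and the monotonicity observation is what removes that obstacle. If $s<d$, then $|A^{4s}|\le |A|^{4s}\le d^{4d}$ while $|A^s|\ge1$, so any $D\ge d^{4d}$ handles these $s$. So assume $s\ge d$ and set $m=\lfloor s/d\rfloor\ge1$, noting that $m> s/d-1$ gives $s/m<2d$. Given $v$ as above, put $q_i=\lfloor v_i/m\rfloor$, $w=mq$ and $u=v-w$. Then $u\in\N^k$ and each $u_i<m$, so $|u|<km\le dm\le s$. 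Also $q_i\le 4s/m< 8d$. Therefore $w$ lies in the set $W=\{mq: q\in\{0,\dots,8d\}^k\}$, which has at most $(8d+1)^d$ elements.

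Consequently
\[
  A^{4s}\subseteq \bigcup_{q\in\{0,\dots,8d\}^k}\ \bigcup_{j\le s}\Bigl(\sum_i mq_ia_i + A^{j}\Bigr).
\]
To bound the union over $j$, refine the observation above. Fix $a\in A$. For each fixed $q$, the sets $\sum_i mq_ia_i+(s-j)a+A^j$ for $j\le s$ all lie inside the single set $\sum_i mq_ia_i + A^s$. On the other hand, the $j$-th piece $\sum_i mq_ia_i+A^j$ is a translate of one of them by $-(s-j)a$, so a priori these pieces could be $s+1$ different sets, which would lose a factor of $s$.

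To avoid this, keep track of $|u|$ itself in the rounding. Since $|v|=4s$ is fixed, we have $|u|=4s-m|q|$, so $|u|$ is determined by $q$. Each $q$ therefore contributes a single set $\sum_i mq_ia_i+A^{4s-m|q|}$, whose size is at most $|A^s|$ because $4s-m|q|=|u|\le s$. This yields $|A^{4s}|\le (8d+1)^d|A^s|$ for $s\ge d$. Combining with the small-$s$ case, we may take $D=\max\bigl((8d+1)^d,\,d^{4d}\bigr)$, which does not depend on $\Gamma$, $A$ or $s$.
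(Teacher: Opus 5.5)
Your proof is correct and is essentially the paper's argument: both round the exponent vector coordinatewise down to multiples of about $s/d$, which leaves at most $(8d+1)^d$ translation vectors and a remainder of weight at most $s$, and both use a crude bound for $s<d$. The only difference is cosmetic. The paper first translates $A$ so that $1\in A$, which makes $A^t$ the image of the $\ell^1$-ball and hence nested in $t$; you instead keep exact-length sums and compensate with the translation bound $|A^{j}|\le|A^s|$ for $j\le s$, together with the observation that the remainder length $4s-m|q|$ is determined by $q$.
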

\begin{proof}
 If $A=\varnothing$, there is nothing to prove.  Choose $a_0\in A$ and put $B=a_0^{-1}A$.  Since $\Gamma$ is abelian, for every $t\in\N$ we have $B^t=a_0^{-t}A^t$, and hence $|B^t|=|A^t|$.  Thus, replacing $A$ by $B$, we may assume that $1\in A$.

  Write $A=\{1,a_1,\ldots,a_q\}$, where $q\le d-1$.  If
  $q=0$, the assertion is immediate.  Assume $q\ge1$, and let
  $\varphi:\N^q\to\Gamma$ be given by
  $\varphi(n_1,\ldots,n_q)=a_1^{n_1}\cdots a_q^{n_q}$.  If
  \[
    \Sigma_t:=\{u\in\N^q:\|u\|_1\le t\},
  \]
  then $A^t=\varphi(\Sigma_t)$.

  We claim that $\Sigma_{4s}$ can be covered by a number, depending only
  on $q$, of translates of $\Sigma_s$.  When $s<q$, this follows from
  the crude bound $|\Sigma_{4s}|\le(4q+1)^q$.  Suppose $s\ge q$, and put
  $h=\lfloor s/q\rfloor$.  For $u=(u_i)_{i<q}\in\Sigma_{4s}$, set
  $v_i=h\lfloor u_i/h\rfloor$.  Then
  $u-v\in\Sigma_s$, while each $v_i/h$ lies in
  $\{0,1,\ldots,8q\}$.  Thus at most $(8q+1)^q$ translation vectors
  $v$ occur.  Applying $\varphi$ to such a covering gives
  \[
    A^{4s}=\varphi(\Sigma_{4s})
      \subseteq \bigcup_{j=1}^{D} \varphi(v_j)A^s,
  \]
  with $D=D(d)$, and the cardinality estimate follows.
\end{proof}

The following argument is essentially the same one used in \cite{CJMST23} to establish finite Borel asymptotic dimension of actions of polynomial growth groups.  

\begin{prop}
    Let $\Gamma$ be an abelian group, $1 \in S \Subset \Gamma$ of cardinality $d \in \N$, and $s \in \N$. There exist $D \in O_{d}(1)$ and $C \in \poly_d(s)$ such that
    \[
      \Pi_{\dim_{S^{\pm s}, S^{\pm 2s}, D}}
      \in \LOCAL_{S^{\pm 1}}(C \log^*(n)).
    \]
\end{prop}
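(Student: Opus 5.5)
We will use the standard polynomial-growth argument from \cite{CJMST23}: build a maximal independent set of centers at scale comparable to $s$, then assign each point to a nearby center so that balls of radius $s$ see boundedly many centers. We carry this out with local algorithms in $\Gamma$, with locality $S^{\pm 1}$, starting from an injective labeling.

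Work in the Cayley graph of $\Gamma$ with respect to $S^{\pm1}$, so that balls are $S^{\pm t}$. The LCL $\Pi_{\dim_{S^{\pm s},S^{\pm 2s},D}}$ asks for a label $l(g)\in S^{\pm 2s}$ at each $g$ such that, for each $g$, the points $l(hg)\,hg$ with $h\in S^{\pm s}$ take at most $D+1$ values.

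\textbf{Step 1: centers.} Compute a maximal $S^{\pm 2s}$-separated set $Y$. Concretely, $Y$ is a maximal independent set in the graph on $\Gamma$ joining $g,g'$ when $g^{-1}g'\in S^{\pm 2s}\setminus\{1\}$.
\begin{itemize}
\item This graph has degree at most $|S^{\pm 2s}|\le (2s+1)^{d}$.
\item By Linial / Cole--Vishkin type algorithms, an MIS is computable in $O(\Delta^2+\log^* n)$ rounds of the power graph. Each round of the power graph costs $2s$ rounds in $S^{\pm1}$.
\item The total is therefore $C\log^* n$ rounds with $C\in\poly_d(s)$.
\end{itemize}
The crude $\Delta^2$ bound suffices because $\Delta$ is polynomial in $s$. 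Formally, we produce an $S^{\pm t}$-local reduction from the MIS labeling to $\Pi_{S^{\pm t}\hookrightarrow n}$ by composing reductions via \zcref{lem:compose_reduction} and \zcref{lem:local_reduction_additive}.

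\textbf{Step 2: assignment.} By maximality, every $g$ has some $y\in Y$ with $y g^{-1}\in S^{\pm 2s}$. Let $g$ pick such a $y$ using a fixed total order on $S^{\pm 2s}$, and set $l(g)=yg^{-1}$. This costs $2s$ more rounds, and $l$ takes values in $S^{\pm 2s}$ as required.

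\textbf{Step 3: bounding $D$.} For a fixed $g$, every point $hg$ with $h\in S^{\pm s}$ is sent to a center in $S^{\pm 3s}g$. Distinct centers are $S^{\pm 2s}$-separated, so the sets $S^{\pm s}y$ for distinct centers $y$ are pairwise disjoint. Each is contained in $S^{\pm 4s}g$, and each has size $|S^{\pm s}|$. Hence the number of centers seen is at most
\[
|S^{\pm 4s}|/|S^{\pm s}|.
\]
Applying \zcref{lem:doubling} to $A=S\cup S^{-1}$, which has size at most $2d$, bounds this ratio by $D'(2d)$. So $D=D'(2d)-1\in O_d(1)$.

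A small subtlety arises in the separation argument. Disjointness of $S^{\pm s}y$ and $S^{\pm s}y'$ needs $y^{-1}y'\notin S^{\pm 2s}$, using $(S^{\pm s})^{-1}S^{\pm s}=S^{\pm 2s}$, which holds since $1\in S$. This is exactly what $2s$-separation gives.

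\textbf{Main obstacle.} The delicate point is the quantitative round bound: making the MIS computation in the power graph genuinely $\poly_d(s)\log^* n$ within the formal definition of $\rounds$. This requires two things:
\begin{itemize}
\item the identifiers from $\Pi_{S^{\pm t}\hookrightarrow n}$ are injective on the relevant neighborhoods;
\item the dependence of the standard $O(\Delta^2+\log^*n)$ coloring-then-greedy-MIS algorithm on $\Delta$ is polynomial.
\end{itemize}
We would first reduce the identifiers to an $O(\Delta^2)$-coloring in $O(\log^* n)$ power-graph rounds, then run a greedy pass over the color classes, with $O(\Delta^2)$ rounds, to get the MIS.
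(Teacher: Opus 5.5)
Your proposal is correct and is essentially the paper's proof. The paper likewise computes a maximal independent set in the Cayley graph $G^\Gamma_{S^{\pm 2s}}$ via Linial in $\poly_d(s)+O(\log^* n)$ power-graph rounds (so $\poly_d(s)\log^* n$ rounds with respect to $S^{\pm 1}$). It then assigns each point to the $<$-least center in $S^{\pm 2s}g$, and bounds the number of centers by packing the disjoint translates $S^{\pm s}y$ into $S^{\pm 4s}g$ and applying \zcref{lem:doubling} to a set of size $2d$. The one bookkeeping difference is that the paper makes the assignment reduction $S^{\pm 3s}$-local rather than spending your $2s$ extra rounds: the disjointness argument needs the MIS labeling to be valid at every possible center in $S^{\pm 3s}g$, and this costs only $s$ more rounds.
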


\begin{proof}
    Fix $d$ and choose $D$ as in \zcref{lem:doubling} with respect to $2d$.
    By work of Linial \cite{linial}, 
    there is a $O(1) \log^*n + \poly(\Delta)$-round local algorithm for finding a maximal independent set in graphs of maximum degree $\Delta \in \N$. 
    (Linial's result is about $\Delta+1$-coloring, but a maximal independent set can easily be obtained from such a coloring greedily in $\Delta+1$ additional rounds.)

    Consider the Cayley graph $G := G_{S^{2s}}^\Gamma$. $G$ is regular with degree $|S^{\pm 2s}| - 1 \le (4s+1)^d$, so, letting $\Pi$ be the $\Gamma$-LCL whose solutions are maximal $G$-independent sets (identified with their characteristic functions), 
    \begin{align*}
        \Pi \in \LOCAL_{S^{\pm 2s}}(O(1) \log^*(n) + \poly_d(s)) \subseteq \LOCAL_{S^{\pm 1}} (2s[(O(1) \log^*(n) + \poly_d(s))]) \subseteq \\ \LOCAL_{S^{\pm 1}} (\poly_d(s) \log^* n).
    \end{align*}
    We now give a $S^{\pm 3s}$-local reduction from $\Pi_{\dim_{S^{\pm s}, S^{\pm 2s}, D}}$ to $\Pi$, finishing the proof by \zcref{lem:local_reduction_additive}. 
    Let $<$ be a linear order on $S^{\pm 2s}$. Our reduction will be the local algorithm 
    $B : 2^{S^{\pm 3s}} \to S^{\pm 2s}$ defined by 
    \[ B(l) = \begin{cases} \min_< (l\inv(1) \cap S^{\pm 2s}) & \text{if } l\inv(1) \cap S^{\pm 2s} \neq \emptyset \\ 
                            1 & \text{otherwise.}
              \end{cases}\]
    To prove $B$ is a reduction between the claimed LCLs, suppose $l : \Gamma \to 2$ is a $\Pi$-labeling of $S^{\pm 3s}$. 
    By maximality, for all $g \in S^{\pm s}$ there is $h \in S^{\pm 2s}$ such that $l(hg) = 1$. Then $(B * l)(g)$ is the $<$-least such $h$. 
    We need to show $|\{ (B * l)(g) \cdot g \mid g \in S^{\pm s} \}| \le D$.
    By the above, this is a subset of $F :=l\inv(1) \cap S^{\pm 3s}$.
    Since $l$ is a $\Pi$-labeling of $S^{\pm 3s}$, $F$ is $G$-independent. Thus the sets $S^{\pm s} \cdot g$ for $g \in F$ are pairwise disjoint. These are contained in $S^{\pm 4s}$ and have cardinality $|S^{\pm s}|$, so by choice of $D$ we get $|F| \le D$. 
\end{proof}

\begin{cor}\label{cor:monoid_asdim_local}
   Let $M$ be a cancellative commutative monoid, $1 \in S \Subset M$ of cardinality $d \in \N$ with an lcm, and $s \in \N$. There exist $D \in O_{d}(1)$ and $C \in \poly_d(s)$ such that
    \[\Pi_{\dim_{S^s, S^{4s}, D}} \in \LOCAL_S(C \log^*(n)).\]
\end{cor}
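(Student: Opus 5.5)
We derive this from the preceding proposition by passing to the group completion, much as in the sketch proof of \zcref{lem:asdim_local_soft}. Since $M$ is cancellative and commutative, it embeds in $\Gamma := M^{\gp}$. Let $\hat s \in S$ be an lcm of $S$. Apply the previous proposition to $\Gamma$, $S$ and $s$. This gives $D \in O_d(1)$ and $C_0 \in \poly_d(s)$ with
\[
  \Pi_{\dim_{S^{\pm s}, S^{\pm 2s}, D}} \in \LOCAL_{S^{\pm 1}}(C_0 \log^*(n)).
\]

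\textbf{Step 1: shrink and shift the LCL.} An $S^{\pm s}$-labeling with few distinct values $l(g)g$ restricts to an $S^s$-labeling with the same property. So the restriction map $(S^{\pm 2s})^{S^{\pm s}} \to (S^{\pm 2s})^{S^s}$ shows that $\Pi_{\dim_{S^s, S^{\pm 2s}, D}}$ has round complexity at most that of the bigger problem. This can be formalized as a trivial $\{1\}$-local reduction: the identity labeling works, since the constraint checked on the smaller window is implied by the one on the larger window.

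Next apply \zcref{lem:radius_shift} with $m = \hat s^{2s}$. This turns the range $S^{\pm 2s}$ into $\hat s^{2s} S^{\pm 2s}$. Because each $a \in S$ has some $a' \in S$ with $aa' = \hat s$, we get $\hat s a^{-1} = a' \in S$. Hence $\hat s^{2s} S^{\pm 2s} \subseteq S^{4s}$. By \zcref{lem:local_reduction_additive} with $s=0$, we conclude
\[
  \Pi_{\dim_{S^s, S^{4s}, D}} \in \LOCAL_{S^{\pm1}}(C_0 \log^* n),
\]
after noting that a labeling into a smaller range is still valid when regarded as a labeling into a larger range.

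\textbf{Step 2: transfer back to $M$.} The LCL $\Pi_{\dim_{S^s, S^{4s}, D}}$ has locality $S^s \subseteq M$, so it is an $M$-LCL. By \zcref{lem:rounds_monoid_vs_rounds_group}, applied with $R = S$ (which has an lcm), we get $\rounds_{\Pi,S} \le 2\rounds_{\Pi,S^{\pm1}}$. This holds regardless of whether rounds are computed in $M$ or in $\Gamma$, by the lemma on changing the ambient monoid. Taking $C = 2C_0 \in \poly_d(s)$ finishes the proof.

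\textbf{Main obstacle.} The delicate point is Step 1. We must check that the radius shift and the restriction of the window really are local reductions in the paper's formal sense, that is, they compose correctly under $*$. We must also check that the inclusion $\hat s^{2s} S^{\pm 2s} \subseteq S^{4s}$ holds exactly. For this, write each element of $S^{\pm 2s}$ as a product of at most $2s$ factors from $S \cup S^{-1}$, padding with $1 \in S$. Then absorb one $\hat s$ into each factor: $\hat s a \in S^2$ and $\hat s a^{-1} \in S$. This gives exponent at most $4s$, using $1 \in S$ to pad.
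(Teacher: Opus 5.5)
Your proposal is correct and is essentially the paper's own argument. You pass to the group completion $M^{\mathrm{gp}}$ and invoke the preceding proposition. You then shrink the window from $S^{\pm s}$ to $S^s$ and shift the range by $\hat s^{2s}$ via the radius-shift lemma, using $\hat s^{2s}S^{\pm 2s}\subseteq S^{4s}$. Finally you return to $M$ with the factor-of-two comparison between round complexity over $S$ and over $S^{\pm1}$. Your explicit check of that inclusion, absorbing one copy of $\hat s$ into each factor, fills in a detail the paper states without proof.
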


\begin{proof}
    Let $\hat s$ be an lcm of $S$.
    We work in $M^{\gp}$.
    By \zcref{lem:rounds_monoid_vs_rounds_group}, it suffices to show that this LCL is in $\LOCAL_{S^{\pm 1}}(C \log^*(n))$ for such a $D$ and $C$. 
    The previous proposition gives this for $\Pi_{\dim_{S^{\pm s}, S^{\pm 2s}, D}}$.
    Replacing the first $S^{\pm s}$ with $S^s$ only makes the LCL easier, 
    and then since $\hat s^{2s}S^{\pm 2s} \subseteq S^{4s}$
    we have a $\{1\}$-local reduction from our desired LCL to this one by \zcref{lem:radius_shift}. 
\end{proof}

We now turn to quantitative versions of \zcref{lem:colored_eq_uncolored}. 

\begin{lem}\label{colored-uncolored-hard}
  Let $G$ be a locally finite Borel graph
  and $d, s, r \in \N$.
  \begin{enumerate}[label=(\arabic*)]
    \item\label{colored-to-uncolored-hard-easy-dir}
      If $G$ has a Borel colored
      dimension $d$ witness
      at scale $2s$
      with diameter $r$,
      then it has a Borel uncolored
      dimension $d$ witness
      at scale $s$
      with radius $r$.
    \item\label{uncolored-to-colored-hard-hard-dir}
      If $G$ has a Borel uncolored
      dimension $d$ witness
      at scale $(d+1)s$
      with radius $r$,
      then it has a Borel colored
      dimension $d$ witness
      at scale $s$
      with diameter $2r + ds$.
  \end{enumerate}
\end{lem}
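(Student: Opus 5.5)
For part \zcref{colored-to-uncolored-hard-easy-dir}, I will use the conversion already sketched after \zcref{defn:borel_dim}. Given the Borel partition $X = U_0 \sqcup \cdots \sqcup U_d$ at scale $2s$ with diameter $r$, let $E$ be the equivalence relation whose classes are the connected components of the graphs $G^{2s} \uhr U_i$. Each class has $\rho_G$-diameter at most $r$, so $E$ is finite and Borel, hence smooth. Fix a Borel transversal $T$ and set $f(x) = $ the unique point of $T \cap [x]_E$. Then $\rho_G(x,f(x)) \le r$, so $r$ is a radius. For the scale condition, let $x \in X$ and $y,z \in B_G(x,s)$ with $y,z \in U_i$. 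Then $\rho_G(y,z) \le 2s$, so $y,z$ are adjacent in $G^{2s}\uhr U_i$ and lie in the same $E$-class. Hence $B_G(x,s)$ meets at most one class per color, so $|f[B_G(x,s)]| \le d+1$.

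For part \zcref{uncolored-to-colored-hard-hard-dir}, I follow the proof of \cite[Lemma 3.1]{CJMST23}. Let $f$ be the given uncolored witness at scale $(d+1)s$ with radius $r$, and let $E$ be the relation whose classes are the fibers of $f$. Each class lies in $B_G(c,r)$ for its center $c = f(x)$, so it is finite and has diameter at most $2r$. For $x \in X$, let $n(x) = |f[B_G(x,s)]| \in \{1,\dots,d+1\}$. The idea is to assign colors by thresholding with shrinking radii: for $j \le d$, let
\[
  V_j = \{x : |f[B_G(x,(d+1-j)s)]| \ldots\},
\]
with the precise thresholds to be tuned below. More concretely, I define the level $\ell(x) \le d$ as the largest $j$ such that $|f[B_G(x,(j+1)s)]| \ge j+1$; it is at least $0$ since $x \in B_G(x,s)$. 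I then put $x \in U_{\ell(x)}$. All of these sets are Borel because $G$ is locally finite and Borel, so the balls vary in a Borel way.

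The core of the argument is to show that each component of $G^s \uhr U_j$ meets at most $j+1$ classes of $E$, and consists of points within distance $s$ steps of one another. I expect this to be the main obstacle. The plan is as follows. Suppose $x, y \in U_j$ with $\rho_G(x,y) \le s$. Then $B_G(y,(j+1)s) \subseteq B_G(x,(j+2)s)$. By maximality of $\ell(x)=j$, the ball $B_G(x,(j+2)s)$ meets at most $j+1$ centers. Both $f[B_G(x,(j+1)s)]$ and $f[B_G(y,(j+1)s)]$ have size at least $j+1$ and are contained in that set of size at most $j+1$, so they coincide. I will set $F_x := f[B_G(x,(j+1)s)]$, a set of exactly $j+1$ centers. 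This set is then constant along $G^s$-paths in $U_j$, using $(j+2)s \le (d+1)s$, which is where the scale hypothesis enters. The case $j = d$ needs care because level $d+1$ does not exist. There I use the witness bound directly: $|f[B_G(x,(d+1)s)]| \le d+1$, so $F_x$ is the full image set and is constant along paths.

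Consequently, every point $z$ of the component satisfies $f(z) \in F_x$, since $z \in B_G(z,(j+1)s)$. So $z$ lies within $r$ of one of $j+1 \le d+1$ fixed centers. I still need to bound the diameter by $2r+ds$, and for that I need the fixed centers to be close to one another. Each center in $F_x$ is within $r + (j+1)s$ of $x$. This gives a naive bound of order $2r + 2(j+1)s$. To reach exactly $2r+ds$, I expect to have to adjust the radii in the definition of $\ell$, for instance by using balls $B_G(x,js)$ and comparing against scale $(j+1)s$, so that the $r$-neighborhoods of the centers are spread apart by at most $ds$. I will fix those offsets at this step and check that the claimed constant $2r+ds$ comes out.
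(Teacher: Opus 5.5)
\textbf{Part (1)} is correct and is the paper's argument: take the components of $G^{2s}\upharpoonright U_i$ as classes and define $f$ via a Borel transversal.

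\textbf{Part (2)} has a genuine gap at the top level $j=d$, and the diameter bound is not established.

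With your indexing, showing that $F_x=f[B_G(x,(j+1)s)]$ is constant along $G^s$-edges inside $U_j$ needs the bound $|f[B_G(x,(j+2)s)]|\le j+1$. For $j<d$ this follows from the maximality of $\ell(x)$. For $j=d$ it would need control at radius $(d+2)s$, which the hypothesis does not give. The bound $|f[B_G(x,(d+1)s)]|\le d+1$ says nothing about whether $f[B_G(x,(d+1)s)]=f[B_G(y,(d+1)s)]$ when $\rho_G(x,y)\le s$.

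Your $U_d$ can in fact fail completely. Take $G$ the path graph on $\Z$, $d=1$, $s\ge 1$. Let the fibers of $f$ be the intervals $[4sk,4sk+4s-1]$, each mapped to its point $4sk+2s$.
\begin{itemize}
\item A $2s$-ball has $4s+1$ points, so it meets at most two intervals. Hence $f$ is an uncolored dimension $1$ witness at scale $2s$ with radius $2s$.
\item But every $2s$-ball meets exactly two intervals, so $\ell\equiv 1$ and $U_1=\Z$.
\item Then $G^s\upharpoonright U_1$ is connected with infinite diameter, whereas the target bound is $2r+ds=5s$.
\end{itemize}

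The fix is the shift you hint at in your last sentence, and it is what the paper does: compare radii $js$ and $(j+1)s$ instead of $(j+1)s$ and $(j+2)s$.
\begin{itemize}
\item The paper sets $U_j=\{x : f[B(x,js)]=f[B(x,(j+1)s)]\}$.
\item These sets cover $X$, because $f[B(x,0)]\subseteq f[B(x,s)]\subseteq\cdots\subseteq f[B(x,(d+1)s)]$ are $d+2$ sets with cardinalities in $[1,d+1]$. This is the only place the scale $(d+1)s$ is used.
\item In your formulation, let $\ell(x)$ be the largest $j\le d$ with $|f[B(x,js)]|\ge j+1$. Here $j=0$ always qualifies. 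For $j<d$, maximality bounds $|f[B(x,(j+1)s)]|$ by $j+1$; for $j=d$, the witness hypothesis does. Either way $x$ lands in the paper's $U_j$, and you get a genuine partition.
\item If $x,x'\in U_j$ with $\rho(x,x')\le s$, then $f[B(x,js)]\subseteq f[B(x',(j+1)s)]=f[B(x',js)]$, and symmetrically. So $f[B(\cdot,js)]$ is constant on components.
\end{itemize}

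For the diameter, do not estimate distances between centers; that double-counts and gives your $2r+2(j+1)s$. Instead, for $x,x'$ in one component, $f(x)\in f[B(x',js)]$, so $f(x)=f(x'')$ for some $x''$ with $\rho(x',x'')\le js$. Hence $\rho(x,x')\le r+r+js\le 2r+ds$.
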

\begin{proof}
  Let $X$ be the vertex set of $G$.
  \begin{enumerate}[label=(\arabic*)]
    \item 
      See the two paragraphs preceding \zcref{lem:colored_eq_uncolored}. 
    \item
      Let $f : X \to X$
      be a Borel uncolored
      dimension $d$ witness
      at scale $(d+1)s$
      with radius $r$.
      For every $i \le d$,
      define the Borel set
      $U_i := \{x \in X : f[B(x, is)] = f[B(x, (i+1)s)]\}$.
      We claim that the family $(U_i)_{i \le d}$
      is a Borel colored
      dimension $d$ witness
      at scale $s$
      with diameter $2r + ds$.
    
      First of all,
      note that this is a cover,
      since for every $x \in X$,
      the increasing sequence
      $(f[B(x, is)])_{i \le d+1}$
      consists of $d+2$ sets of cardinality in $[1, d+1]$,
      so there must be two adjacent terms which are equal.
    
      Fix $i \le d$.
      Note that for $x, x' \in U_i$ with $\rho(x, x') \le s$,
      we have
      \[
        f[B(x, is)] 
        \subseteq f[B(x', (i+1)s)]
        = f[B(x', is)]
      \]
      and vice versa,
      so $f[B(x, is)] = f[B(x', is)]$.
      So the transitive closure of $\{(x, x') \in (U_i)^2 : \rho(x, x') \le s\}$
      is contained in the equivalence relation
      $E := \{(x, x') \in X^2 : f[B(x, is)] = f[B(x', is)]\}$.
      Now if $x E x'$,
      then setting $y := f(x)$,
      we have $y \in f[B(x, is)] = f[B(x', is)]$,
      so fixing $x'' \in B(x', is)$ with $f(x'') = y$,
      we have
      \begin{align*}
        \rho(x, x')
        & \le \rho(x, y) + \rho(y, x'') + \rho(x'', x') \\
        & \le r + r + is
      \end{align*}
      So every $E$-class has diameter $\le 2r + ds$.\qedhere
  \end{enumerate}
\end{proof}

\begin{defn}
  Let $\mc D$ and $\mc R$ be sets of non-decreasing functions $\N \to \N$,
  both closed under pointwise $\leq$. 
  We say that the pair $(\mc D, \mc R)$ is \emphd{linearly closed}
  if $\mc D$ is closed under precomposition with $O(n)$,
  and $\mc R$ is closed under composition on both sides with $O(n)$ and addition with $O(n)$.
\end{defn}

\begin{defn}
  Let $G$ be a locally finite (Borel) graph.
  \begin{enumerate}[label=(\roman*)]
    \item
    Given $d, r : \N \to \N$,
    we say that $G$ has (Borel) uncolored
    $(d, r)$-dimension growth
    if for all $s \in \N$,
    $G$ has a (Borel) uncolored
    dimension $d(s)$ witness
    at scale $s$
    with diameter $r(s)$.
  \item
    Let $(\mathcal{D}, \mathcal{R})$ be linearly closed.
    We say that $G$ has (Borel) uncolored
    $(\mathcal{D}, \mathcal{R})$-dimension growth
    if there exist $d \in \mathcal{D}$ and $r \in \mathcal{R}$
    such that $G$ has (Borel) uncolored $(d, r)$-dimension growth.
  \end{enumerate}
  We define these notions for colored dimension growth analogously. 
\end{defn}
Note that if $(\mathcal{D}, \mathcal{R})$ is linearly closed,
then both uncolored and colored
$(\mathcal{D}, \mathcal{R})$-dimension growth
are closed under (Borel) quasi-isometric embeddings,
i.e. if $X$ has a (Borel) quasi-isometric embedding into $Y$,
then if $Y$ has (Borel) uncolored/colored
$(\mathcal{D}, \mathcal{R})$-dimension growth,
then so does $X$.
In particular,
the notions of (Borel) uncolored/colored
$(\mathcal{D}, \mathcal{R})$-dimension growth
are well-defined for Borel actions of finitely generated monoids
in that they do not depend on the choice of generating set. 

Note that for every $k \in \N$,
having asymptotic dimension $< k$ is the same as having
$(\text{bounded by $k$}, \text{all functions})$-dimension growth,
and having Assouad--Nagata \cite{ANdim} dimension $< k$ is the same as having
$(\text{bounded by $k$}, O(n))$-dimension growth.
For this paper,
we will be concerned with 
$(O(\log n), \poly)$-dimension growth.

\begin{prop}\label{colored-uncolored-soft}
  Let $(\mc D, \mc R)$ be linearly closed,
  and suppose that the following hold:
  \begin{enumerate}[label=(\roman*)]
    \item
      for every $d \in \mc D$,
      there is some $d' \in \mc D$
      such that for all $s \gg 0$,
      we have $d(s (d'(s) + 1)) \le d'(s)$;
    \item
      for every $r \in \mc R$ and $d \in \mc D$,
      the function $s \mapsto r(s(d(s) + 1))$ is in $\mc R$.
  \end{enumerate}
  
  Then a locally finite (Borel) graph
  has (Borel) uncolored $(\mc D, \mc R)$-dimension growth
  iff it has (Borel) colored $(\mc D, \mc R)$-dimension growth.
\end{prop}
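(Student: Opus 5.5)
Both directions will be reduced to \zcref{colored-uncolored-hard}, with conditions (i) and (ii) and linear closedness used to keep the resulting functions in $\mc D$ and $\mc R$. One preliminary point: the uncolored growth notion is stated with "diameter $r(s)$", while \zcref{colored-uncolored-hard} speaks of radius. These differ by a factor of at most $2$, and $\mc R$ is closed under composition with $O(n)$, so the distinction can be ignored throughout.

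\emph{Colored $\Rightarrow$ uncolored.} Suppose $G$ has colored $(d,r)$-dimension growth with $d \in \mc D$ and $r \in \mc R$. Fix $s$ and take the colored witness at scale $2s$: it has dimension $d(2s)$ and diameter $r(2s)$. By \zcref{colored-uncolored-hard}\zcref{colored-to-uncolored-hard-easy-dir} we obtain an uncolored witness at scale $s$ with dimension $d(2s)$ and radius $r(2s)$. Hence $G$ has uncolored $(s \mapsto d(2s),\, s\mapsto r(2s))$-growth, possibly after a factor $2$ in the radius/diameter. Since $\mc D$ is closed under precomposition with $O(n)$ and $\mc R$ under composition with $O(n)$, both functions lie in the right classes. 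This direction needs neither (i) nor (ii).

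\emph{Uncolored $\Rightarrow$ colored.} Suppose $G$ has uncolored $(d,r)$-growth. Apply (i) to $d$ to obtain $d'\in\mc D$ with $d(s(d'(s)+1)) \le d'(s)$ for all $s \ge s_0$. For $s \ge s_0$, set $s^* := s(d'(s)+1)$ and take the uncolored witness at scale $s^*$. Its dimension is $d(s^*) \le d'(s)$. A dimension-$d(s^*)$ witness is also a dimension-$d'(s)$ witness, since the bound $|f[B(x,s^*)]| \le d'(s)+1$ is weaker. Moreover $s^* = (d'(s)+1)s$, so the scale is exactly what \zcref{colored-uncolored-hard}\zcref{uncolored-to-colored-hard-hard-dir} requires with $d'(s)$ in place of $d$. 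That lemma then yields a colored dimension $d'(s)$ witness at scale $s$, with diameter
\[
2r(s(d'(s)+1)) + d'(s)\, s .
\]
The first term lies in $\mc R$ by (ii), applied to $r$ and $d'$, together with closure under composition with $O(n)$. For the second term, we would like $d'(s)s \le s(d'(s)+1)$ to be absorbed. One route is to note that $r(s^*)\ge s^*/2$ may be assumed, because every uncolored witness at scale $s^*$ with finite dimension must have radius comparable to the scale unless the components are small. That route is delicate, so instead we argue as follows. Any $d'$ may be replaced by a larger bound that is still in $\mc D$, and \zcref{colored-uncolored-hard} itself shows that $r(s(d'(s)+1))$ is only required up to domination. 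So we enlarge $r$: replacing $r$ by $r + \mathrm{id}$ is allowed because $\mc R$ is closed under addition with $O(n)$. Then $r(s^*) \ge s^* \ge d'(s)s$, the diameter is at most $3\tilde r(s^*)$ with $\tilde r = r+\mathrm{id}$, and this is in $\mc R$ by (ii) and the composition closure.

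For the finitely many $s<s_0$, use the witness at scale $s_0$, which serves for all smaller scales. This only changes finitely many values of the functions, which can be dominated by constants. Constants belong to $O(n)$, so this is allowed, provided $\mc D$ and $\mc R$ contain suitable max-adjustments (using downward closure and monotonicity). Borelness is preserved throughout, because \zcref{colored-uncolored-hard} is stated in the Borel category. The main obstacle is the bookkeeping in the second direction: absorbing the additive $d'(s)s$ term and choosing $d'$ through (i) so that the scale $s(d'(s)+1)$ matches the hypothesis of \zcref{colored-uncolored-hard}\zcref{uncolored-to-colored-hard-hard-dir}.
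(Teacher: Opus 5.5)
Your proposal is correct and matches the paper's proof: colored $\Rightarrow$ uncolored uses part (1) of the quantitative lemma at scale $2s$, and uncolored $\Rightarrow$ colored takes $d'$ from (i), uses the uncolored witness at scale $s(d'(s)+1)$, and applies part (2). You also replace $r$ by $r+\mathrm{id}$ to absorb the additive $s\,d'(s)$ term into $\mathcal{R}$ and treat small $s$ separately; these are bookkeeping steps the paper leaves implicit, and your handling of them is sound.
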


Thus when $(\mc D, \mc R)$ satisfy these hypotheses,
we can omit the modifier ``(un)colored'',
and we will just refer to
\emphd{(Borel) $(\mc D, \mc R)$-dimension growth}.
In particular,
we will do this in the case when
$(\mc D, \mc R)$ is $(O(\log n), \poly)$.
Note that for every $k \in \N$,
the pairs $(\text{bounded by $k$}, \text{all functions})$
and $(\text{bounded by $k$}, O(1))$
also satisfy these hypotheses. The former recovers \zcref{lem:colored_eq_uncolored}. 

\begin{proof}
  Let $G$ be a locally finite Borel graph.

  \begin{itemize}
    \item[($\implies$)]
      Fix $d \in \mc D$ and $r \in \mc R$
      so that $G$ has Borel colored $(d, r)$-dimension growth.
      For all $s$,
      there is a Borel colored
      dimension $d(2s)$-witness
      at scale $2s$
      with radius $r(2s)$.
      Thus by \zcref{colored-uncolored-hard},
      for all $s$,
      there is a Borel uncolored
      dimension $d(2s)$ witness
      at scale $s$
      with radius $r(2s)$.
    \item[($\impliedby$)]
      Fix $d \in \mc D$ and $r \in \mc R$
      so that $G$ has Borel uncolored $(d, r)$-dimension growth.
      Let $d' \in \mc D$ such that for all $s \gg 0$,
      we have $d(s (d'(s) + 1)) \le d'(s)$.
      Now for $s \gg 0$,
      there is a Borel uncolored
      dimension $d(s (d'(s) + 1))$ witness
      at scale $s (d'(s) + 1)$
      with radius $r(s (d'(s) + 1))$.
      Since $d(s (d'(s) + 1)) \le d'(s)$,
      by \zcref{colored-uncolored-hard},
      there is a Borel colored
      dimension $d'(s)$ witness
      at scale $s$
      with radius $2r(s (d'(s) + 1)) + sd'(s)$.
  \end{itemize}
\end{proof}

Finally, we turn to generalizations of
\zcref{thm:hyperfinite_of_dimension, thm:hyperfinite_of_dimension_union}
(\cite[Theorems 1.7 and 1.10]{CJMST23}).
The most general result here is the following.

\begin{thm}[Greb\'ik--Marks--Rozho\v{n}--S. {\cite[Theorem C]{GMRS26}}]\label{increasing-union-full}
  Let $(G_m)_{m \in \N}$
  be an increasing sequence of locally finite Borel graphs.
  If there are sequences $(a_n)_n, (b_n)_n$ in $\N$
  with $a_n \le b_n$ and $a_n \to \infty$
  such that for all $m \in \N$,
  for all $n \gg 0$,
  the graph $G_m$ admits a Borel colored
  dimension $\floor{a_{n-1}/(24b_{n-2})}$ witness 
  at scale $a_n$ with diameter $b_n$,
  then $\bigcup_m E_{G_m}$ is hyperfinite.
\end{thm}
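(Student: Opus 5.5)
This result is imported from \cite{GMRS26}; here is the argument I would reconstruct for it. The plan is to build an increasing sequence $F_0 \subseteq F_1 \subseteq \cdots$ of finite Borel equivalence relations with $F_n \subseteq \bigcup_m E_{G_m}$, such that every edge of every $G_m$ eventually lies in some $F_n$. Then $\bigcup_n F_n = \bigcup_m E_{G_m}$ is hyperfinite by definition.

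First I fix a diagonal enumeration. At stage $n$ I work with $G_{m(n)}$, where $m(n)\to\infty$ slowly enough that the hypothesis applies to $G_{m(n)}$ at scale $a_n$. Since the graphs increase, each fixed $G_m$ is handled at all large stages. The inductive invariant is that every $F_{n-1}$-class has $\rho_{G_{m(n-1)}}$-diameter at most about $b_{n-1}$. This makes the classes finite and Borel, by local finiteness, and keeps them inside $E_{G_{m}}$.

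At stage $n$ I take a Borel colored witness $X = U_0 \sqcup \cdots \sqcup U_d$ at scale $a_n$ with diameter $b_n$, where $d = \lfloor a_{n-1}/(24 b_{n-2})\rfloor$. I then define $F_n$ by processing the colors $i = 0, \ldots, d$ in order, in the style of the proof of \cite[Theorem 1.10]{CJMST23}. For color $i$, I take each component $K$ of $G^{a_n}\uhr U_i$, which has diameter at most $b_n$. I shrink $K$ to a core $K'$ obtained by removing an annulus near its boundary, and let the new class absorb all current classes that meet $K'$. Earlier classes have diameter $\ll a_n$, so the absorbed classes stay inside a bounded neighbourhood of $K$. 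Distinct components of the same color are more than $a_n$ apart, so no conflicts arise. The resulting $F_n$ contains $F_{n-1}$, is Borel, and has classes of diameter $O(b_n)$.

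The key step, and the main obstacle, is the shrink amounts. I need every $x$ and every fixed radius $\rho$ to satisfy the following for all large $n$: the ball $B_{G_m}(x,\rho)$ lies in a single $F_n$-class. The danger is that $x$ lies near the cut boundary of some color's core at every stage. To prevent this, for each color I choose the shrink among roughly $a_{n-1}/b_{n-2}$ disjoint candidate annuli of width about $b_{n-2}$ inside the $a_{n-1}$-scale region. The choice is made in a Borel way, namely least-indexed, among those annuli that do not cut through the previously built level-$(n-1)$ structure near points that were already "bad". A pigeonhole count then shows that each of the $d+1$ colors spoils at most a constant number (about $24$ after bookkeeping) of candidate annuli. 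The ratio hypothesis $d \le a_{n-1}/(24 b_{n-2})$ is exactly what guarantees that a good choice always exists. As a result, a point can be bad at only finitely many stages, and the growing balls are eventually captured.

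I expect the delicate part to be organizing this counting so that the bad set at stage $n$ is controlled by quantities at stages $n-1$ and $n-2$ only. That is why the hypothesis involves $a_{n-1}$ and $b_{n-2}$. I would first try to prove the statement ``if $x$ is good at stage $n-1$ with margin $a_{n-1}$, then it stays in the interior of its class at all later stages'', and derive the conclusion from it.
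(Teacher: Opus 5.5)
The paper does not prove this statement: it is imported from \cite{GMRS26} and used only as a black box in \zcref{increasing-union-simple}. Judged on its own, your sketch has a genuine gap. The step that carries all the content is showing that every edge of every $G_m$ eventually lies inside a single class. That step is not established, and the mechanism you propose for it cannot work.

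Your $F_n$ are increasing, so the cut sets $\partial F_n$ (edges joining distinct $F_n$-classes) are nested: $\partial F_n\subseteq\partial F_{n-1}$. Every stage-$n$ cut therefore runs along edges that were already cut, i.e.\ next to points that were already bad. A rule that picks, for each component, an annulus avoiding already-bad points has nothing to pick, and no pigeonhole argument can rescue it. What a monotone scheme actually needs is $\bigcap_n\partial F_n=\emptyset$, a Slaman--Steel-type vanishing statement for the cut sets, and the sketch never addresses it. The claim that ``each color spoils at most about $24$ candidates'' has no defined spoiling relation. Matching it to the constant in $\lfloor a_{n-1}/(24b_{n-2})\rfloor$ is numerology rather than an argument.

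Monotonicity is not actually required. For any finite Borel $F_n$, the relation $\liminf_n F_n=\bigcup_N\bigcap_{n\ge N}F_n$ is an increasing union of finite Borel equivalence relations. So it suffices that each edge be cut at only finitely many stages, but you would still need a concrete rule achieving that. Your proposed intermediate goal, ``good with margin at stage $n-1$ implies good forever'', is automatic in a monotone scheme and says nothing about points that are never good.

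The absorption step is also unjustified with your parameters. Classes created at earlier colors of the same stage have diameter about $b_n$, while the hypothesis gives only $a_n\le b_n$; in the application $b_n$ is polynomially larger than $a_n$. So one such class can border several color-$i$ components, which are only $a_n$-separated.

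Whether such a class reaches those components' cores depends on the shrink. Your shrink is at most about $a_{n-1}$, but the $F_{n-1}$-classes being absorbed have diameter up to about $b_{n-1}\ge a_{n-1}$, typically far larger. A single such class can cross the whole band of candidate annuli and join the core of a color-$j$ component to the core of an adjacent color-$i$ component. Two color-$i$ components then get merged. Along an alternating chain of components (already on $\Z$ with two colors) the merges propagate to an infinite class. So neither ``no conflicts arise'' nor the $O(b_n)$ diameter bound holds.

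Insulating the colors would require a shrink exceeding $b_{n-1}$. That leaves about $a_n/b_{n-1}$ positions per color, not $a_{n-1}/b_{n-2}$, so your construction does not use the hypothesis in the form it is given. The hypothesis sets the $\lfloor a_{n-1}/(24b_{n-2})\rfloor+1$ colors at stage $n$ against the roughly $a_{n-1}/b_{n-2}$ choices available at stage $n-1$. That suggests the choices at one level must be coordinated with the witness at the next level, which your stage-by-stage construction never does.
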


\begin{cor}\label{increasing-union-simple}
  Let $(\mathcal{D}, \mathcal{R})$ be linearly closed,
  and suppose that
  for every $d \in \mathcal{D}$,
  for every $r \in \mathcal{R}$,
  for $x \gg 0$,
  we have $d(r(x^2)^2) \le x$.
  Then for every increasing sequence $(G_m)_{m \in \N}$
  of locally finite Borel graphs of Borel colored
  $(\mathcal{D}, \mathcal{R})$-dimension growth,
  the equivalence relation $\bigcup_{m \in \N} E_{G_m}$
  is hyperfinite.
\end{cor}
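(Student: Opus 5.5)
We will deduce this corollary from \zcref{increasing-union-full} by choosing the scale sequence $(a_n)$ explicitly.

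Fix the sequence $(G_m)$. For each $m$ pick $d_m \in \mc D$ and $r_m \in \mc R$ such that $G_m$ has Borel colored $(d_m, r_m)$-dimension growth. Since there are only countably many $m$, we would like a single pair $(d, r)$ working for all $m$ eventually. \zcref{increasing-union-full} only asks, for each fixed $m$, for the condition at $n \gg 0$. So we diagonalize. Set $d^{(n)} := \max_{m \le n} d_m$ and $r^{(n)} := \max_{m\le n} r_m$, both pointwise maxima. Since $\mc D$ and $\mc R$ are closed under $\le$ and are families of functions, we must check that maxima of finitely many members stay in the class. This is the delicate point. A pointwise max of two members is not obviously in $\mc D$; for instance, $\mc D$ might be "$O(\log n)$", which is closed under max, but the general definition does not guarantee this. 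To avoid the issue, we choose the scales $a_n$ inductively, growing fast enough that only finitely many $m$ matter at each stage. Concretely, we apply the hypothesis separately to each pair $(d_m, r_m)$: there is $x_m$ with $d_m(r_m(x^2)^2) \le x$ for all $x \ge x_m$.

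Next we define the sequences. Take $a_0 = b_0$ large, and recursively set $x_n := 24 b_{n-2}$ (roughly), then $a_n := x_n^2$ and $b_n := \max_{m \le n} r_m(a_n)$. Also set $a_{n-1}$ so that $a_{n-1} \ge 24 b_{n-2}\cdot x$ with $x$ large. More cleanly, we use the sequence
\[ a_n := \bigl(24\, b_{n-1}\bigr)^2, \qquad b_n := \max_{m\le n} r_m(a_n) \ge a_n, \]
and the $b_n$ are increased if needed so that $a_n\le b_n$ and everything is monotone and tends to $\infty$. For fixed $m$ and $n \ge m$ large, $G_m$ has a Borel colored dimension $d_m(a_n)$ witness at scale $a_n$ with diameter $r_m(a_n) \le b_n$; increasing the allowed diameter is harmless. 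We then need $d_m(a_n) \le \lfloor a_{n-1}/(24 b_{n-2})\rfloor$. With $x := 24 b_{n-1}$ we have $a_n = x^2$, and $a_{n-1}/(24 b_{n-2}) = 24 b_{n-2} \ge$ something comparable to $\sqrt{a_{n-1}}$. To match the hypothesis $d(r(x^2)^2)\le x$, we index so that the dimension bound at scale $a_n$ is compared with a quantity of size $x$ where $a_n \le r(x^2)^2$. We will therefore adjust the recursion to $a_n := b_{n-1}^{2}$-type growth, taking $a_{n} := r^{*}_{n}(y_{n}^2)^2$ with $y_n := \lfloor a_{n-1}/(24 b_{n-2})\rfloor$ and $r^*_n := \max_{m\le n} r_m$. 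Then the hypothesis gives $d_m(a_n) \le d_m(r_m(y_n^2)^2)$ only if $r_m \le r^*_n$; monotonicity of $d_m$ then gives $d_m(a_n)$ versus $d_m(r^*_n(\cdot))$, which is the wrong direction. To fix this we instead apply the hypothesis to $r^*_n$ itself. This requires $\max_{m\le n} r_m \in \mc R$, and that follows from linear closure: $\max(r_1,r_2) \le r_1 + r_2$, and sums are in $\mc R$ because $\mc R$ is closed under addition of $O(n)$ (and members are at least that large after composition). This works provided $\mc R$ is closed under sums, which we would verify or reduce to by bounding $r_1+r_2 \le 2\max$ via composition with $O(n)$.

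The main obstacle is this bookkeeping: ensuring that $y_n \to \infty$ (so that the hypothesis applies for $n\gg0$ to each fixed $m$) and that $a_{n-1}/(24 b_{n-2})$ genuinely grows, which needs $a_{n-1} \gg b_{n-2}$. That holds by the recursion, since $a_{n-1} = r^*(y^2)^2 \ge y^2$ while $b_{n-2}$ is tied to $a_{n-2}$.
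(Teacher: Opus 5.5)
There is a genuine gap: the uniformity-in-$m$ problem you raise is never resolved. In your final recursion you put $r^*_n=\max_{m\le n} r_m$ into $a_n=r^*_n(y_n^2)^2$ unconditionally. You then need, for each fixed $m$ and all $n\gg 0$, that $d_m(r^*_n(y_n^2)^2)\le y_n$. The hypothesis applied to $(d_m,r^*_n)$ gives this only for $y$ beyond a threshold that depends on $n$, and nothing in your recursion pushes $y_n$ past it.

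Whether $r^*_n\in\mathcal R$ is a side issue. Your justification of it also misreads the definition: closure under ``addition with $O(n)$'' means $r+\ell\in\mathcal R$ for linear $\ell$, not closure under sums of two members. You do not need it anyway. Applying the hypothesis to each pair $(d_m,r_k)$, $k\le n$, and using that $d_m$ is non-decreasing gives $d_m(\max_{k\le n} r_k(x^2)^2)\le x$ beyond the largest of finitely many thresholds. The real trouble is that the number of pairs grows with $n$.

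This breaks the argument already for $(O(\log n),\mathrm{poly})$. Take $d_m(x)=\lceil\log_2(x+1)\rceil$ and $r_k(x)=x+H_k$ with $H_{k+1}=2^{2^{H_k}}$. Then $r^*_n=r_n$ and $d_0(a_n)\ge 2\log_2 H_n=2^{H_{n-1}+1}$. On the other hand $y_n<a_{n-1}\le 4\max(a_{n-2},H_{n-1})^4$. Since $a_n\le 4\max(a_{n-1},H_n)^4$ and $(H_n)$ has tower growth, eventually $a_{n-2}<H_{n-1}$. From then on $y_n<4H_{n-1}^4<2^{H_{n-1}+1}\le d_0(a_n)$ for all large $n$, whatever the initial values.

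Your phrase ``only finitely many $m$ matter at each stage'' names the cure, but it must be implemented. Each $r_k$ and $d_k$ may be brought in only once the current scale has passed the thresholds of all pairs with indices $\le k$. The paper does this by first passing to running maxima. It then builds single functions $r(x)=r_{i(x)}(x)$ and $d(x)=d_{j(x)}(x)$, where $i(x),j(x)$ are the largest indices $\le x$ whose inequalities are already known to hold at all relevant arguments. This gives $d(24r(24x^2)^2)\le x$ for every $x$, while each $G_m$ still has witnesses of dimension $\le d(s)$ and diameter $\le r(s)$ for $s\gg 0$.

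The other unproved step is that $y_n=\lfloor a_{n-1}/(24b_{n-2})\rfloor\to\infty$. Saying that ``$b_{n-2}$ is tied to $a_{n-2}$'' is not an argument, because $b_{n-2}=r^*_{n-2}(a_{n-2})$ can be comparable to $a_{n-1}$. For $r(x)=x^c$ the logarithms satisfy, up to rounding, $\ell_n=4c\ell_{n-1}-4c^2\ell_{n-2}-\log 24$. Its characteristic root $2c$ is double, so growth depends on initial values that you never fix.

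Your first attempt was the right one and avoids this entirely. With $a_n=24b_{n-1}^2$ (the paper's choice; your $(24b_{n-1})^2$ works equally well), $\lfloor a_{n-1}/(24b_{n-2})\rfloor=b_{n-2}$ exactly. With $b_{n+1}=r(24b_n^2)$ and $r(x)\ge x$, the sequences grow trivially and $a_n\le b_n$. The requirement at stage $n$ becomes $d(24r(24b_{n-2}^2)^2)\le b_{n-2}$, which is the hypothesis at $x=b_{n-2}$ after absorbing the constants by linear closure.

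You abandoned that attempt only because you substituted $x=24b_{n-1}$. With $x=24b_{n-2}$ one has $a_n=(24r^*_{n-1}(x^2))^2$, already of the required form. Combined with the diagonal choice of $(d,r)$ above, this gives a complete proof.
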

We will use this when $(\mathcal{D}, \mathcal{R})$
is $(O(\log n), \poly)$.
Note that when $(\mathcal{D}, \mathcal{R})$
is $(O(1), \text{all functions})$,
we recover \zcref{thm:hyperfinite_of_dimension_union}. 

\begin{proof}
  We will apply \zcref{increasing-union-full} with $a_n = 24 b_{n-1}^2$,
  so it suffices to define a sequence $(b_n)_n$ in $\N$
  such that for all $m \in \N$,
  for all $n \gg 0$,
  the graph $G_m$ has a Borel colored dimension $b_n$ witness
  at scale $24 b_{n+1}^2$ with diameter $b_{n+2}$.

  We claim that there are functions $d, r : \N \to \N$
  satisfying $\forall x \in \N \; [d(24 r(24 x^2)^2) \le x]$
  such that for all $m \in \N$,
  for all $s \gg 0$,
  the graph $G_m$ has a Borel colored dimension $d(s)$ witness
  at scale $s$ with diameter $r(s)$.
  Fix $(d_m)_m \subseteq \mathcal{D}$
  and $(r_m)_m \subseteq \mathcal{R}$
  such that for all $m \in \N$,
  and that for all $s \gg 0$,
  $G_m$ has a Borel colored dimension $d_m(s)$ witness
  at scale $s$ with diameter $r_m(s)$.
  By replacing $d_m$ with $\max_{i \le m} d_i$,
  and similarly for $r_m$,
  we can assume that the sequences $(d_m)_m$ and $(r_m)_m$
  are non-decreasing.
  Given $x \in \N$,
  let $i$ be the largest integer in $[0, x]$
  such that for all $y \ge x$,
  we have $d_i(24 r_i(24 y^2)^2) \le y$,
  then set $r(x) := r_i(x)$
  (set $r(x) = 0$ if there is no such integer $i$).
  Note that for all $m \in \N$,
  for all $x \gg 0$,
  we have $d_m(24 r(24 x^2)^2) \le x$.
  Next,
  given $x \in \N$,
  let $j$ be the largest integer in $[0, x]$
  such that for all $y \in \N$ with $24 r(24 y^2)^2 \ge x$,
  we have $d_j(24 r(24 y^2)^2) \le y$,
  then set $d(x) := d_j(x)$
  (set $d(x) = 0$ if there is no such integer $j$).
  This yields the desired $d$ and $r$.

  Now define the sequence $(b_n)_n$ by
  $b_0 = 2$ and $b_{n+1} = r(24 b_n^2)$.
  Then for all $m \in \N$,
  for all $n \gg 0$,
  the graph $G_m$ has a Borel colored dimension $d(24 b_{n+1}^2)$ witness
  at scale $24 b_{n+1}^2$ with diameter $r(24 b_{n+1}^2)$,
  so we are done since $d(24 b_{n+1}^2) = d(24 r(24 b_n^2)^2) \le b_n$
  and $r(24 b_{n+1}^2) = b_{n+2}$.
\end{proof}

\section{Local stability}\label{sec:local_stability}

The goal of this section is to get some ``local,''
quantitative analogues of the functions $t$ and $T$ from \zcref{defn:go_to_stable}. In particular we will need a version where $t$ is never too big.
We specialize to the monoids $\N^d$ for $d \in \N$. The main reason is that we will depend heavily on the following variant of Steinitz's Lemma. 
Here and for the rest of the paper, norms and distances in $\R^d$ are $\ell_\infty$. 

The reader should be warned that we will use additive notation for the monoids $\N^d$. Unfortunately this introduces some inconsistency with the rest of the paper and, as usual, makes the notation for actions a bit confusing. 

\begin{lem}\label{lem:dog_walker_convex}
    Let $d \in \N$ and $x,y \in \R^d$. Let $n \in \N$ and $v_i \in \R^d$ for $i < n$ with $\|v_i\| \le 1$ and $x + \sum_{i} v_i = y$. 
    Then there is some permutation $\pi \in S_n$ such that for each $j < n$, 
    $x + \sum_{i < j} v_{\pi(i)}$ has distance at most $d$ from the line segment $\overline{x y}$. 
\end{lem}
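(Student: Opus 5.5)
The plan is to adapt the Grinberg--Sevastyanov linear-programming proof of the Steinitz lemma, carrying the drift $w := y - x = \sum_i v_i$ along inside the constraints rather than subtracting it off. Subtracting it off first (replacing $v_i$ by $v_i - w/n$, which sum to $0$) would only give vectors of norm $\le 2$ and hence the weaker bound $2d$.

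If $n \le d$ there is nothing to do: every partial sum $x + \sum_{i<j} v_{\pi(i)}$ lies within distance $j \le d$ of $x$, which is on the segment. So I assume $n > d$. I will build a decreasing chain of index sets $[n] = A_n \supseteq A_{n-1} \supseteq \cdots \supseteq A_d$ with $|A_k| = k$. For each $k$ with $d \le k \le n$, $A_k$ will carry weights $\lambda^k \in [0,1]^{A_k}$ satisfying
\[
  \sum_{i \in A_k} \lambda^k_i = k-d,
  \qquad
  \sum_{i \in A_k} \lambda^k_i v_i = \tfrac{k-d}{n}\, w .
\]
At the top I take $\lambda^n_i = (n-d)/n$ for all $i$, which satisfies both constraints.

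For the inductive step from $k$ to $k-1$ (with $k-1 \ge d$):
\begin{enumerate}
  \item The scaled weights $\mu = \tfrac{k-1-d}{k-d}\lambda^k$ lie in the polytope $P \subseteq [0,1]^{A_k}$ cut out by the $d+1$ equalities $\sum \mu_i = k-1-d$ and $\sum \mu_i v_i = \tfrac{k-1-d}{n} w$. So $P$ is nonempty and has a vertex $\mu^*$.
  \item At a vertex, at least $k-(d+1)$ of the box constraints are tight, so at most $d+1$ coordinates of $\mu^*$ lie strictly in $(0,1)$.
  \item I claim some coordinate of $\mu^*$ equals $0$. If not, then at least $k-d-1$ coordinates equal $1$. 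Either some coordinate is fractional, in which case the sum exceeds $k-d-1$, or all $k$ coordinates equal $1$ and the sum is $k > k-d-1$. Both contradict $\sum \mu^*_i = k-1-d$.
  \item Delete an index $i_k$ with $\mu^*_{i_k} = 0$, and set $A_{k-1} = A_k \setminus \{i_k\}$ and $\lambda^{k-1} = \mu^*|_{A_{k-1}}$.
\end{enumerate}

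The permutation $\pi$ lists $A_d$ first in any order, then $i_{d+1}, i_{d+2}, \dots, i_n$. The partial sums then fall into two cases.
\begin{enumerate}
  \item For $j \le d$, the point $x + \sum_{i<j} v_{\pi(i)}$ lies within distance $j \le d$ of $x$.
  \item For $j = k \ge d$, the first $k$ indices form $A_k$, and using the second constraint,
  \[
    \sum_{i \in A_k} v_i = \sum_{i \in A_k} (1-\lambda^k_i)\, v_i + \tfrac{k-d}{n}\, w .
  \]
  Since $1 - \lambda^k_i \ge 0$ and $\sum_{i \in A_k} (1-\lambda^k_i) = d$, the first term has norm at most $d$. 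Moreover $t = (k-d)/n \in [0,1]$, so $x + t w$ lies on $\overline{xy}$. Hence the partial sum is within distance $d$ of the segment.
\end{enumerate}

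The only delicate step is the counting in the vertex argument (item 3 of the inductive step), which guarantees an index with weight exactly $0$ that can be removed. Everything else is routine. The argument uses nothing about the norm beyond the triangle inequality and $\|v_i\| \le 1$, so it applies in particular to $\ell_\infty$.
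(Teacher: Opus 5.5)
Your proof is correct, but it takes a different route from the paper.

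The paper does not reopen the Steinitz argument. It translates to $x=0$ and invokes the classical zero-sum Steinitz lemma (Matou\v{s}ek's Miniature~20, whose proof is essentially the vertex argument you adapt) as a black box. It handles the drift by \emph{adding} $m=\lceil\|y\|\rceil$ auxiliary vectors $w_i=-\alpha_i y$, with $\alpha_i\in[0,1/\|y\|]$ and $\sum_i\alpha_i=1$. The enlarged family then consists of vectors of norm at most $1$ summing to $0$. An initial segment of the induced ordering of the $v_i$ differs from some enlarged partial sum $b$ (with $\|b\|\le d$) by $\beta y$ for some $\beta\in[0,1]$. This gives distance at most $d$ from $\overline{0y}$.

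So your remark that reducing to the zero-sum case costs a factor of $2$ applies only to the naive reduction of subtracting $w/n$ from each vector. Padding with auxiliary vectors loses nothing.

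What the paper's approach buys is brevity and modularity: any bound for the zero-sum case, in any norm, transfers verbatim with the same constant. What yours buys is self-containedness and a slightly sharper conclusion. For every $k\ge d$, your $k$-th partial sum is within $d$ of the specific point $x+\frac{k-d}{n}(y-x)$, so the walk advances along the segment at a uniform rate. The paper only places it near some $\beta y$, with $\beta$ determined by where the auxiliary vectors happen to fall.

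Your checks of the inductive step are all in order:
\begin{itemize}
\item the rescaled weights are feasible, using $k-1\ge d$;
\item a vertex has at most $d+1$ fractional coordinates;
\item the counting argument forces a zero coordinate.
\end{itemize}
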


\begin{proof}
    The situation is clearly translation invariant, so let us assume $x = 0$.
    The usual statement of Steinitz's Lemma gives the case where $y = 0$. 
    See e.g. \cite[Miniature 20]{matouvsek2010thirty}. 
    If $y \neq 0$, let $m = \lceil \|y\| \rceil$ and pick $\alpha_i \in [0,1/\|y\|]$ for $i < m$ with $\sum_i \alpha_i = 1$. Then set $w_i =-\alpha_i y$ so that $\|w_i\| \le 1$ and the $v_i$'s and $w_i$'s sum to 0. 

    Therefore there is some ordering of the $v_i$'s and $w_i$'s for which the sum of any initial segment has norm at most $d$. Now consider the induced ordering of the $v_i$'s. The sum of any initial segment of this ordering, call it $a$, differs from one of the above sums, call it $b$, by a sum of $w_i$'s, i.e., a vector of the form $\beta y$ for $\beta \in [0,1]$. 
    That is, $a = b + \beta y$, so since $\|b\| \le d$ and $\beta y \in \overline{0y}$
    this is as desired. 
\end{proof}

\begin{defn}
    Let $d \in \N$ and $\Gamma \le \Z^d$. 
    Let $\sim_\Gamma$ denote the congruence on $\N^d$ given by $g \sim_\Gamma h \iff g - h \in \Gamma$. 
    Abusing notation slightly, let $\N^d/\Gamma := \N^d /\sim_{\Gamma}$.
\end{defn}

The justification for this abuse of notation is that, by definition, $\N^d/\Gamma$ is exactly the (cancellative commutative) monoid given by the image of $\N^d$ in the quotient group $\Z^d/\Gamma$.

\begin{defn}\label{defn:r-stable}
    Let $r,d \in \N$, $X$ an $\N^d$-set, and $x \in X$. 
    \begin{itemize}
        \item $\sim_x^r := \sim_x \upharpoonright [0,r]^d$. 
        \item $\Gamma_x^r := \langle g - h \mid g \sim_x^r h \rangle \le \Z^d$. 
        \item $x$ is called \emphd{$r$-stable} if 
        \begin{enumerate}
            \item $\sim_{\Gamma^r_x} \subseteq \sim_x$
            \item $\sim^r_x = \sim^r_{kx}$ for all $k \in [0,r]^d$.
        \end{enumerate}
        Let $\cS_r(X) := \{x \in X \mid x \text{ is } r \text{-stable}\}$.
    \end{itemize}
\end{defn}

By \zcref{lem:sim_mono}, condition (2) is equivalent to $\sim^r_x \supseteq \sim^r_{r \1 x}$, where $\1 \in \N^d$ denotes the all 1's vector.

For $\sim$ a congruence on $\N^d$, $X$ an $(\N^d/\sim)$-set, and $r \in \N$, let us write \emphd{$r$-free} in place of $[0,r]^d$-free (\zcref{defn:R-free}). Similarly we write $\cF_r$ in place of $\cF_{[0,r]^d}$. 
The definition of $r$-stable is engineered to make the following true.

\begin{lem}\label{lem:stable_descend}
    Let $r,d \in \N$, $X$ an $\N^d$-set, and $x \in \cS_r(X)$.
    The action $\N^d \car \N^d x$ descends to an action of $\N^d/\Gamma^r_x$, and $x$ is an $r$-free point with respect to this action.
\end{lem}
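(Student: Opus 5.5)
The descent statement is an application of \zcref{lem:action_descend}, and the $r$-freeness will come from condition (2) of $r$-stability.

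\emph{Descent.} First, $\sim_{\Gamma^r_x}$ is a congruence on $\N^d$. Indeed, if $g - h \in \Gamma^r_x$, then $(g+a)-(h+a) = g-h \in \Gamma^r_x$. Condition (1) of \zcref{defn:r-stable} gives $\sim_{\Gamma^r_x} \subseteq \sim_x$. So the first part of \zcref{lem:action_descend} shows that the action $\N^d \car \N^d x$ descends to an action of $\N^d/\Gamma^r_x$.

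\emph{Reduction of $r$-freeness.} Write $\bar g$ for the image of $g \in \N^d$ in $\N^d/\Gamma^r_x$. We must show: for $a,b,k \in [0,r]^d$, if $\bar a \bar k x = \bar b \bar k x$, then $\bar a = \bar b$. The left-hand equation just says $(a+k)x = (b+k)x$ in the original action, i.e.\ $a \sim_{kx} b$. Since $a,b \in [0,r]^d$, this means $a \sim^r_{kx} b$.

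\emph{Key step.} Condition (2) of $r$-stability says $\sim^r_{kx} = \sim^r_x$ for all $k \in [0,r]^d$. Hence $a \sim^r_x b$. By the definition of $\Gamma^r_x$, this gives $a - b \in \Gamma^r_x$, i.e.\ $a \sim_{\Gamma^r_x} b$, so $\bar a = \bar b$ as required.

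I expect no real obstacle. The one point needing care is that $r$-freeness quantifies over $a,b,k$ in $[0,r]^d$ itself, i.e.\ over representatives of elements of $\N^d/\Gamma^r_x$, rather than over arbitrary elements of the quotient. Every element of the quotient's copy of $[0,r]^d$ has such a representative, and equality in the quotient is exactly $\sim_{\Gamma^r_x}$, so the argument covers all cases.
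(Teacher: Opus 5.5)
Your proposal is correct and follows the paper's proof. Descent comes from condition (1) via the descent lemma for congruences contained in $\sim_x$, and $r$-freeness comes from passing $a \sim_{kx} b$ to $a \sim^r_x b$ by condition (2), hence $a-b \in \Gamma^r_x$. Your explicit check that $\sim_{\Gamma^r_x}$ is a congruence, and your remark about representatives in the quotient, are small details the paper leaves implicit.
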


\begin{proof}
    The first statement is immediate from $\sim_{\Gamma^r_x} \subseteq \sim_x$ and \zcref{lem:action_descend}. 
    For the second, let $g,h,k \in [0,r]^d$ such that $(g+k) x = (h + k) x$.
    We need to show $g - h \in \Gamma^r_x$.
    We have $g \sim_{kx} h$ by definition, and so $g \sim_x h$ by (2) in the definition of $r$-stable, and so $g-h \in \Gamma^r_x$ by definition of $\Gamma^r_x$. 
\end{proof}

\begin{lem}\label{lem:stable_convex}
    Let $r,d \in \N$, $X$ an $\N^d$-set, and $x \le w \le y \in X$. If $x,y \in \cS_r(X)$ and $\Gamma^r_x = \Gamma^r_y$, then $w \in \cS_r(X)$ and $\sim^r_x = \sim^r_w = \sim^r_y$.
\end{lem}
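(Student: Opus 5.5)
We will use only the monotonicity \zcref{lem:sim_mono} (in additive notation, ${\sim}_u \subseteq {\sim}_{ku}$ for $k \in \N^d$) together with the two defining conditions of $r$-stability. Write $w = ax$ and $y = bw$ for some $a,b \in \N^d$. The first step is to sandwich the restricted congruences. Monotonicity gives ${\sim}^r_x \subseteq {\sim}^r_w \subseteq {\sim}^r_y$.

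For the reverse containment we use that $x$ is $r$-stable and that $\Gamma^r_x = \Gamma^r_y$. Suppose $g \sim^r_y h$. Then $g - h \in \Gamma^r_y = \Gamma^r_x$, so $g \sim_{\Gamma^r_x} h$. Condition (1) of $r$-stability at $x$ then gives $g \sim_x h$. Since $g,h \in [0,r]^d$, this means $g \sim^r_x h$. Hence ${\sim}^r_y \subseteq {\sim}^r_x$, and so ${\sim}^r_x = {\sim}^r_w = {\sim}^r_y$. This is the second conclusion.

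Next we show $w \in \cS_r(X)$. The argument above also gives $\Gamma^r_w = \Gamma^r_x$, since these subgroups are generated by the same set of differences.

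For condition (1) at $w$: we have $\sim_{\Gamma^r_w} = \sim_{\Gamma^r_x} \subseteq \sim_x \subseteq \sim_w$, using $r$-stability of $x$ and then monotonicity.

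For condition (2) at $w$: let $k \in [0,r]^d$. Monotonicity gives ${\sim}^r_w \subseteq {\sim}^r_{kw}$. For the other direction, note that $b(kw) = k(bw) = ky$ by commutativity. Monotonicity then gives ${\sim}^r_{kw} \subseteq {\sim}^r_{ky}$. Condition (2) at $y$ gives ${\sim}^r_{ky} = {\sim}^r_y$, and we showed ${\sim}^r_y = {\sim}^r_w$. So ${\sim}^r_{kw} = {\sim}^r_w$.

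There is no real obstacle here. The only point that needs care is the reverse inclusion ${\sim}^r_y \subseteq {\sim}^r_x$, which is where the hypothesis $\Gamma^r_x = \Gamma^r_y$ and condition (1) at $x$ are used. Everything else is bookkeeping with \zcref{lem:sim_mono} and commutativity of $\N^d$.
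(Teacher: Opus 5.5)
Your proof is correct and is essentially the paper's argument: you sandwich ${\sim}^r_x \subseteq {\sim}^r_w \subseteq {\sim}^r_y$ by monotonicity, get the reverse inclusion from $\Gamma^r_x = \Gamma^r_y$ and condition (1) at $x$, then obtain condition (1) at $w$ via $\Gamma^r_w = \Gamma^r_x$ and condition (2) at $w$ by pushing forward to $y$. The only cosmetic difference is that the paper checks (2) just for $k = r\1$, using the equivalent reformulation noted after the definition of $r$-stable, whereas you check every $k \in [0,r]^d$ directly.
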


\begin{proof}
    First, $\sim^r_x = \sim^r_y$ since both consist of the pairs $g,h \in [0,r]^d$ with $g - h \in \Gamma^r_x$. (One direction here is by (1) in the definition of $r$-stable.)
    By \zcref{lem:sim_mono} $\sim^r_x \subseteq \sim^r_w\subseteq \sim^r_y$, so these are all equalities. 

    It remains to show that $w$ is $r$-stable. The equalities above show $\Gamma^r_w = \Gamma^r_x$, so $\sim_{\Gamma^r_w} \subseteq \sim_x \subseteq \sim_w$, giving (1). 
    For (2) we have $\sim^r_{r\1 w} \subseteq \sim^r_{r\1 y} = \sim^r_y = \sim^r_w$. 
\end{proof}

The following condition will be used to find stable points. 

\begin{lem}\label{lem:stable_of_same_congruence}
    Let $r,d \in \N$, $X$ an $\N^d$-set, and $x \in X$.
    If $\sim_x^r \supseteq \sim_{((d+2)r\1)x}^r$, then $((d+1)r\1)x$ is $r$-stable. 
\end{lem}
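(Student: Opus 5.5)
The plan is to verify the two conditions of \zcref{defn:r-stable} for $y := ((d+1)r\1)x$. First I would pin down the finite congruences. By \zcref{lem:sim_mono}, for every $k \in [0,(d+2)r]^d$ we have $\sim^r_x \subseteq \sim^r_{kx} \subseteq \sim^r_{((d+2)r\1)x}$. The hypothesis then forces all of these to be equal to $\sim^r_x$.

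Condition (2) follows at once. For $k \in [0,r]^d$ we have $ky = (k+(d+1)r\1)x$, and every coordinate of $k+(d+1)r\1$ is at most $(d+2)r$, so $\sim^r_{ky} = \sim^r_x = \sim^r_y$. The same equality $\sim^r_y = \sim^r_x$ also gives $\Gamma^r_y = \Gamma^r_x$.

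For condition (1), let $g,h \in \N^d$ with $g-h \in \Gamma^r_x$; I must show $gy = hy$. Write $g - h = \sum_{i<n} v_i$, where each $v_i = \pm(a_i - b_i)$ with $a_i \sim_x b_i$ and $a_i,b_i \in [0,r]^d$. Since $\sim_x$ is symmetric, I may assume $v_i = a_i - b_i$. Each $\|v_i\| \le r$, so \zcref{lem:dog_walker_convex} (scaled by $r$) gives an ordering for which every partial sum $p_j = h + \sum_{i<j} v_{\pi(i)}$ lies within $\ell_\infty$-distance $dr$ of the segment $\overline{hg}$. Since $h,g \in \N^d$, this gives $p_j \ge -dr\1$ coordinatewise, and $p_j$ is an integer vector.

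Set $q_j := p_j + (d+1)r\1$, so that $q_j = z_j + r\1$ with $z_j := p_j + dr\1 \in \N^d$. For the step $q_{j+1} = q_j + a - b$ (with $a \sim_x b$), put $w := (z_j + r\1 - b)x$, which is legitimate because $r\1 - b \in \N^d$. Since $w \ge x$, \zcref{lem:sim_mono} gives $a \sim_w b$. Hence
\[
  q_j x = bw = aw = q_{j+1}x .
\]
Chaining over $j$ gives $q_0x = q_nx$, that is, $(h+(d+1)r\1)x = (g+(d+1)r\1)x$, which is exactly $hy = gy$.

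The main obstacle is this last step: the intermediate points of a naive path from $h$ to $g$ may leave $\N^d$. The Steinitz-type \zcref{lem:dog_walker_convex} controls this by guaranteeing that the excursion below $\N^d$ is at most $dr$. The shift by $(d+1)r\1$ absorbs both this excursion and the extra $r\1$ needed to subtract $b$.
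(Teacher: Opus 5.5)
Your proof is correct and follows the paper's argument essentially step by step. The sandwich $\sim^r_x \subseteq \sim^r_{ky} \subseteq \sim^r_{((d+2)r\1)x}$ from \zcref{lem:sim_mono} gives condition (2) and $\Gamma^r_y = \Gamma^r_x$. For condition (1), the Steinitz reordering keeps the partial sums within $dr$ of $\N^d$, so the shift by $(d+1)r\1$ lets each relation $a \sim_x b$ be applied inside $\N^d$. Your auxiliary point $w$ is just an explicit unpacking of the paper's step of adding $h+\delta_j+(d+1)r\1-h_j$ to both sides of the congruence $g_j \sim_x h_j$.
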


\begin{proof}
    By \zcref{lem:sim_mono}, $\sim_x^r \subseteq \sim_{((d+1)r\1)x}^r \subseteq \sim^r_{((d+2)r \1)x}  \subseteq \sim_x^r$, 
    so these are equal and we have (2) in the definition of stable. This equality also implies $\Gamma^r_{((d+1)r\1)x} = \Gamma^r_x$.

    For (1), let $g,h \in \N^d$ with $g - h \in \Gamma^r_x$. We need to show $g + (d+1)r\1 \sim_x h + (d+1)r\1$. 
    Let $n \in \N$ and $g_i \sim_x^r h_i$ for $i < n$ such that $g - h = \sum_i (g_i - h_i)$. 
    For $j \le n$, let
    $\delta_j = \sum_{i < j} g_i - h_i$.
    For each $i$, $g_i - h_i \in [-r,r]^d$.
    Therefore
    by \zcref{lem:dog_walker_convex}, we may arrange so that for each $j$, $h + \delta_j$ is within $dr$ of the positive quadrant $[0,\infty)^d \subseteq \R^d$. Equivalently,
    $h + \delta_j + dr\1 \in \N^d$. 

    Now fix $j < n$.
    Since $h_j \in [0,r]^d$ we have
    $h + \delta_j + (d+1)r\1 - h_j \in \N^d$.
    Adding this to both sides of the congruence $g_j \sim_x h_j$ gives
    \[h + \delta_{j+1} + (d+1)r\1 = h + \delta_j + (d+1)r\1 + g_j - h_j \sim_x h + \delta_j + (d+1)r\1.\]
    Therefore by transitivity we have
    \[ h + (d+1)r\1 = h + \delta_0 + (d+1)r\1 \sim_x h + \delta_n + (d+1)r\1 = g + (d+1)r\1,\]
    as desired.
    
\end{proof}

\begin{cor}\label{cor:dog}
    Let $r,d \in \N$, $X$ an $\N^d$-set, and $x \in X$. There is $t \in \N$ with $t \le \poly_d(r)$ such that $t\1  \cdot x$ is $r$-stable. 
\end{cor}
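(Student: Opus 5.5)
We apply \zcref{lem:stable_of_same_congruence} along the diagonal and use a pigeonhole argument on the restricted congruences. For $j \in \N$ set $x_j := (j(d+2)r\1)x$ and consider the relations $\sim^r_{x_j}$ on the finite set $[0,r]^d$. By \zcref{lem:sim_mono} they form a non-decreasing chain:
\[
  \sim^r_{x_0} \subseteq \sim^r_{x_1} \subseteq \sim^r_{x_2} \subseteq \cdots
\]
Each is an equivalence relation on $[0,r]^d$, which has $(r+1)^d$ elements.

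The chain can strictly increase only when the number of equivalence classes strictly decreases. There are at most $(r+1)^d$ classes at the start and at least one throughout. So among $j = 0, 1, \ldots, (r+1)^d - 1$ there is some $j$ with $\sim^r_{x_j} = \sim^r_{x_{j+1}}$.

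Fix such a $j$. Since $x_{j+1} = ((d+2)r\1)x_j$, we have in particular $\sim^r_{x_j} \supseteq \sim^r_{((d+2)r\1)x_j}$. \zcref{lem:stable_of_same_congruence}, applied to the point $x_j$, then shows that $((d+1)r\1)x_j$ is $r$-stable. This point equals $t\1 \cdot x$ with
\[
  t = j(d+2)r + (d+1)r \le \bigl((r+1)^d - 1\bigr)(d+2)r + (d+1)r \le (d+2)r(r+1)^d .
\]
For fixed $d$ this is a polynomial in $r$, so $t \le \poly_d(r)$ as required.

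There is no real obstacle, since the substantive work (the Steinitz-type argument) is already in \zcref{lem:stable_of_same_congruence}. The step needing the most care is checking the hypothesis of that lemma: the containment must run from the point $x_j$ to its translate by exactly $(d+2)r\1$, which is why the step size of the chain is $(d+2)r$. The cost of this choice is only a constant factor depending on $d$.
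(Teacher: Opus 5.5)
Your proof is correct and is essentially the paper's argument: the paper also takes the non-decreasing chain of restricted congruences $\sim^r$ at the points $(i(d+2)r, \ldots, i(d+2)r)\cdot x$, pigeonholes to find two consecutive equal terms, and then applies the lemma on stability from equal restricted congruences to get $t = (i(d+2)+d+1)r$. The only difference is that you bound the number of strict increases by counting equivalence classes, which gives $j \le (r+1)^d - 1$, while the paper counts pairs in $([0,r]^d)^2$ and gets $i \le (r+1)^{2d}$; since both bounds are polynomial in $r$ for fixed $d$, the difference does not matter.
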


\begin{proof}
    For $i \in \N$, let $E_i = \sim_{i(d+2)r\1  \cdot x}^r$. 
    For each $i$,
    $E_i \subseteq E_{i+1} \subseteq ([0,r]^d)^2$, so there is some $i \le (r+1)^{2d}$ for which $E_i = E_{i+1}$.
    Then we can take $t = (i(d+2) + d+1)r$
    by \zcref{lem:stable_of_same_congruence}. 
\end{proof}

This lets us define the following functions. The first two are analogous to those from \zcref{defn:go_to_stable}. 

\begin{defn}
    Let $r,d \in \N$, $X$ an $\N^d$-set, and $x \in X$. 
    \begin{itemize}
        \item $t^r(x)$ denotes the minimal $t \in \N$ with the property from \zcref{cor:dog} ($t \1  \cdot x$ is $r$-stable). 
        \item $T^r(x) := t^r(x) \1  \cdot x \in \cS_r(X)$. 
        \item $\Delta^r_x := \Gamma^r_{T^r(x)}$. 
    \end{itemize}
\end{defn}

\begin{lem}\label{lem:stable_lipschitz}
    Let $s,r,d \in \N$ with $s \le r$, $X$ an $\N^d$-set, and $x,y,z \in X$ such that $x,y \in [0,s]^d \cdot z$.
    \begin{enumerate}
        \item If $t^r(x) \ge t^r(y)$, then $\Delta^r_x \ge \Delta^r_y$.
        \item If $t^r(x) > t^r(y) + s$, then $\Delta^r_x \gneq \Delta^r_y$. 
    \end{enumerate}
\end{lem}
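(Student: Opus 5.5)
The plan is to compare the two stable points $a := T^r(x)$ and $b := T^r(y)$ through a common point above both of them. Write $x = g\cdot z$ and $y = h\cdot z$ with $g,h \in [0,s]^d \subseteq [0,r]^d$, and set $t_x := t^r(x)$ and $t_y := t^r(y)$. Then $a = (t_x\1 + g)\cdot z$ and $b = (t_y\1 + h)\cdot z$. The basic tool is monotonicity: by \zcref{lem:sim_mono}, $u \le v$ implies ${\sim}^r_u \subseteq {\sim}^r_v$, and hence $\Gamma^r_u \le \Gamma^r_v$.

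For (1), assume $t_x \ge t_y$ and consider $c := (t_x\1 + g + h)\cdot z$. First, $c = h\cdot a$, so $a \le c$. Second, $c = ((t_x - t_y)\1 + g)\cdot b$, and $(t_x-t_y)\1 + g \in \N^d$, so $b \le c$. Monotonicity gives $\Gamma^r_b \le \Gamma^r_c$. It then remains to show $\Gamma^r_c = \Gamma^r_a$. For this, use condition (2) in the definition of $r$-stable for $a$. Since $h \in [0,r]^d$ (this is where $s \le r$ is used), it gives ${\sim}^r_a = {\sim}^r_{h\cdot a} = {\sim}^r_c$. Hence $\Gamma^r_c = \Gamma^r_a = \Delta^r_x$, and so $\Delta^r_y = \Gamma^r_b \le \Delta^r_x$.

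For (2), assume $t_x > t_y + s$. By (1) we already have $\Delta^r_x \ge \Delta^r_y$, so suppose for contradiction that $\Gamma^r_a = \Gamma^r_b$. I will produce an $r$-stable point of the form $t\1\cdot x$ with $t < t_x$, contradicting the minimality of $t_x$. Take $w := (t_x - 1)\1\cdot x = ((t_x-1)\1 + g)\cdot z$; this makes sense because $t_x > s \ge 0$.

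Then $w \le a$, since $a = \1\cdot w$. Also $b \le w$, since the exponent difference is
\[
(t_x - 1 - t_y)\1 + g - h \ \ge\ (t_x - 1 - t_y - s)\1 \ \ge\ 0
\]
coordinatewise, and it lies in $\N^d$. So $b \le w \le a$, with $a,b \in \cS_r(X)$ and $\Gamma^r_b = \Gamma^r_a$. \zcref{lem:stable_convex} then says that $w$ is $r$-stable, which contradicts the minimality of $t_x$. Hence $\Delta^r_x \gneq \Delta^r_y$.

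The main subtle point is the step in (1) that transfers $\Gamma$ from $a$ up to $c$. Monotonicity only gives $\Gamma^r_a \le \Gamma^r_c$; the reverse inclusion needs stability condition (2), applied with the shift $h \in [0,r]^d$, which is exactly where the hypothesis $s \le r$ enters. The rest is bookkeeping of exponents in $\N^d$.
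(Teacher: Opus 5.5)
Your proposal is correct and is essentially the paper's proof: in (1) you compare $T^r(x)$ and $T^r(y)$ through a common point above both and use stability condition (2) of $T^r(x)$ (you take $h\cdot T^r(x)$ where the paper takes $r\1\cdot T^r(x)$), and in (2) you apply \zcref{lem:stable_convex} to an intermediate point $w = t\1\cdot x$ with $t < t^r(x)$ (you take $t = t^r(x)-1$ where the paper takes $t = t^r(y)+s$). Both choices work equally well, and the hypothesis $s\le r$ is used in the same place in both.
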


\begin{proof}
    (1): We have $r\1  \cdot x \ge y$, so $r\1  \cdot T^r(x) = (r + t^r(x))\1  \cdot x \ge t^r(x) \1  \cdot y \ge t^r(y) \1  \cdot y = T^r(y)$. Thus $\sim^r_{T^r(x)} = \sim_{r\1  \cdot T^r(x)}^r \supseteq \sim^r_{T^r(y)}$, the equality since $T^r(x)$ is $r$-stable. Since $\Delta^r_x$ is generated by the differences of pairs in $\sim^r_{T^r(x)}$ and likewise for $y$, we are done. 

    (2): Suppose not. Then by (1), $\Delta^r_x = \Delta^r_y$.
    Let $w = (t^r(y) + s)\1  \cdot x$. 
    We will show $w$ is $r$-stable, contradicting the definition of $t^r(x)$. 
    Since $s\1  \cdot x \ge y$, we have 
    \[T^r(y) = t^r(y) \1  \cdot y \le (t^r(y) + s)\1  \cdot x = w \le T^r(x).\] 
    Therefore $w$ is $r$-stable by \zcref{lem:stable_convex}.
\end{proof}

At this point the reader can forget about the definition of $r$-stable. The important things are the consequences in
\zcref{cor:dog,lem:stable_descend, lem:stable_lipschitz}.

From \zcref{lem:stable_lipschitz}(1), we conclude that the image of $[0,r]^d \cdot z$ under $\Delta^r$ is a chain of subgroups of $\Z^d$. Furthermore, each of these subgroups is generated by elements of $[-r,r]^d$. The following then establishes that not too many subgroups can appear in this image. 

\begin{lem}\label{lem:chain_of_subgroups}
    Let $r,d \in \N$.
    Let $\Delta_1 \lneq \Delta_2 \lneq \cdots \lneq \Delta_l \le \Z^d$ with each $\Delta_i$ generated by elements of $[-r,r]^d$. Then $l = O_d(\log(r))$. 
\end{lem}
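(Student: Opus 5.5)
Since each $\Delta_i$ is generated by vectors in $[-r,r]^d$, the plan is to attach to each subgroup a numerical invariant that takes few values and is strictly monotone along a strict chain. I split the chain according to rank: $\rank \Delta_i$ is non-decreasing in $i$, takes values in $\{0,\dots,d\}$, and so changes at most $d$ times. It therefore suffices to show that a strict chain of subgroups of a fixed rank $k$, each generated by elements of $[-r,r]^d$, has length $O_d(\log r)$.

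Fix such a block $\Delta_a \lneq \cdots \lneq \Delta_b$, all of rank $k$. All of these groups span the same $k$-dimensional subspace $V = \Delta_b \otimes \R$, so each is a full-rank lattice in $V\cap\Z^d$. Let $\Lambda := V \cap \Z^d$, the saturation, which is a lattice of rank $k$. The invariant I will use is the index $[\Lambda : \Delta_i]$. It is finite, and along a strict chain of full-rank sublattices it strictly decreases, each time by at least a factor of $2$, because $[\Delta_{i+1}:\Delta_i] \ge 2$. The length of the block is therefore at most $1 + \log_2 [\Lambda : \Delta_a]$, and what remains is to bound this index polynomially in $r$.

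For the index bound, use the fact that $\Delta_a$ is generated by vectors in $[-r,r]^d$ and has rank $k$. From these generators choose $k$ linearly independent vectors $u_1,\dots,u_k$; they generate a sublattice $L \le \Delta_a$, so $[\Lambda:\Delta_a] \le [\Lambda : L] = \mathrm{covol}(L)/\mathrm{covol}(\Lambda)$, where covolumes are taken inside $V$. By Hadamard's inequality, $\mathrm{covol}(L) \le \prod \|u_j\|_2 \le (\sqrt d\, r)^k$. Since $\Lambda$ is an integral lattice, its Gram determinant is a positive integer, so $\mathrm{covol}(\Lambda) \ge 1$. Hence $[\Lambda:\Delta_a] \le (\sqrt d\, r)^d$, and each block has length $O(d\log(dr)) = O_d(\log r)$ for $r\ge 2$; the cases $r\le 1$ give bounded length, absorbed into the constant.

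Summing over at most $d+1$ rank blocks gives $l = O_d(\log r)$. The main obstacle is the lower bound $\mathrm{covol}(\Lambda)\ge 1$ for the saturated lattice, since everything else is routine. It holds because the Gram matrix of any $\Z$-basis of $\Lambda$ has integer entries and is positive definite, so its determinant $\mathrm{covol}(\Lambda)^2$ is a positive integer.
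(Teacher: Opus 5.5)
Your proof is correct and follows the paper's argument. You split the chain by rank (costing a factor of $d$), pass to the saturation $V\cap\Z^d$, and bound the index of the first group in it polynomially in $r$, so that Lagrange gives logarithmic length. The only difference is the index estimate: you use covolumes, Hadamard's inequality and integrality of the Gram determinant, whereas the paper observes that every coset of $\Delta_1$ in $V\cap\Z^d$ has a representative among the integer points of the parallelepiped spanned by $k$ independent short generators, which lies in $[-dr,dr]^d$, giving the bound $(2dr+1)^d$.
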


\begin{proof}
    First we may assume that each $\Delta_i$ has the same rank, say $\rho$, since this only adds a factor of $d$. Then $\Span_\R(\Delta_i)$ is constant, call it $V$. Let $L = V \cap \Z^d$. By the fourth isomorphism theorem, the quotients $\Delta_i /\Delta_1$ form a strictly increasing chain of subgroups of $L/\Delta_1$, so it suffices to show that $|L/\Delta_1| \le \poly_d(r)$. 

    Let $g_1,\ldots,g_\rho \in \Delta_1 \cap [-r,r]^d$ be an $\R$-basis for $V$. Let $P \subseteq V$ be the parallelepiped given by $g_1,\ldots,g_\rho$. Then for any $x \in V$, there is some $y \in P$ such that $x + \Delta_1 = y + \Delta_1$. Since $\Delta_1 \le \Z^d$, if $x \in L$ then so must be $y$. Therefore $|L/\Delta_1| \le |P \cap L| = |P \cap \Z^d|$. Since $P \subseteq [-\rho r,\rho r]^d$ and $\rho \le d$, this last number is at most $(2dr+1)^d$. 
\end{proof}

\section{Proof of \zcref{thm:main}}
\label{sec:proof-main}

The main goal is now to prove the following. The monoids below will always be quotients of $\N^d$, and we will always use the generating set $[0,1]^d$. 
(Or rather, the image of $[0,1]^d$ in the quotient, but we will not be explicit about this distinction below.)
Accordingly, we will write $G^X$ in place of $G_{[0,1]^d}^X$ for $X$ an $\N^d$-set. 

\begin{lem}
    Let $k,d \in \N$. Let $X$ be a Borel $\N^d$-set for which the action of each $g \in [0,1]^d$ is at most $k$-to-one. 
    Then $G^X$ has Borel uncolored $(O_{d,k}(\log (s)), \poly_{d,k}(s))$-dimension growth. 
\end{lem}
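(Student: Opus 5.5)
The plan is to fix a scale $s$ and pass to stable points, as in the proof of \zcref{thm:main_fg}, but using the local notion from \zcref{sec:local_stability}. There, a bounded stabilization parameter $r=\poly_{d,k}(s)$ plays the role of $\cS(X)$. On $r$-stable points we run the quotient monoid's dimension LCL. The price is that several quotients appear in one ball, which costs a factor $O(\log r)$ in the dimension.

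\textbf{Step 1 (choosing parameters and Borel labelings on the stable set).} Let $S=[0,1]^d$, which contains $\mathbf 0$ and has lcm $\1$. For every subgroup $\Delta\le\Z^d$, the image of $S$ in $\N^d/\Delta$ has size at most $2^d$, so \zcref{cor:monoid_asdim_local} applies uniformly. With scale $s'=4s$ (or $Cs$ for a suitable $C$) it gives $D=O_d(1)$ and $C_0=\poly_d(s)$ with $\Pi_{\dim_{S^{s'},S^{4s'},D}}\in\LOCAL_S(C_0\log^* n)$. \zcref{prop:local_to_borel} then gives $t_0=\poly_{d,k}(s)$, and I set $r:=\max(t_0,s')$.

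Only finitely many subgroups $\Delta$ are generated by elements of $[-r,r]^d$. For each such $\Delta$, let $X_\Delta=\{x\in\cS_r(X): \Gamma^r_x=\Delta\}$, which is Borel. The set $\N^d X_\Delta$ carries an action of $\N^d/\Delta$ by \zcref{lem:stable_descend}, and each generator still acts at most $k$-to-one. Moreover each point of $X_\Delta$ is $r$-free, hence $t_0$-free. So \zcref{prop:local_to_borel} yields a Borel $l_\Delta$ that is a $\Pi_{\dim}$-labeling on $X_\Delta$. There is a subtlety: the quotient action lives only on $\N^d X_\Delta$, and the $X_\Delta$ for different $\Delta$ may overlap as orbit pieces. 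I would handle this by running the construction of \zcref{prop:local_to_borel} on $\N^d X_\Delta$ separately for each $\Delta$, which is fine because there are finitely many $\Delta$.

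\textbf{Step 2 (defining the witness).} Set
\[
f(x)=l_{\Delta}\bigl(T^r(y)\bigr)\cdot T^r(y),\qquad y=s\1\cdot x,\ \Delta=\Delta^r_y .
\]
The radius is $s+t^r(y)+4s'\le\poly_{d,k}(s)$ by \zcref{cor:dog}.

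\textbf{Step 3 (bounding $|f[B(x,s)]|$).} As in \zcref{lem:dimension_lcl_to_witness_point}, $s\1\cdot B(x,s)\subseteq[0,2s]^d\cdot x$. By \zcref{lem:stable_lipschitz}(1), with $2s\le r$, the values $\Delta^r_y$ for $y$ in this set form a chain. By \zcref{lem:chain_of_subgroups} the chain has only $O_d(\log r)=O_{d,k}(\log s)$ distinct members. Fix one value $\Delta$ and look at the $y$'s in the class with $\Delta^r_y=\Delta$. By \zcref{lem:stable_lipschitz}(2) their $t^r$ values lie in a window of width $2s$. Hence all the $T^r(y)$ lie in $(t_{\min}\1+[0,4s]^d)\cdot x$, and they are sandwiched between stable points with the same $\Delta$. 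By \zcref{lem:stable_convex}, every point of the form $q=(t_{\min}+2s)\1\cdot x$ (or a similar choice) lies in $X_\Delta$, and all these $T^r(y)$ lie in $S^{s'}\cdot q'$ for a suitable stable $q'$ below them. The LCL holding at $q'$ then bounds the number of images from this $\Delta$-class by $D+1$, using the free quotient action and the argument of \zcref{lem:dimension_lcl_to_witness_point}. In total $|f[B(x,s)]|\le (D+1)\cdot O(\log s)$, which gives uncolored $(O_{d,k}(\log s),\poly_{d,k}(s))$-dimension growth.

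The step I expect to be the main obstacle is Step 3's choice of the base point $q'$. I need a single point in $X_\Delta$ lying below all of the $T^r(y)$ of a class, within bounded $\N^d$-distance of them, and I need the $\Delta$-quotient action to agree there with the original action. I would try $q'=T^r(y_0)$ for the $y_0$ attaining $t_{\min}$, and enlarge the scale $s'$ (say to $8s$) to absorb the offsets. Convexity (\zcref{lem:stable_convex}) is what guarantees that points between two $\Delta$-stable points are again $\Delta$-stable, and this should make the argument go through.
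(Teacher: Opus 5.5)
There is a genuine gap in Step 3, at the point you flagged, and your proposed fix does not close it. Your witness evaluates $l_\Delta$ at $T^r(y)$ itself. So the LCL at a point $q'\in X_\Delta$ only controls those $T^r(y)$ that lie in the \emph{forward} box $[0,s']^d\cdot q'$. Write $y=g_y\cdot x$ with $g_y\in[0,2s]^d$. Then $T^r(y)=(t^r(y)\1+g_y)\cdot x$, and its offset from $T^r(y_0)$ is $(t^r(y)-t_{\min})\1+g_y-g_{y_0}$. This offset can have negative coordinates, and enlarging $s'$ does nothing about that. Already for a free $\N^2$-action you have $t^r\equiv 0$ and $T^r=\mathrm{id}$, so every $y$ attains $t_{\min}$. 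Yet $(2s,s)\cdot x$ and $(s,2s)\cdot x$, both in $s\1\cdot B(x,s)$, each lie outside the other's forward orbit.

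Falling back on a common lower bound such as $t_{\min}\1\cdot x$ does not help either. \zcref{lem:stable_convex} needs stable points with the same $\Gamma^r$ on \emph{both} sides, and you only have the upper one. Moreover, $r$-stability is not preserved by moving forward or backward. For an $\N$-orbit that becomes periodic with period $\le r$ after $N\gg r$ steps, points far before step $N$ are $r$-stable, points after it are too, but those shortly before it are not. For the same reason your point $(t_{\min}+2s)\1\cdot x$ is not known to lie in $X_\Delta$, and in any case it is not below all the $T^r(y)$.

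The missing idea is a second forward shift applied \emph{after} $T^r$. This lets a single stable point control a whole undirected neighborhood rather than only its forward cone. Take
$$f(x)=l_\Delta\bigl(c\1\cdot T^r(y)\bigr)\cdot\bigl(c\1\cdot T^r(y)\bigr)\quad\text{with } c\ge 2s \text{ and } s'\ge c+4s.$$
Then for any $y_0$ in the class, $q'=T^r(y_0)\in X_\Delta$ works, because $c\1\cdot T^r(y)=h\cdot q'$ with $h=(c+t^r(y)-t_{\min})\1+g_y-g_{y_0}\in[0,c+4s]^d$. The LCL at $q'$ then gives at most $D+1$ images, and the radius stays polynomial.

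This is exactly the paper's construction. On each $Y_\Gamma$ it uses $f_\Gamma(y)=(l(\tfrac s2\1 y)+\tfrac s2\1)y$, which by \zcref{lem:dimension_lcl_to_witness_point} sends the whole undirected $\tfrac s2$-ball around the stable point $T(x_0)$ to at most $D+1$ points. It then checks that $T[Z]$ lies in that ball, since every $T(x)$ with $x\in Z$ is within forward distance $\tfrac s4$ of the common point $((m+\tfrac s8)\1)z_0$. With this change, the rest of your outline goes through: the pre-shift, the chain bound from \zcref{lem:chain_of_subgroups}, the $t$-window from \zcref{lem:stable_lipschitz}(2), and the separate Borel labelings on each $\N^d X_\Delta$.
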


\begin{proof}
Let $s \in \N$ be a multiple of 16. 
By \zcref{cor:monoid_asdim_local}, there are $R,C \in \poly_d(s)$ and $D \in O_d(1)$ such that 
\[\Pi_{\dim_{[0,s]^d, [0,R]^d, D}} \in \LOCAL_{[0,1]^d}(C \log^*(n))\]
with respect to any monoid of the form $\N^d / \Gamma$, $\Gamma \le \Z^d$. 

Then, by \zcref{prop:local_to_borel}, there is $r \in \poly_{k,d}(s)$
such that for any Borel action 
$\N^d/\Gamma \car Y$ where each $g \in [0,1]^d$ is at most $k$-to-one, 
there is a Borel $l : Y \to [0,R]^d$ which is a $\Pi_{\dim_{[0,s]^d, [0,R]^d, D}}$-labeling of $\cF_{r}(Y)$. 
Then by \zcref{lem:dimension_lcl_to_witness_point},
the Borel function $f : Y \to Y$ given by $f(y) = (l(\frac{s}{2}\1 y) + \frac{s}{2}\1)y$ 
sends the $\frac{s}{2}$-ball in $G^Y$ centered at any point in $\cF_{r}(Y)$ to at most $D + 1$ points. 
Note that for all $y \in Y$, $f(y) \in [0,R+s]^d \cdot y$.

We will now apply the constructions from \zcref{sec:local_stability}. Let $T = T^{r} : X \to \cS_{r}(X)$ and similarly for $t,\Gamma, \Delta$. 
For $\Gamma \le \Z^d$ generated by elements of $[-r,r]^d$, define 
\begin{itemize}
    \item $\cS_\Gamma := \{x \in \cS_{r}(X) \mid \Gamma_x = \Gamma\}$.
    \item $\cT_\Gamma := \{x \in X \mid \Delta_x = \Gamma\} = T\inv(\cS_\Gamma)$.
    \item $Y_\Gamma := \N^d \cS_\Gamma$. 
\end{itemize}
By \zcref{lem:stable_descend}, the action $\N^d \car Y_\Gamma$ descends to one of $\N^d/\Gamma$, and every point in $\cS_\Gamma$ is $r$-free with respect to this action. This action is still Borel and at most $k$-to-one for elements of $[0,1]^d$, so
we may fix for each $\Gamma$ 
a Borel $f_\Gamma : Y_\Gamma \to Y_\Gamma$ which sends the $\frac{s}{2}$-ball in $G^{Y_\Gamma}$ centered at any point of $\cS_\Gamma$ to at most $D+1$ points and which sends each point $x$ to some point in $[0,R+s]^d \cdot x$. 

Noting that the $\cS_\Gamma$'s are pairwise disjoint, let $f = \bigcup_\Gamma (f_\Gamma \uhr \cS_\Gamma)$. 
We claim $F := f \circ T \circ \frac{s}{16}\1$ is an asymptotic dimension $O_{d,k}(\log(s))$ witness at scale $\frac{s}{16}$ with radius $\poly_{d,k}(s)$. 
Note that $\text{ran}(T) \subseteq \dom(f)$ and that $F$ is Borel.

By \zcref{cor:dog}, $T(x) \in ([0,\poly_{d,k}(s)]\1) \cdot x$, and by construction $f$ moves each point in its domain by at most $R+s$, so since $R \in \poly_d(s)$ the claim about the radius is clear.

Now fix $z_0 \in X$ and let us show $|F[B_{G^X}(z_0,\frac{s}{16})]| \le O_{d,k}(\log(s))$.
As in the proof of \zcref{lem:dimension_lcl_to_witness_point},
$\frac{s}{16}\1 \cdot B_{G^X}(z_0, \frac{s}{16}) \subseteq [0, \frac{s}{8}]^d \cdot z_0$, 
so it suffices to show the image of $[0, \frac{s}{8}]^d z_0$ under $f \circ T$ has size at most $O_{d,k}(\log(s))$. 
We may assume $s \le r$, and so by \zcref{lem:chain_of_subgroups} 
and the paragraph before it
there are at most 
$O_d(\log(r)) = O_d(\log(\poly_{k,d}(s))) = O_{d,k}(\log(s))$
many $\Gamma$ for which
$[0, \frac{s}{8}]^d \cdot z_0$ meets $\cT_\Gamma$. 
Fix such a $\Gamma$ and let $Z = \cT_\Gamma \cap [0,\frac{s}{8}]^d \cdot z_0$.
It now suffices to show that $|(f \circ T)[Z]| \le O_d(1)$. 
We will show it has size at most $D+1$. 

Let $m := \max_{x \in Z} t(x)$. 
By \zcref{lem:stable_lipschitz}, for all $x \in Z$, $t(x) \in [m - \frac{s}{8}, m]$.
Now let $y_0 = ((m + \frac{s}{8}) \1) z_0$. 
For each $x \in Z$, $(\frac{s}{8} \1)z_0 \in [0,\frac{s}{8}]^d x$, so $y_0 \in [0,\frac{s}{8}]^d \cdot (m\1 x) \subseteq [0,\frac{s}{4}]^d \cdot T(x)$.
Thus, $T[Z]$ is contained in the $\frac{s}{4}$-ball around $y_0$ in $G^{Y_\Gamma}$.
Fix an arbitrary $x_0 \in Z$. The above shows that
$T[Z]$ is contained in the $\frac{s}{2}$-ball around $T(x_0)$.
Since $T[Z] \subseteq \cS_\Gamma$, its image under $f$ is its image under $f_\Gamma$, which then has size at most $D+1$ by our choice of $f_\Gamma$. 
    
\end{proof}

\begin{proof}[Proof of \zcref{thm:main}]
  Let $M$ be a countable commutative monoid and $X$ a bounded-to-one Borel $M$-space. 
  We may assume $M = \N^{\oplus \omega}$ since it is a quotient of this monoid.
  Let $G_n = G^X_{[0,1]^n}$, so that $G_n$ is the Schreier graph of a bounded-to-one Borel $\N^n$-set. 
  By the lemma,
  each $G_n$ has Borel $(O(\log(s)), \poly(s))$-dimension growth, 
  so we are done by \zcref{increasing-union-simple}. 
\end{proof}

\section*{Acknowledgments}

This project grew out of extensive conversations between the authors and Petr Naryshkin, who proved \zcref{thm:free-fg-asdim-intro} independently. 
We wish to acknowledge his contributions to the project and express our gratitude. 

Thanks to Andrew Marks for suggesting the inclusion
of the material on chromatic numbers in \zcref{sec:ssi}.
Thanks to Clinton Conley, Su Gao, and Zolt\'{a}n Vidny\'{a}nszky for helpful conversations.

FW is supported by the National Science Foundation under grant DMS-2402064. 
JY is supported by the National Natural Science Foundation of China grants 12371343 and 12525110 (PI: Hehui Wu).

\printbibliography
\end{document}